\crefname{hypothesis}{Hypothesis}{Hypotheses}
\newtheorem{thm}{Theorem}
\title{Variational Regularization Theory Based on Image Space Approximation Rates}
\author{Philip Miller\thanks{Institute for Numerical and Applied Mathematics, University of G\"ottingen
  (\email{p.miller@math.uni-goettingen.de}}).}
\DeclareMathOperator{\id}{Id}
\DeclareMathOperator*{\argmin}{argmin}
\DeclareMathOperator{\prox}{Prox} 
\DeclareMathOperator{\im}{im} 
\DeclareMathOperator{\dom}{dom} 
\newcommand{\inv}{^{-1}}             
\newcommand{\gobs}{g^\mathrm{obs}}
\newcommand{\nat}{\mathbb{N}_0}
\newcommand{\X}{\mathbb{X}}
\newcommand{\XA}{{\X_A}}
\newcommand{\XL}{{\X_L}}
\newcommand{\Y}{\mathbb{Y}}
\newcommand{\xh}{\hat{x}_\alpha}
\newcommand{\fh}{\hat{f}_\alpha}
\newcommand{\wav}{\mathcal{S}}
\newcommand{\bspace}[3]{b^{#1}_{{#2},{#3}}}
\newcommand{\Bspace}[3]{B^{#1}_{{#2},{#3}}(\Omega)}
\newcommand{\Bn}[4]{\| {#1} \|_{ B^{#2}_{{#3},{#4}}(\Omega)}}
\newcommand{\bn}[4]{\| {#1} \|_{{#2},{#3},{#4}}}
\newcommand{\lspace}[2]{\ell_{#1}^{#2}}
\newcommand{\lnorm}[3]{\| {#1} \|_{{#2},{#3}}}
\newcommand{\wspace}[3]{\ell_{{#1},{#2}}^{{#3},\infty}}
\newcommand{\wnorm}[4]{\| {#1} \|_{{#2},{#3},{#4}}}
\newcommand{\smax}{s_\textrm{max}}
\newcommand{\newww}[1]{#1}
\newcommand{\neww}[1]{#1}
\begin{document}
\maketitle
\begin{abstract}
We present a new approach to convergence rate results for variational regularization. Avoiding Bregman distances and using image space approximation rates as source conditions we prove a nearly minimax theorem showing that the modulus of continuity is an upper bound on the reconstruction error up to a constant. Applied to Besov space regularization we obtain convergence rate results for $0,2,q$- and $0,p,p$-penalties without restrictions on $p,q\in (1,\infty).$ Finally we prove equivalence of H\"older-type variational source conditions, bounds on the defect of the Tikhonov functional, and image space approximation rates. 
\end{abstract}
\begin{keywords}
 regularization, convergence rates, real interpolation, source conditions, converse results
\end{keywords}
\begin{AMS}
 47A52, 65J20, 65J22
\end{AMS}
\section{Introduction}
The subject of this paper are ill-posed equations $Ax=g$ \neww{ with $A$ a bounded linear operator mapping from a Banach space $\X$ to a Hilbert space $\Y$}. \neww{We analyze approximations of an unknown $x\in \X$ given noisy, indirect observations $g^\delta$ satisfying $\|g^\delta - Ax \|_\Y\leq \delta$ with a fixed noise level $\delta>0$.} \\ 
In this context ill-posedness means that the unknown $x$ does not depend continuously on the observations $g^\delta$. As a naive application of the inverse of $A$ may therefore amplify the noise indefinitely regularization is needed to compute stable approximations of the unknown. Here, 
we study variational regularization with a convex penalty $\mathcal{R}$ defined \neww{on $\X$}. 
More precisely, we consider the Tikhonov functional given by 
  \[T_\alpha(x,g):= \frac{1}{2 \alpha} \|g- Ax  \|_{\Y}^2  + \mathcal{R}(x) \quad\text{for } \alpha >0, x\in \dom(\mathcal{R}) \text{ and } g\in \Y \]
and denote its set of minimizers by
\begin{align*}
R_\alpha (g) &  := \argmin\nolimits_{x \in \dom(\mathcal{R})} \, T_\alpha(x,g) \subseteq \dom(\mathcal{R}).
\end{align*}
A central aim of regularization theory are upper bounds on the distance $L(x,\xh)$  between $x$ and estimators $\xh\in R_\alpha(g^\delta)$ with respect to some loss function $L$.
For ill-posed problems the convergence of $\xh$ to $x$ for $\delta\rightarrow 0$ can be arbitrarily slow in general. Therefore, upper bounds on the error require regularity conditions on the \neww{true solution} $x$, which are referred to as \emph{source conditions} in regularization theory. The name comes from the first such conditions in a Hilbert space setting, $x=(A^\ast A)^{\nu/2}\omega, \nu>0$, where $\omega$ is referred to as source generating $x$. This condition implies the convergence rate $\|x-\xh\|_\X= \mathcal{O}(\delta^\frac{\nu}{\nu+1})$ in the Hilbert space norm that defines the penalty. In \cite{Math2003} convergence rates in Hilbert scales are proven \neww{under source conditions of the form $x=\varphi(A^\ast A)\omega$ for more general functions $\varphi$}. Nevertheless, we restrict our attention to H\"older-type convergence rates in this paper. A generalization of the above source condition for $\nu=1$ to convex or Banach space penalties is given by \emph{source-wise representations} 
\begin{align}\label{eq:classical_sc}
A^\ast \omega \in \partial\mathcal{R}(x) \quad \text{for some } \omega\in \Y
\end{align} 
leading to the convergence rate $\mathcal{O}(\delta)$ in the Bregman divergence of $\mathcal{R}$ (see \cite{BO:04}). Slower rates of convergence in Banach space settings can be shown under \emph{variational source conditions} \cite{Scherzer_etal:09,Schuster2012} or under \emph{approximate source conditions} \cite{hein2008,hein2009}. We refer to \cite{flemming:12b} for a comparison of the latter two concepts. Recently in \cite{Grasmair2020} convergence rates are shown under the condition $(A^\ast A)^\nu \omega \in \partial\mathcal{R}(x)$ for convex penalties defined on Hilbert spaces. 
In \cite{Hofmann2019} upper bounds on $T_\alpha(x,Ax)-T_\alpha(x_\alpha,Ax)$ (\emph{defect of the Tikhonov functional}) in terms of $\alpha$ are used as a source condition.  \\
In this work we consider \emph{H\"older-type image space approximation rates}, i.e. bounds of the form   
\begin{align}\label{eq:image_space_approx_bound}
\|Ax-Ax_\alpha\|_\Y\leq c \alpha^\nu \quad\text{for all}\quad \alpha>0,\,  x_\alpha\in R_\alpha(Ax)
\end{align}
for some $\nu \in [0,\infty)$ and $c\geq 0$. These will play the role of source conditions.  
In many situations these kind of bounds can be proven under source conditions. (see e.g. \cite[Thm.~2.3]{HW:17}, \cite[Prop.~6]{Hofmann2019}). \\
We first prove a bound on $L(x,\xh)$ uniformly on the set of all $x$ satisfying \eqref{eq:image_space_approx_bound} in terms of the \emph{modulus of continuity}. 
One main advantage of our analysis is the flexibility in the choice of the loss function $L$. Then 
for penalties given by Banach space norm powers we work out a characterization of condition \eqref{eq:image_space_approx_bound} in terms of real interpolation spaces. This leads to convergence rate results with regularity conditions given by real interpolation spaces. As examples we consider weighted $\ell^p$-regularization, Besov space $0,p,p$- and $0,2,q$-regularization. Our approach seems to allow for the first time to obtain minimax optimal rates for all $p$ resp. $q$ in $(1,\infty).$
Finally we compare condition \eqref{eq:image_space_approx_bound} to source conditions used in the literature. We prove equivalence of \eqref{eq:image_space_approx_bound}, H\"older-type variational source conditions (used e.g. in \cite{Kindermann2016, F:18}) and H\"older-type bounds on the defect of the Tikhonov functional. \neww{In particular, this equivalence yields a \newww{characterization} of \eqref{eq:image_space_approx_bound} that does not directly depend on the minimizers $x_\alpha$.} \\ 
The structure of the paper is as follows: In Section 2 we present our main results. Section 3,4 and 5 are devoted for the proofs of the main results and establish some new techniques which may be of some independent interest in variational regularization theory. We finish with an outlook where we also discuss limitations of the present work.  
\section{Main results}
\neww{
To give an overview over the main results of this paper, we present and discuss the theorems in their precise mathematical form and refer to the proofs given in the sections 3, 4 and 5.} 
\subsection{Minimax convergence rates}\label{sec:main_minimax}
\begin{assumption}\label{ass:topological}
Let $\tau$ be a topology such that \neww{$(\X,\tau)$} is a locally convex Hausdorff space and $\mathcal{R}\colon \neww{\X} \rightarrow (-\infty,\infty]$ a proper, convex function. We assume that the sublevel set $\{ x\in \X \colon \mathcal{R}(x) \leq \lambda \}$ is $\tau$-compact for all $\lambda\in \mathbb{R}$. Note that this implies that $\mathcal{R}$ is lower semi-continuous on $(\X,\tau)$. \\
Let $\Y$ be a Hilbert space and $A\colon \neww{\X} \rightarrow \Y$ a linear, $\tau$-to-weak continuous operator.
\end{assumption}
\Cref{ass:topological} implies $\tau$-compactness of the sublevel sets of the Tikhonov functional. Using the finite intersection property of these sets one can show that $R_\alpha (g)$ is nonempty for all $g\in \Y$. Furthermore, for every $g\in \im(A)$ there exist a (possibly not unique) $\mathcal{R}$-minimal \neww{$x\in \X$} with $Ax=g$\neww{, i.e.\ $\mathcal{R}(x)\leq \mathcal{R}(z)$ for all $z\in \X$ with $Az=g$.}
Let $\nu\in [0,\infty)$ and $\varrho>0$. We define 
\begin{align}\label{eq:def_K}
\varrho_\nu \colon  \X  \rightarrow & [0,\infty]  \quad \text{by}\quad \varrho_\nu(x)= \sup \left\{ \alpha^{-\nu} \|Ax-Ax_\alpha\|_\Y \,\colon \alpha>0 ,\, x_\alpha\in R_\alpha(Ax)\right\},\\ \nonumber
 K_\nu ^\varrho & =\left\{ x\in \X \colon \varrho_\nu (x)\leq \varrho \right\} \quad\text{and}\quad  
 K_\nu := \left\{ x \in \X\, \colon \varrho_\nu(x) <\infty \right\}.
\end{align}
Note that $x\in K_\nu$ if and only if a bound \eqref{eq:image_space_approx_bound} holds true and  $\varrho_\nu(x)$ is the smallest possible constant $c>0$.\\
\neww{Let $L\colon \X\times \X \rightarrow [0,\infty]$  satisfy the triangle inequality. We use $L$ to measure the reconstruction error.}\\
The first and central result is a uniform bound in $K_\nu^\varrho$ on $L(x,\xh)$ with $\xh\in R_\alpha(g^\delta)$ in terms of the  \emph{modulus of continuity}. 
Recall that the latter is given by 
\begin{align} \label{eq:modulus}
\Omega(\delta,K) := \sup \left\{ L(x_1, x_2)  \, \colon x_1,x_2 \in K \text{ with  } \|Ax_1-A x_2\|_\Y\leq \delta\right\} 
\end{align} 
for a subset $K \subset \X$. \\
We consider two parameter choice rules for the regularization parameter $\alpha.$ An apriori rule requiring prior knowledge of the parameter $\nu$ in \eqref{eq:image_space_approx_bound} characterizing the regularity of the unknown $x$, and the discrepancy principle as most well-known a-posteriori rule. 
\begin{thm}\label{thm:minimax}
Let $\nu\in (0,1]$ and $\varrho, \alpha>0$. 
Suppose $x\in K_\nu^\varrho$. Let $\alpha>0$ and $\xh\in R_\alpha(g^\delta)$. 
\begin{remunerate}
\item (apriori rule) Let $c_r \geq c_l>0$. If $c_l\varrho^{-\frac{1}{\nu}} \delta^\frac{1}{\nu} \leq \alpha \leq c_r  \varrho^{-\frac{1}{\nu}}\delta^\frac{1}{\nu}$, then 
\[L(x,\xh)  \leq \Omega(c_1\delta, K_\nu^{c_2 \varrho}) \] with 
$c_1:= 1+c_r^\nu$ and $c_2:=2+c_l^{-\nu}$. 
\item (discrepancy principle) Let $C_D>c_D>1$. If $c_D\delta \leq \|g^\delta- A\xh \|_\Y\leq C_D\delta$, then 
\[L(x,\xh)  \leq \Omega(d_1 \delta, K_\nu^{d_2 \varrho}) \] with 
$d_1:= 1+C_D$ and $d_2:=2+(c_D-1)\inv$. 
\end{remunerate}
\end{thm}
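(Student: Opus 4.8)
The plan is to transport the whole problem into the image space $\Y$. The starting point is that, although a minimizer $x_\alpha\in R_\alpha(g)$ need not be unique, its image $Ax_\alpha$ is: strict convexity of $g\mapsto\|g-\cdot\|_\Y^2$ together with convexity of $\mathcal{R}$ forces $Ax_\alpha$ to agree for all minimizers, so $S_\alpha(g):=Ax_\alpha$ defines a map $\Y\to\Y$. Comparing the Tikhonov functional at $x_\alpha$ with its value along segments $t\mapsto(1-t)x_\alpha+tw$ and letting $t\to0$ yields the variational inequality $\mathcal{R}(w)-\mathcal{R}(x_\alpha)\ge\frac{1}{\alpha}\langle g-Ax_\alpha,Aw-Ax_\alpha\rangle_\Y$ for all $w\in\dom(\mathcal{R})$, which uses no adjoint and is the only consequence of minimality I would need. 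Writing it for two data $g_1,g_2$ and adding the two copies gives firm nonexpansiveness $\|S_\alpha(g_1)-S_\alpha(g_2)\|_\Y^2\le\langle g_1-g_2,S_\alpha(g_1)-S_\alpha(g_2)\rangle_\Y$; in particular both $S_\alpha$ and $\id-S_\alpha$ are nonexpansive. Pairing instead a minimizer for $g^\delta$ at level $\alpha$ with a minimizer for $A\xh$ at level $\beta$ gives the cross-level comparison $\|A\xh-Az\|_\Y\le\frac{\beta}{\alpha}\|g^\delta-A\xh\|_\Y$ for every $\beta>0$ and every $z\in R_\beta(A\xh)$.

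First I would prove the discrepancy estimate $\|Ax-A\xh\|_\Y\le c_1\delta$. Since $A\xh=S_\alpha(g^\delta)$ and $Ax_\alpha=S_\alpha(Ax)$, nonexpansiveness of $S_\alpha$ gives $\|Ax_\alpha-A\xh\|_\Y\le\|g^\delta-Ax\|_\Y\le\delta$, hence $\|Ax-A\xh\|_\Y\le\|Ax-Ax_\alpha\|_\Y+\delta\le\varrho\alpha^\nu+\delta$, and the upper a priori bound $\varrho\alpha^\nu\le c_r^\nu\delta$ gives $\|Ax-A\xh\|_\Y\le(1+c_r^\nu)\delta=c_1\delta$. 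Applying nonexpansiveness of $\id-S_\alpha$ to $g^\delta$ and $Ax$ gives the one-sided residual bound $\|g^\delta-A\xh\|_\Y\le\|Ax-Ax_\alpha\|_\Y+\delta\le\varrho\alpha^\nu+\delta$; saving a factor of two here, compared with a crude triangle inequality, is what eventually makes the constants sharp.

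Next comes the source estimate $\xh\in K_\nu^{c_2\varrho}$, i.e.\ $\varrho_\nu(\xh)\le c_2\varrho$, which I would establish by bounding $\|A\xh-Az\|_\Y$ for $z\in R_\beta(A\xh)$ in two regimes. For $\beta\le\alpha$ the cross-level comparison together with the residual bound gives $\|A\xh-Az\|_\Y\le\frac{\beta}{\alpha}(\varrho\alpha^\nu+\delta)$; inserting the lower a priori bound $\delta\le c_l^{-\nu}\varrho\alpha^\nu$ and $\beta\alpha^{\nu-1}\le\beta^\nu$ (valid as $\beta\le\alpha$ and $\nu\le1$) yields $\|A\xh-Az\|_\Y\le(1+c_l^{-\nu})\varrho\beta^\nu$. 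For $\beta\ge\alpha$, nonexpansiveness of $\id-S_\beta$ applied to $A\xh$ and $Ax$ gives $\|A\xh-Az\|_\Y\le\|Ax-Ax_\beta\|_\Y+\|Ax-A\xh\|_\Y\le\varrho\beta^\nu+(\varrho\alpha^\nu+\delta)$, which by $\alpha\le\beta$ and $\delta\le c_l^{-\nu}\varrho\alpha^\nu$ is at most $(2+c_l^{-\nu})\varrho\beta^\nu$. Taking the supremum over $\beta$ gives $\varrho_\nu(\xh)\le(2+c_l^{-\nu})\varrho=c_2\varrho$. Since $c_2\ge1$ we also have $x\in K_\nu^\varrho\subseteq K_\nu^{c_2\varrho}$, so $x,\xh\in K_\nu^{c_2\varrho}$ with $\|Ax-A\xh\|_\Y\le c_1\delta$, and the definition of the modulus yields $L(x,\xh)\le\Omega(c_1\delta,K_\nu^{c_2\varrho})$.

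For the discrepancy principle the same two steps run verbatim once the prescribed residual $c_D\delta\le\|g^\delta-A\xh\|_\Y\le C_D\delta$ replaces the a priori relations: the upper bound gives $\|Ax-A\xh\|_\Y\le\delta+C_D\delta=d_1\delta$, while combining the lower bound with $\|g^\delta-A\xh\|_\Y\le\varrho\alpha^\nu+\delta$ produces $\delta\le(c_D-1)^{-1}\varrho\alpha^\nu$, which plays the role of the lower a priori bound and yields $d_2=2+(c_D-1)^{-1}$. The routine parts are the variational inequality and the discrepancy estimate; the crux I expect is the source estimate for $\xh$, since $\xh$ minimizes for the noisy datum $g^\delta$ and not for its own image $A\xh$, so its image-space approximation rate is not available for free. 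Managing the crossover at $\beta=\alpha$ and, above all, using firm nonexpansiveness of $\id-S_\beta$ rather than a plain triangle inequality to avoid a spurious factor of two is what I anticipate to be decisive for obtaining the stated constants $c_2,d_2$ rather than merely qualitatively correct ones.
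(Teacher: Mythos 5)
Your proposal is correct and follows essentially the same route as the paper: the proof decomposes into (a) an image-space bound $\|Ax-A\xh\|_\Y\le c_1\delta$ obtained from firm nonexpansiveness of $g\mapsto A\xh$, and (b) a regularity bound $\varrho_\nu(\xh)\le c_2\varrho$ proved by splitting at $\beta=\alpha$, after which the definition of the modulus finishes the argument — exactly the paper's ``feasibility'' scheme. The only (harmless) difference is presentational: you rederive firm nonexpansiveness and the cross-level comparison $\|A\xh-Az\|_\Y\le\frac{\beta}{\alpha}\|g^\delta-A\xh\|_\Y$ directly from the elementary variational inequality, whereas the paper routes them through the identification $A\circ R_\alpha=\prox_{\alpha\mathcal{Q}}$ and the source-condition lemma $A^\ast\omega\in\partial\mathcal{R}(\xh)$ with $\omega=\frac{1}{\alpha}(g^\delta-A\xh)$.
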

\neww{The \hyperref[pr:thm_minimax_bound]{proof of Theorem 1}  can be found in \Cref{sec:minimaxity_on}.}
Under mild assumptions \Cref{thm:minimax} gives rise to an almost minimax result in the following manner. Recall that the worst case error of a reconstruction map $R\colon \Y\rightarrow \X$ on a set $K\subset \X$ is given by
\[ \Delta_R(\delta,K):= \sup \left\{ L\left( x ,R(g^\delta)\right))  \colon x\in K, g^\delta\in \Y \text{ with } \|g^\delta - Ax\|_\Y \leq \delta \right\}.  \]
and satisfies the lower bound 
 \begin{align}\label{eq:lower_bound_modulus}
\Delta_R(\delta,K) \geq \frac{1}{2} \Omega(2\delta,K)
\end{align} 
(see {\cite[Rem.~3.12]{EHN:96}}, {\cite[Lemma 3.11]{WSH:20}} or \cite[4.3.1. Prop.~1]{DJKP:95}).
Let $\overline{R}_\alpha\colon \Y \rightarrow \X$ satisfy $\overline{R}_\alpha(g^\delta)\in R_\alpha(g^\delta)$ for all $g^\delta\in \Y$ with either $\alpha=\alpha(\delta)$ satisfying the  apriori parameter choice given in \Cref{thm:minimax}.1. or $\alpha=\alpha(\delta,g^\delta)$ satisfying the discrepancy principle in  \Cref{thm:minimax}.2. 
In the case $\Omega(\delta,K_\nu^\varrho)\sim \varrho^{e}\delta^{f} $ for some exponents $e,f>0$ this yields a minimax result 
\[ \Delta_{\overline{R}_\alpha}(\delta,K_\nu^\varrho) \leq  C \inf\nolimits_{R} \Delta_R(\delta,K_\nu^\varrho).\] 
This shows that up to a constant $C$ no method can achieve a better approximation uniformly on $K_\nu^\varrho.$ \\
Moreover, we would like to highlight the flexibility in the choice of \neww{ the loss function $L$}. Many recent works in Banach space or convex regularization theory are restricted to error bounds in the Bregman divergence (see e.g. \cite{Kindermann2016}, \cite{Hofmann2019}, \cite{F:18}, \cite{Werner2012}). In some situations the meaning of the Bregman divergence is unclear and lower bounds on the Bregman distance are required to obtain more tangible statements. In \cite{WSH:20} these lower bounds cause a restriction on the parameters $s,p,q$ of the Besov scale. By applying \Cref{thm:minimax} to Besov space regularization we can overcome these restrictions.
\subsection{Convergence rate theory for Banach space regularization}\label{sec:main_results_banach}
\neww{Here we consider $\mathcal{R}\colon \X \rightarrow [0,\infty)$ given by 
$\mathcal{R}(x)=\frac{1}{u}\|x\|_\X^u$ for fixed $u\in [1,\infty)$.
We assume $\XA$ to be a Banach space with a continuous, dense embedding $\X\subset \XA$ such that $A$ extends to a norm isomorphism $
A\colon \XA\rightarrow \Y$, i.e. there exists a constant $M\geq 1$ such that
\begin{align} \label{eq:XAequiA}
\frac{1}{M} \|x\|_\XA \leq \|Ax\|_\Y \leq M \|x\|_\XA \quad\text{for all} \quad x\in \XA.
\end{align} 
Note that injectivity is necessary for \eqref{eq:XAequiA}. On the other hand injectivity of $A\colon \X\rightarrow \Y$ suffices for the existence of a space $\XA$ such that \eqref{eq:XAequiA} holds with $M=1$. (Take the Banach completion of $\X$ in the norm $x\mapsto \|Ax\|_\Y$).}\\
\neww{For example, in Besov space settings we will assume $\XA$ a space with negative smoothness index, and we consider spaces $\X$ with smoothness index $0$.\\
Moreover we need the following assumption on \newww{$K_1$ and $\varrho_1$ defined in \eqref{eq:def_K}}.
Recall that a quasi-norm satisfies the properties of norm except that the triangle inequality is replaced by $\|x+y\|\leq c \left(\|x\|+\|y\| \right)$ for a constant $c>0$. A complete and quasi-normed vector space is called a quasi-Banach space.}
\begin{assumption} \label{ass:K_1_quasi_banach}
Let $u\in (0,\infty)$.
\newww{Suppose $K_1$ is a vector space and that there is a quasi-norm $\|\cdot \|_\textrm{lin}$ on $K_1$ such that $(K_1,\|\cdot \|_\textrm{lin})$ is a quasi-Banach space. Moreover assume}
\[\frac{1}{M} \varrho_1 (x) \leq \|x \|_\textrm{lin} ^{u-1} \leq M \varrho_1(x) \quad\text{for all } x\in K_1.\] 
\end{assumption} 
This assumption is motivated by the computation of $K_1$ for the examples below.\\ 
\neww{
Recall that for a quasi-Banach space $\X_S$ with a continuous embedding $\X_S\subset \XA$ and $\theta\in (0,1)$ the real interpolation space $(\XA,\X_S)_{\theta,\infty}$ consists of all $x\in \XA$ such that 
\[ \|x\|_{(\XA,\X_S)_{\theta,\infty}} := 
\sup\nolimits_{t>0} t^{-\theta} K(x,t) <\infty . \]  
Here the $K$-functional is given by 
\[  K(x,t):= \inf\nolimits_{z\in \X_S} \left(\|x-z\|_{\X_A} + t \|z\|_{\X_S} \right) .\] }
For the definition of the real interpolation spaces $(\XA,\X_S)_{\theta,q}$ for $q\in (0,\infty)$ we refer to \cite{Bergh1976}. 
\begin{thm}[error bounds] \label{thm:error_bounds}
Suppose \eqref{eq:XAequiA} and \Cref{ass:K_1_quasi_banach} hold true. \neww{If $\X$ is not reflexive, suppose \Cref{ass:topological}} holds true. \\ 
\neww{Let $\XL$ be a Banach space with a continuous embedding $\XL\subset \XA.$}
Let $0<\xi< \theta <1$ and $\delta,\varrho,\alpha>0$ and $c_r\geq c_l >0$, $C_D>c_D>1$. Suppose there there is a continuous embedding $\left(\XA, K_1 \right)_{\xi,1}\subset \XL$. Assume \[ x\in \left(\XA, K_1 \right)_{\theta,\infty} \quad \text{with}\quad  \|x\|_{\left(\XA, K_1 \right)_{\theta,\infty}}\leq \varrho. \] Let $\xh\in R_\alpha(g^\delta)$. There exists a constant $C>0$ independent of $x,\delta$ and $\varrho$ such that whenever $\alpha$ satisfies either 
\[ c_l \varrho^{-\frac{u-1}{\theta}} \delta^\frac{(1-\theta)(u-1)+\theta}{\theta} \leq \alpha \leq c_r \varrho^{-\frac{u-1}{\theta}} \delta^\frac{(1-\theta)(u-1)+\theta}{\theta}  \quad\text{or} \quad 
 c_D\delta\leq \|g^\delta- A\xh \|_\Y \leq  C_D \delta \] 
the bound 
\[ \|x-\xh \|_{\XL} \leq C \varrho^\frac{\xi}{\theta} \delta^{1-\frac{\xi}{\theta}} \] 
holds true.
\end{thm}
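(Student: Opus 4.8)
The plan is to reduce the statement to \Cref{thm:minimax} applied with the loss $L(x_1,x_2):=\snorm{x_1-x_2}_{\XL}$, which satisfies the triangle inequality, and with the image-space index
\[ \nu:=\frac{\theta}{(1-\theta)(u-1)+\theta}\in(0,1]. \]
(If $\X$ is reflexive, \Cref{ass:topological} holds for the weak topology since $\mathcal{R}=\frac1u\snorm{\cdot}_\X^u$ is convex and coercive with weakly compact sublevel sets, so the setting of \Cref{thm:minimax} is available in either case.) The one ingredient not yet at hand is a dictionary between $\varrho_\nu$ from \eqref{eq:def_K} and the interpolation scale. Concretely, I would first prove the two-sided equivalence
\[ \frac1{C_0}\,\snorm{x}_{(\XA,K_1)_{\theta,\infty}}^{\,s}\le\varrho_\nu(x)\le C_0\,\snorm{x}_{(\XA,K_1)_{\theta,\infty}}^{\,s},\qquad s:=\frac{u-1}{(1-\theta)(u-1)+\theta}, \]
so that, up to constants, $K_\nu^{\varrho}$ is the ball of radius $\varrho^{1/s}$ in $(\XA,K_1)_{\theta,\infty}$. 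The exponents are forced: $\varrho_\nu$ is positively homogeneous of degree $1+\nu(u-2)=s$, and the endpoints of the scale are pinned down by $\varrho_0(x)=\snorm{Ax}_\Y\sim\snorm{x}_\XA$ (from \eqref{eq:XAequiA}, letting $\alpha\to\infty$, since optimality forces $\snorm{Ax-Ax_\alpha}_\Y\le\snorm{Ax}_\Y$ with equality in the limit) and by $\varrho_1(x)\sim\snorm{x}_{\mathrm{lin}}^{\,u-1}$ (\Cref{ass:K_1_quasi_banach}), i.e.\ $K_0=\XA$ and $K_1$ itself.

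The main obstacle is this equivalence. I would work with the value function $W_x(\alpha):=\alpha\min_{z}T_\alpha(z,Ax)=\min_z\big(\frac12\snorm{A(x-z)}_\Y^2+\frac\alpha u\snorm{z}_\X^u\big)$, which is concave in $\alpha$ with $W_x'(\alpha)=\frac1u\snorm{x_\alpha}_\X^u$ and $\frac12\snorm{Ax-Ax_\alpha}_\Y^2=W_x(\alpha)-\alpha W_x'(\alpha)$; hence $\varrho_\nu(x)=\sup_\alpha\alpha^{-\nu}\snorm{Ax-Ax_\alpha}_\Y$ is governed by the decay of $W_x$. Using \eqref{eq:XAequiA} to replace $\snorm{A(x-z)}_\Y$ by $\snorm{x-z}_\XA$ (up to $M$) and \Cref{ass:K_1_quasi_banach} to pass to $\snorm{z}_{\mathrm{lin}}$, I would compare $W_x$ with the ordinary $K$-functional $K(x,t)=\inf_{z\in K_1}(\snorm{x-z}_\XA+t\snorm{z}_{\mathrm{lin}})$ and establish $\varrho_\nu(x)\sim\sup_{t>0}t^{-\theta s}K(x,t)^{s}=\snorm{x}_{(\XA,K_1)_{\theta,\infty}}^{\,s}$ after the reparametrisation linking $\alpha$ and $t$. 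The delicate point is that the penalty enters with the power $u$ while \Cref{ass:K_1_quasi_banach} only controls the homogeneity-$(u-1)$ quantity $\varrho_1$; reconciling the two powers, through the near-optimal splitting $x=(x-z_t)+z_t$ with $\snorm{x-z_t}_\XA\lesssim t^\theta\varrho$, $\snorm{z_t}_{\mathrm{lin}}\lesssim t^{\theta-1}\varrho$ and the homogeneity computation above, is where the real work sits.

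Granting the equivalence, the rest is bookkeeping. From $\snorm{x}_{(\XA,K_1)_{\theta,\infty}}\le\varrho$ the upper bound gives $x\in K_\nu^{C_0\varrho^{s}}$. Since $s/\nu=(u-1)/\theta$ and $1/\nu=\big((1-\theta)(u-1)+\theta\big)/\theta$, the window $(C_0\varrho^{s})^{-1/\nu}\delta^{1/\nu}\sim\varrho^{-(u-1)/\theta}\delta^{((1-\theta)(u-1)+\theta)/\theta}$ coincides, after rescaling $c_l,c_r$ by the harmless factor $C_0^{1/\nu}$, with the apriori window in the hypothesis, while the discrepancy window is literally the one in \Cref{thm:minimax}.2. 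Applying \Cref{thm:minimax} (with its radius equal to $C_0\varrho^s$) therefore yields in both cases
\[ \snorm{x-\xh}_\XL=L(x,\xh)\le\Omega\big(c_1\delta,\,K_\nu^{c_2 C_0\varrho^{s}}\big) \]
with the constants $c_1,c_2$ (resp.\ $d_1,d_2$) of that theorem. It remains to bound this modulus. For any $x_1,x_2\in K_\nu^{c_2C_0\varrho^{s}}$ with $\snorm{A(x_1-x_2)}_\Y\le c_1\delta$, set $w:=x_1-x_2$; the lower bound of the equivalence gives $\snorm{x_i}_{(\XA,K_1)_{\theta,\infty}}\lesssim\varrho$, hence $\snorm{w}_{(\XA,K_1)_{\theta,\infty}}\lesssim\varrho$ by the (quasi-)triangle inequality, while \eqref{eq:XAequiA} gives $\snorm{w}_\XA\le M c_1\delta$. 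Estimating $K(w,t)\le\min\big(\snorm{w}_\XA,\,t^\theta\snorm{w}_{(\XA,K_1)_{\theta,\infty}}\big)$ and integrating $t^{-\xi}K(w,t)\,dt/t$ over $t>0$ (convergent because $\xi<\theta$) yields the interpolation inequality
\[ \snorm{w}_{(\XA,K_1)_{\xi,1}}\le C(\xi,\theta)\,\snorm{w}_\XA^{\,1-\xi/\theta}\,\snorm{w}_{(\XA,K_1)_{\theta,\infty}}^{\,\xi/\theta}. \]
Composing with the assumed embedding $(\XA,K_1)_{\xi,1}\subset\XL$ and inserting the two bounds on $w$ gives $\snorm{w}_\XL\lesssim\delta^{1-\xi/\theta}\varrho^{\xi/\theta}$; taking the supremum bounds $\Omega(c_1\delta,K_\nu^{c_2C_0\varrho^s})$ by $C\varrho^{\xi/\theta}\delta^{1-\xi/\theta}$, which is the claim. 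The constant $C$ collects $M$, $C_0$, $C(\xi,\theta)$, the embedding constant, the quasi-norm constant and $c_1,c_2$ (or $d_1,d_2$), and is independent of $x,\delta,\varrho$.
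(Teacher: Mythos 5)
Your overall architecture coincides with the paper's: identify $K_\nu$ with $(\XA,K_1)_{\theta,\infty}$ for $\nu=\theta/((1-\theta)(u-1)+\theta)$, feed the resulting ball $K_\nu^{\overline{\varrho}}$ into \Cref{thm:minimax} with $L(x_1,x_2)=\|x_1-x_2\|_{\XL}$, and bound the modulus of continuity by the interpolation inequality coming from $(\XA,K_1)_{\xi,1}\subset\XL$ (your $K$-functional integration argument is exactly the proof of the embedding criterion the paper cites as \cite[Thm.~3.11.4]{Bergh1976}, used in \Cref{prop:bound_on_modulus} together with reiteration). The endpoint and homogeneity checks pinning down the exponent $s=\frac{u-1}{(1-\theta)(u-1)+\theta}$ are correct, and the bookkeeping with the parameter-choice windows is fine.

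The gap sits exactly where you locate "the real work": your proposed proof of the two-sided equivalence $\varrho_\nu(x)\sim\|x\|_{(\XA,K_1)_{\theta,\infty}}^{\,s}$ does not close. You want to compare the value function $W_x(\alpha)=\min_z\bigl(\tfrac12\|A(x-z)\|_\Y^2+\tfrac{\alpha}{u}\|z\|_\X^u\bigr)$ with the $K$-functional $K(x,t)=\inf_{z\in K_1}\bigl(\|x-z\|_{\XA}+t\|z\|_{\mathrm{lin}}\bigr)$, invoking \Cref{ass:K_1_quasi_banach} "to pass to $\|z\|_{\mathrm{lin}}$". But \Cref{ass:K_1_quasi_banach} only relates $\|z\|_{\mathrm{lin}}^{u-1}$ to $\varrho_1(z)$; it gives no comparison between $\|z\|_\X$ and $\|z\|_{\mathrm{lin}}$, which are (quasi-)norms of different spaces (in Example 1, $\X=\lspace{\overline r}{p}$ while $K_1=\lspace{\overline s}{2p-2}$). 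Concretely: to bound $K(x,t)$ from $\varrho_\nu(x)$ you must exhibit $z\in K_1$ with controlled $\varrho_1(z)$, and the Tikhonov minimizer $x_\alpha$ only comes with controlled $\|x_\alpha\|_\X$ unless you use the first-order optimality condition together with \Cref{lem:characterization_linear_case} to conclude $\varrho_1(x_\alpha)\leq\frac1\alpha\|Ax-Ax_\alpha\|_\Y$; conversely, the near-optimal splitting $z_t$ from the $K$-functional has controlled $\|z_t\|_{\mathrm{lin}}$ but uncontrolled $\|z_t\|_\X^u$, so you cannot simply insert it into $W_x$. The paper bridges this by never comparing the two penalties directly: it introduces the best-approximation function $\gamma_x(r)=\inf\{\|Ax-Az\|_\Y:\varrho_1(z)\leq r\}$ and proves the two-sided relation between $\gamma_x$ and $\|Ax-Ax_\alpha\|_\Y$ (\Cref{lem:approx_image_tik}, whose second inequality replaces $\mathcal{R}(z)-\mathcal{R}(x_\alpha)$ by $\langle A^\ast\omega,z-x_\alpha\rangle\leq r\|Az-Ax_\alpha\|_\Y$ using a source element $\omega$ for $z$), then converts Hölder bounds on $\gamma_x$ into the interpolation norm (\Cref{prop:image_bound_vs_approx_bound,prop:K_nu_under_quasi_assumption}). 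This duality step — the characterization $\varrho_1(x)=\inf\{\|\omega\|_\Y:A^\ast\omega\in\partial\mathcal{R}(x)\}$ and its use on both $x_\alpha$ and on competitors $z$ — is the missing idea; without it the "reconciliation of powers" you defer is not merely delicate but unavailable from the stated hypotheses.
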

\neww{We refer to \Cref{sec:error_Banach} for the \hyperref[pr:thm_error_Banach]{proof of Theorem 2}.}
\begin{remark}\label{rem:limiting_case}
The statement of the theorem remains valid in the limiting case $\theta=1$ where the source condition in terms of $(\XA,K_1)_{\theta,\infty}$ has to be replaced by simply $x\in K_1$ with $\|x \|_\textrm{lin}\leq \varrho$. Here the apriori rule is $\alpha \sim \varrho^{-(u-1)} \delta$. 
\end{remark} 
We illustrate the impact of this result by applying it to three more concrete Banach space regularization setups. 
\subsubsection*{Example 1: Weighted \texorpdfstring{$p$}{}-Norm Penalization} \label{ex:ex_1}
Let $\Lambda$ be a countable index set, $p\in (0,\infty)$ and $\overline{\omega}=(\overline{\omega}_j)_{j\in \Lambda}$ a sequence of positive reals. We consider weighted sequence spaces $\lspace {\overline{\omega}} p$ defined by 
\[ \lspace {\overline{\omega}}  p = \left\{ x\in \mathbb{R}^\Lambda \colon \lnorm x {\overline{\omega}}  p < \infty \right\} \quad\text{with } \quad \lnorm x {\overline{\omega}}  p ^p = \sum\nolimits_{j\in \Lambda} {\overline{\omega}}_j^p | x_j |^p . \] 
We assume that the forward operator maps a weighted $\ell^2$-space isomorphically to the image space $\Y$. More precisely, we suppose that \eqref{eq:XAequiA} holds true with $\XA=\lspace  {\overline{a}} 2$  for $\overline{a}=(\overline{a}_j)_{j\in \Lambda}$ a sequence of positive real numbers.\\
Moreover let $p\in (1,2)$ and $\overline{r}=(\overline{r}_j)_{j\in \Lambda}$ a sequence of weights such that $\overline{a} \overline{r}\inv$ is bounded. We consider $\X=\lspace {\overline{r}}p \subset\lspace  {\overline{a}} 2$ (see \cite[Prop.~A.1.]{MH:20}) with $\mathcal{R}(x)=\frac{1}{p} \lnorm x {\overline{r}} p ^p.$ \\
Furthermore we introduce weighted weak $\ell^p$-spaces.
For $\mu=  (\mu_j)_{j\in \Lambda}$ and $\nu= (\nu_j)_{j\in \Lambda}$ sequences of positive reals and $t\in (0,\infty)$ those are defined by the following quasi-norms
\[ \wspace \mu\nu t = \{ x\in \mathbb{R}^\Lambda \colon \wnorm x \mu\nu t <\infty \} \quad\text{ with } \wnorm x \mu\nu t ^t = \sup_{\tau>0} \left( \tau^t \sum\nolimits_{j\in \Lambda} \nu_j \mathds{1}_{\{ \mu_j |x_j| >\tau \}} \right) .\] 
We apply \Cref{thm:error_bounds} and obtain the following result. 
\begin{corollary}[error bounds for weighted $p$-norm penalties] \label{thm:rates_weighted}
Let \newww{$p\in (1,2)$,} $t\in (2p-2,p)$ and $\delta, \varrho,  \alpha>0$ and $c_r\geq c_l>0$, $C_D>c_D>1$ and $\mu:=(\overline{a}^2 \overline{r}^{-p})^\frac{1}{2-p} ,\quad\nu:=(\overline{a}\inv \overline{r})^\frac{2p}{2-p}$. Assume  $x\in \wspace \mu\nu t$ with $\wnorm x \mu\nu t\leq \varrho$ and $\xh\in R_\alpha(g^\delta).$
There is a constant $C>0$ independent of $x$, $\delta$ and $\varrho$ such that whenever $\alpha$ satisfies either  
\[ c_l \varrho^{-\frac{t(2-p)}{2-t}} \delta^\frac{2(2-p)}{2-t} \leq \alpha \leq c_r \varrho^{-\frac{t(2-p)}{2-t}} \delta^\frac{2(2-p)}{2-t}
\quad\text{ or }\quad
 c_D\delta\leq \|g^\delta- A\xh \|_\Y \leq  C_D \delta \] 
the bound 
\[ \lnorm {x-\xh}{\overline{r}} p \leq C \varrho^\frac{t(2-p)}{p(2-t)}  \delta^\frac{2(p-t)}{p(2-t)}.\] 
holds true.
\end{corollary}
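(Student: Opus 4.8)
The plan is to match the abstract data of \Cref{thm:error_bounds} with concrete weighted sequence spaces and then read off the stated exponents, so the whole argument reduces to three computations: the identification of $K_1$, two real--interpolation identities, and a substitution of exponents.

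First I would determine $K_1$ and the functional $\varrho_1$ from \eqref{eq:def_K}. Since $A$ is a norm isomorphism $\XA=\lspace{\overline{a}}{2}\to\Y$, the residual $\|Ax-Ax_\alpha\|_\Y$ is, up to the constant $M$ of \eqref{eq:XAequiA}, equivalent to $\lnorm{x-x_\alpha}{\overline{a}}{2}$. Comparing with the diagonal model (exact isometry), the Tikhonov minimization decouples coordinatewise into one-dimensional problems $\min_z\tfrac1{2\alpha}\overline{a}_j^2(x_j-z)^2+\tfrac1p\overline{r}_j^p|z|^p$; summing the squared residuals and evaluating $\sup_{\alpha>0}$ (the supremum is attained as $\alpha\downarrow0$) gives $\varrho_1(x)^2\sim\sum_j(\overline{r}_j^p/\overline{a}_j)^2|x_j|^{2(p-1)}$. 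Hence $K_1=\lspace{w}{2(p-1)}$ with weight $w=\overline{r}^{\,p/(p-1)}\overline{a}^{-1/(p-1)}$ and $\|\cdot\|_\textrm{lin}=\lnorm{\cdot}{w}{2(p-1)}$, and since $2(p-1)\in(0,2)$ this is a quasi-Banach space, so \Cref{ass:K_1_quasi_banach} holds with $u=p$. I expect this step to be the main obstacle: because $A$ is only a norm isomorphism and not diagonal, the minimization does not decouple exactly, and one has to squeeze the true functional between diagonal functionals with regularization parameter rescaled by $M^{2}$ and control the resulting minimizers; the slack by the factor $M$ allowed in \Cref{ass:K_1_quasi_banach} is precisely what absorbs these constants.

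Next I would compute the two interpolation spaces appearing in \Cref{thm:error_bounds} using the standard interpolation identities for weighted sequence spaces (cf.\ \cite{Bergh1976}). Testing the $K$-functional of $(\XA,K_1)=(\lspace{\overline{a}}{2},\lspace{w}{2(p-1)})$ on point masses yields the multiplier $\overline{a}_j^{1-\theta}w_j^{\theta}$, which together with the Lorentz counting structure identifies $(\XA,K_1)_{\theta,\infty}$ with the weighted weak space $\wspace\mu\nu t$, where $\tfrac1t=\tfrac{1-\theta}2+\tfrac{\theta}{2(p-1)}$ and $\mu,\nu$ are the weights of the statement; solving gives $\theta=\tfrac{(p-1)(2-t)}{t(2-p)}$, which lies in $(0,1)$ exactly when $t>2p-2$. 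For the embedding hypothesis I would take the critical value $\xi=\tfrac{p-1}{p}$: then $\tfrac{1-\xi}2+\tfrac{\xi}{2(p-1)}=\tfrac1p$ and the point-mass multiplier collapses to $\overline{r}_j$, so $(\XA,K_1)_{\xi,1}$ is the Lorentz space with integrability $p$, fine index $1$ and weight $\overline{r}$, which embeds continuously into $\XL=\lspace{\overline{r}}{p}$ precisely because the fine index $1\le p$. A short check shows $0<\xi<\theta<1$ is equivalent to $t\in(2p-2,p)$.

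Finally I would apply \Cref{thm:error_bounds} with $\XA=\lspace{\overline{a}}{2}$, $\XL=\lspace{\overline{r}}{p}$, $u=p$ and the values $\theta,\xi$ above; \eqref{eq:XAequiA} is the standing assumption of the example, \Cref{ass:K_1_quasi_banach} was verified in the first step, and \Cref{ass:topological} is not needed since $\lspace{\overline{r}}{p}$ is reflexive for $p\in(1,2)$. The hypothesis $\wnorm{x}\mu\nu t\le\varrho$ passes, through the norm equivalence of the previous step, to $\|x\|_{(\XA,K_1)_{\theta,\infty}}\le C'\varrho$; absorbing the fixed constant $C'$ into the constants of the a priori window and into the final constant $C$ turns the two parameter-choice conditions of \Cref{thm:error_bounds} into those of the corollary. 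Substituting $u=p$, $\theta=\tfrac{(p-1)(2-t)}{t(2-p)}$ and $\xi=\tfrac{p-1}{p}$ into the exponents $-\tfrac{u-1}{\theta}$, $\tfrac{(1-\theta)(u-1)+\theta}{\theta}$ of the a priori rule and into $\tfrac{\xi}{\theta}$, $1-\tfrac{\xi}{\theta}$ of the error bound then reproduces the exponents $-\tfrac{t(2-p)}{2-t}$, $\tfrac{2(2-p)}{2-t}$ and $\tfrac{t(2-p)}{p(2-t)}$, $\tfrac{2(p-t)}{p(2-t)}$ stated in the corollary, which completes the proof.
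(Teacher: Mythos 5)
Your overall architecture matches the paper's: verify \Cref{ass:K_1_quasi_banach}, identify $(\XA,K_1)_{\theta,\infty}$ and $(\XA,K_1)_{\xi,1}$ via the Peetre--type interpolation identities for weighted sequence spaces, and substitute into \Cref{thm:error_bounds}. Your values $\xi=\frac{p-1}{p}$, $\theta=\frac{(p-1)(2-t)}{t(2-p)}$, the equivalence of $0<\xi<\theta<1$ with $t\in(2p-2,p)$, and all four exponent substitutions are correct, and the observation that reflexivity of $\lspace{\overline{r}}{p}$ disposes of \Cref{ass:topological} is exactly what the paper does.

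The genuine gap is in your first step, the computation of $K_1$ and $\varrho_1$. You compute $\varrho_1$ by solving the coordinatewise Tikhonov problems for a \emph{diagonal} surrogate of $A$ and then propose to ``squeeze the true functional between diagonal functionals with regularization parameter rescaled by $M^2$ and control the resulting minimizers.'' Sandwiching the functionals controls the \emph{value} function $\vartheta_{Ax}(\alpha)=\min_z T_\alpha(z,Ax)$, but $\varrho_1(x)$ is built from the residuals $\|Ax-Ax_\alpha\|_\Y$ of the \emph{actual minimizers}, which by \Cref{lem:diff} are (up to $-2\alpha^2$) the \emph{derivative} of $\vartheta_{Ax}$; a two-sided bound on a convex function does not give a pointwise bound on its derivative, and the minimizers of the two functionals need not be close in any sense you can exploit directly. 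One can rescue a squeeze of this kind, but only by routing it through the defect function $\sigma_x(\alpha)=\mathcal{R}(x)-\vartheta_{Ax}(\alpha)$ and the two-sided equivalence of \Cref{thm:compare}(i)$\Leftrightarrow$(ii) at $\nu=1$ --- none of which appears in your sketch, and ``control the resulting minimizers'' points in the wrong direction. The paper avoids the issue entirely: \Cref{lem:characterization_linear_case} identifies $\varrho_1(x)$ with $\inf\{\|\omega\|_\Y\colon A^\ast\omega\in\partial\mathcal{R}(x)\}$, and \Cref{prop:computation_rholin} converts this, up to the factor $M$, into the dual norm $\|\xi\|_{\X_A'}=\lnorm{\xi}{\overline{a}^{-1}}{2}$ of the explicit subgradient $\xi_j=\overline{r}_j^p|x_j|^{p-1}\operatorname{sgn}(x_j)$, which immediately gives $K_1=\lspace{\overline{s}}{2p-2}$ with $\overline{s}=\overline{a}^{-\frac{1}{p-1}}\overline{r}^{\frac{p}{p-1}}$ (\Cref{prop:varrho_1_wei}) for \emph{any} norm isomorphism $A$, with no decoupling and no control of minimizers. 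That source-element characterization is the missing idea; without it (or the defect-function detour) your first step does not go through.
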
 
\neww{
The \hyperref[pr:weighed]{proof of Corollary 2.4} can by found in \Cref{sec:error_Banach}.}
\begin{remark}\label{rem:limiting_wei}
In the limiting case $t=2p-2$ the statement remains valid if one replaces $\wspace \mu\nu t$ by $K_1= \lspace {\overline{s}} {2p-2}$ with $\overline{s}=\overline {a}^{-\frac{1}{p-1}}\overline{r}^\frac{p}{p-1}$. Here we obtain the rate 
$ \lnorm {x-\xh}{\overline{r}} p \leq C \varrho^\frac{p-1}{p}  \delta^\frac{1}{p}.$
\end{remark}
In \cite{grasmair:10} the rate $\mathcal{O}(\delta^\frac{1}{p})$ is already proven under a condition similar to \eqref{eq:classical_sc}. Here we obtain intermediate convergences rates between $\mathcal{O}(\delta^0)$ and $\mathcal{O}(\delta^\frac{1}{p})$. This has the advantage that we obtain statements on the speed of convergences on larger sets.
\begin{remark} 
\Cref{thm:rates_weighted} remains valid word by word in the case $p=1$ (see \cite[Thm.~4.4]{MH:20}).
\end{remark} 
\subsubsection*{Example 2: Besov \texorpdfstring{$0,p,p$}{}-Penalties}\label{ex:0pp}
We introduce a scale of sequence spaces that allows to characterize Besov function spaces by decay properties of coefficients in wavelet expansions (see \cite{triebel:08}). \\
Let $(\Lambda_j)_{j\in\nat}$ be a family of sets such that 
$2^{jd}\leq |\Lambda_j|\leq C_\Lambda 2^{jd}$ for some constant $C_\Lambda\geq 1$ and all $j\in\nat$. We consider the index set $\Lambda:= \{(j,k) \colon j\in\mathbb{N}_0, k\in\Lambda_j\}$.\\
For $p,q \in (0,\infty)$ and $s\in \mathbb{R}$ we set
$
\bspace spq=\left\{x\in\mathbb{R}^\Lambda\colon \bn xspq<\infty\right\} $ with \[ 
\bn xspq^q:= \sum\nolimits_{j\in \nat} 2^{jq(s+\frac{d}{2}-\frac{d}{p})} \left( \sum\nolimits_{k\in \Lambda_j} |x_{j,k}|^p \right)^{q/p}.  
\]
with the usual replacements for $p=\infty$ or $q = \infty$. \\
Let $a>0$ and assume that the forward operator $A\colon \bspace {-a}22\rightarrow \Y$ satisfies 
\eqref{eq:XAequiA} with $\XA= \bspace {-a}22$.
Let $p\in (1,\infty)$ (for $p=1$ we refer to \cite{MH:20} again) with $\frac{d}{p}-\frac{d}{2}\leq a$. Then we have a continuous embedding $\bspace 0pp\subset \bspace {-a}22$ (see \cite[3.3.1.(6),(7), 3.2.4.(1)]{Triebel2010}). \\  We use $\X=\bspace 0pp$ with  \[ 
\mathcal{R}(x)=\frac{1}{p} \bn x 0pp ^p = \frac{1}{p} \sum\nolimits_{(j,k)\in \Lambda} 2^{jp \left(\frac{d}{2}-\frac{d}{p}\right)}  |x_{j,k}|^p \quad\text{for }x\in \bspace 0pp. \]
\neww{Note that we have 
\begin{align}\label{eq:besov_weighted_identification}
 \bspace spp =\lspace {\overline{\omega}_{s,p}}p \quad\text{with equal norm for}\quad (\overline{\omega}_{s,p})_{(j,k)}= 2^{j(s+\frac{d}{2}-\frac{d}{p})}. 
\end{align}
Hence for $p<2$, this example is a special case of \hyperref[ex:ex_1]{Example 1}.}\\
Let $\tilde{s}=\frac{a}{p-1}$ and $\tilde{t}= 2p-2$. For $0<s<\tilde{s}$ we set 

\begin{align}\label{eq:k_s_interpol}
k_s := \left( \bspace {-a}22,  \bspace {\tilde{s}}{\tilde{t}}{\tilde{t}}\right)_{\theta,\infty} \quad\text{with}\quad \theta=\frac{p-1}{p}\frac{s+a}{a}.
\end{align}
Here the application of \Cref{thm:error_bounds} yields the following error bound. 
\begin{corollary}[error bounds for $0,p,p$-penalties] \label{thm:rates_besov_pp}
Let $0 < s< \tilde{s}$ and $\delta, \varrho,  \alpha>0$, $c_r\geq c_l>0$, $C_D>c_D>1$. Assume $x\in k_s$ with $\| x\|_{k_s}\leq \varrho$ and $\xh\in R_\alpha(g^\delta).$
There is a constant $C>0$ independent of $x$, $\delta$ and $\varrho$ such that whenever $\alpha$ satisfies either  
\[ c_l \varrho^{-\frac{pa}{s+a}} \delta^\frac{(2-p)s+2a}{s+a} \leq \alpha \leq c_r \varrho^{-\frac{pa}{s+a}} \delta^\frac{(2-p)s+2a}{s+a}
\quad\text{ or }\quad
 c_D\delta\leq \|g^\delta- A\xh \|_\Y \leq  C_D \delta \] 
the bound 
\[ \bn {x-\xh}0 p p \leq C \varrho^\frac{a}{s+a} \delta^\frac{s}{s+a}.\] 
holds true.
\end{corollary}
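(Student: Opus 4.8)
The plan is to derive Corollary~\ref{thm:rates_besov_pp} as a direct specialization of \Cref{thm:error_bounds}, with the main work being the identification of the interpolation parameters and the verification of the required embeddings. I would set $u=p$, $\XA=\bspace{-a}22$, $\XL=\bspace0pp$, and use the space $K_1=\bspace{\tilde s}{\tilde t}{\tilde t}$ with $\tilde s=\frac{a}{p-1}$ and $\tilde t=2p-2$ as the ``source space.'' The source condition $x\in k_s$ with $\|x\|_{k_s}\le\varrho$ then becomes precisely the hypothesis $x\in(\XA,K_1)_{\theta,\infty}$ of \Cref{thm:error_bounds} once we set $\theta=\frac{p-1}{p}\frac{s+a}{a}$ as in \eqref{eq:k_s_interpol}.

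First I would justify that \Cref{ass:K_1_quasi_banach} holds for this choice of $K_1$. Since $p\in(1,\infty)$, we have $\tilde t=2p-2\in(0,\infty)$, so $\bspace{\tilde s}{\tilde t}{\tilde t}$ is a quasi-Banach space (a genuine Banach space when $\tilde t\ge1$, i.e.\ $p\ge\frac32$); via the identification \eqref{eq:besov_weighted_identification} this is a weighted $\ell^{2p-2}$-space. The key quantitative point is to verify $\frac1M\varrho_1(x)\le\|x\|_\textrm{lin}^{u-1}\le M\varrho_1(x)$ with $u-1=p-1$. This should follow from the computation of $K_1$ underlying \hyperref[ex:ex_1]{Example~1} together with the limiting case in \Cref{rem:limiting_wei}: with the weights there, $K_1=\lspace{\overline s}{2p-2}$ where $\overline s=\overline a^{-1/(p-1)}\overline r^{p/(p-1)}$, and under \eqref{eq:besov_weighted_identification} the exponents $s+\frac d2-\frac dp$ in $\overline\omega_{s,p}$ combine with the weight $\overline a$ coming from $\bspace{-a}22$ to reproduce exactly the smoothness index $\tilde s=\frac a{p-1}$. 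I would check that $\|x\|_\textrm{lin}^{p-1}=\|x\|_{\bspace{\tilde s}{\tilde t}{\tilde t}}^{p-1}$ matches $\varrho_1$ up to the isomorphism constant $M$ from \eqref{eq:XAequiA}.

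Next I would supply the embedding $(\XA,K_1)_{\xi,1}\subset\XL=\bspace0pp$ required for some $0<\xi<\theta$. This is the genuinely technical step: one computes the real interpolation space $(\bspace{-a}22,\bspace{\tilde s}{\tilde t}{\tilde t})_{\xi,1}$ using the interpolation theory of Besov/weighted sequence spaces (the $K$-functional of such scales is classical, cf.\ \cite{Bergh1976,Triebel2010}) and identifies it as a Besov space $\bspace{s'}{p'}{1}$ whose smoothness $s'$ and integrability $p'$ one then embeds into $\bspace0pp$ by the standard Besov embedding. The parameter bookkeeping should confirm that the threshold $\theta=\frac{p-1}p\frac{s+a}a$ is exactly the value making the interpolated smoothness equal to $0$, so that the strict inequality $\xi<\theta$ yields positive smoothness in the interpolated space and hence a valid embedding into $\bspace0pp$.

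Finally I would translate the abstract rate of \Cref{thm:error_bounds}, namely $\|x-\xh\|_\XL\le C\varrho^{\xi/\theta}\delta^{1-\xi/\theta}$ and the parameter windows $\alpha\sim\varrho^{-(u-1)/\theta}\delta^{[(1-\theta)(u-1)+\theta]/\theta}$, into the explicit exponents stated in the corollary. Substituting $u=p$ and $\theta=\frac{p-1}p\frac{s+a}a$ and simplifying should produce the stated a~priori window $\alpha\sim\varrho^{-\frac{pa}{s+a}}\delta^{\frac{(2-p)s+2a}{s+a}}$ and, in the limit $\xi\uparrow\theta$ absorbed into the constant $C$, the final bound $\bn{x-\xh}0pp\le C\varrho^{\frac a{s+a}}\delta^{\frac s{s+a}}$. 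I expect the main obstacle to be the clean identification of the interpolation space $(\XA,K_1)_{\xi,1}$ and its embedding into $\bspace0pp$, since everything else is substitution; the exponent algebra is routine but must be carried out carefully to match the displayed formulas.
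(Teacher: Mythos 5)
Your overall strategy coincides with the paper's: verify \Cref{ass:K_1_quasi_banach} via the identification $K_1=\bspace{\tilde s}{\tilde t}{\tilde t}$ (the paper's \Cref{prop:varrho_1_besov_pp}, proved exactly as you sketch via the weighted-$\ell^p$ computation), read off $\theta=\frac{p-1}{p}\frac{s+a}{a}$ from \eqref{eq:k_s_interpol}, and apply \Cref{thm:error_bounds}. However, your treatment of the one genuinely technical step --- the embedding $(\XA,K_1)_{\xi,1}\subset\bspace0pp$ and the choice of $\xi$ --- contains a real error. Interpolating $\bspace{-a}22$ (smoothness $-a$) with $\bspace{\tilde s}{\tilde t}{\tilde t}$ (smoothness $\tilde s=\frac{a}{p-1}$) at parameter $\eta$ gives smoothness $(1-\eta)(-a)+\eta\tilde s$, which vanishes at $\eta=\frac{p-1}{p}$, \emph{not} at $\eta=\theta$; the parameter $\theta$ produces smoothness $s>0$ (which is precisely why $k_s$ carries the regularity $s$). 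Consequently your claim that ``$\xi<\theta$ yields positive smoothness'' is backwards twice over: smoothness is increasing in the interpolation parameter, so values below the zero-smoothness threshold give \emph{negative} smoothness and no embedding into $\bspace0pp$. Moreover, the target rate $\varrho^{a/(s+a)}\delta^{s/(s+a)}$ forces $\xi/\theta=\frac{a}{s+a}$, i.e.\ the single value $\xi=\frac{p-1}{p}$; there is no ``limit $\xi\uparrow\theta$'' to absorb into $C$ --- in that limit the exponent $1-\xi/\theta$ of $\delta$ degenerates to $0$ and the bound becomes vacuous.

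The paper closes this step differently and more cleanly: it fixes $\xi=\frac{p-1}{p}$ and invokes the interpolation identity of \cite[Thm.~2, Rem.]{F:78} (resp.\ \cite[3.3.6.(9)]{Triebel2010} for $p=2$) to get $\bspace0pp=\left(\bspace{-a}22,\bspace{\tilde s}{\tilde t}{\tilde t}\right)_{\xi,p}=(\XA,K_1)_{\xi,p}$ \emph{with the correct fine index $p$ built in}, and then uses the elementary nesting $(\XA,K_1)_{\xi,1}\subset(\XA,K_1)_{\xi,p}$ in the second interpolation index. This avoids any appeal to ``positive smoothness plus Besov embedding,'' which at the borderline smoothness $0$ would in any case require care with the fine indices. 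Since $s>0$ guarantees $\xi=\frac{p-1}{p}<\theta$, all hypotheses of \Cref{thm:error_bounds} are then met and the exponent substitution you describe in your final paragraph goes through. To repair your argument, replace your embedding step by this identification and drop the limiting argument.
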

\neww{
The \hyperref[pr:pp]{proof of Corollary 2.7} can by found in \Cref{sec:error_Banach}.}
\begin{remark} \label{rem:limiting_pp}
In the limiting case $s=\tilde{s}$ the result remains valid if one replaces $k_s$ by $K_1= \bspace {\tilde{s}} {\tilde{t}} {\tilde{t}}$ and we obtain the bound
$\bn {x-\xh}0 p p \leq C  \varrho^\frac{p-1}{p}  \delta^\frac{1}{p}$.
\end{remark} 
For $p=2$ we have $k_s=\bspace s 2 \infty$ (see \cite[3.3.6.(9)]{Triebel2010}). The following proposition provides a nesting of $k_s$ for $p\neq 2$ by Besov sequence spaces.
\begin{proposition} \label{prop:nesting_pp}
Let $0<s<\tilde{s}$ and $t=\frac{2pa}{(2-p)s+2a}$. 
\begin{remunerate}
\item For $p<2$ we have continuous embeddings
\[ \bspace stt \subset k_s \subset \bspace s {t-\varepsilon} \infty \quad\text{for all}\quad 0< \varepsilon <t. \] 
\item For $p>2$ we have continuous embeddings $ \bspace stt \subset k_s \subset \bspace s {2} \infty.$ 
\end{remunerate}
\end{proposition}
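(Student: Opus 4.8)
The plan is to strip $k_s$ of its interpolation packaging by identifying it with a weighted weak-$\ell^t$ (Lorentz) sequence space, and then to read off both inclusions from the elementary embeddings relating strong, weak and Besov sequence spaces. First I would apply the identification \eqref{eq:besov_weighted_identification} to both endpoints of $k_s=\left(\bspace{-a}22,\bspace{\tilde s}{\tilde t}{\tilde t}\right)_{\theta,\infty}$, writing them as weighted $\ell^p$ spaces over the full index set $\Lambda$: $\bspace{-a}22=\lspace v2$ and $\bspace{\tilde s}{\tilde t}{\tilde t}=\lspace w{\tilde t}$ with weights $v_{(j,k)}=2^{-ja}$ and $w_{(j,k)}=2^{j(\tilde s+d/2-d/\tilde t)}$ that are constant on each block $\Lambda_j$. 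Real interpolation of a pair of weighted $\ell^{p_0},\ell^{p_1}$ spaces is classical: after the substitution $y=vx$ trivializing one weight, the $K$-functional of $(\ell^2,\ell^{\tilde t}(w/v))$ is governed by a weighted decreasing rearrangement, and the space $(\ell^2,\ell^{\tilde t}(w/v))_{\theta,\infty}$ is a weighted Lorentz space of type $\ell^{t,\infty}$. Transporting back, $k_s$ is the weighted weak-$\ell^t$ space $\wspace\mu\nu t$ with the weights of \Cref{thm:rates_weighted}; for $p<2$ this is exactly the assertion that \hyperref[ex:0pp]{Example 2} is a special case of \hyperref[ex:ex_1]{Example 1}, and for $p>2$ the same computation runs verbatim. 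Two index identities underpin this: the interpolated smoothness $(1-\theta)(-a)+\theta\tilde s=s$ and the interpolated integrability $\tfrac1t=\tfrac{1-\theta}2+\tfrac\theta{\tilde t}$, which evaluates to $t=\tfrac{2pa}{(2-p)s+2a}$ for the stated $\theta$.

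For the lower inclusion $\bspace stt\subset k_s$, I would again use \eqref{eq:besov_weighted_identification} to write $\bspace stt=\lspace{\omega_{s,t}}t$ and recognise this, after matching weights, as the strong companion of $\wspace\mu\nu t$; the inclusion is then the elementary embedding $\ell^t=\ell^{t,t}\subset\ell^{t,\infty}$ of a weighted strong $\ell^t$ space into the associated weighted weak space. The only point requiring attention is the weight identity $\omega_{s,t}\simeq\nu^{1/t}\mu$, which is a consequence of the same index computations.

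The upper inclusion is the crux. For $p<2$ I must prove $\wspace\mu\nu t\subset\bspace s{t-\varepsilon}\infty$ and for $p>2$ that $\wspace\mu\nu t\subset\bspace s2\infty$; in both regimes the target within-block integrability $t'\in\{t-\varepsilon,2\}$ is strictly below $t$ (note $t<2$ when $p<2$ and $t>2$ when $p>2$). Fixing a level $j$, the global weak-$\ell^t$ bound controls, for every threshold $\tau$, the $\nu$-measure of the coefficients in the block $\Lambda_j$ exceeding it, hence yields a weak-$\ell^t$ bound within $\Lambda_j$ with an explicit constant built from $\mu_j,\nu_j$; since $t'<t$, the within-block embedding $\ell^{t,\infty}(\Lambda_j)\subset\ell^{t'}(\Lambda_j)$ converges and produces a block-size factor $(2^{jd})^{1/t'-1/t}$, which is finally absorbed by the smoothness weight $2^{j(s+d/2-d/t')}$ upon taking the supremum over $j$.

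The main obstacle will be making this last step quantitative: executing the weighted $K$-functional computation so that $k_s$ is pinned to $\wspace\mu\nu t$ with the correct weights, and then checking that the $j$-dependent block-size growth in the weak-to-strong within-block estimate is balanced by the smoothness weight for exactly the value $t=\tfrac{2pa}{(2-p)s+2a}$. It is precisely at the endpoint $t'=t$ that this balance degenerates (the within-block sum $\sum_n n^{-1}$ diverges logarithmically), which forces the loss of $\varepsilon$ for $p<2$; for $p>2$ the relevant integrability is the fixed endpoint value $2<t$, so no such degeneration occurs and the bound $\bspace s2\infty$ is $\varepsilon$-free.
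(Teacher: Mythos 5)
Your proof is correct in substance, but it takes a genuinely different route from the paper's. The paper never leaves the abstract interpolation framework: the lower inclusion is the functor monotonicity $(\cdot,\cdot)_{\theta,t}\subset(\cdot,\cdot)_{\theta,\infty}$ applied to the identity $\bspace stt=\left(\bspace{-a}22,\bspace{\tilde{s}}{\tilde{t}}{\tilde{t}}\right)_{\theta,t}$, and the upper inclusion is obtained for $p<2$ by reiterating through an auxiliary space $\bspace{s^\prime}{t-\varepsilon}{t-\varepsilon}=\left(\bspace{-a}22,\bspace{\tilde{s}}{\tilde{t}}{\tilde{t}}\right)_{\theta^\prime,t-\varepsilon}$ with $\theta<\theta^\prime<1$ and then interpolating the chain $\bspace{-a}22\subset\bspace{-a}2\infty\subset\bspace{-a}{t-\varepsilon}\infty$, respectively for $p>2$ by interpolating the embedding $\bspace{\tilde{s}}{\tilde{t}}{\tilde{t}}\subset\bspace{\tilde{s}}{2}{\tilde{t}}$. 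You instead make $k_s$ concrete as a weighted weak-$\ell^t$ space via the same Freitag-type identification the paper invokes in the proof of \Cref{thm:rates_weighted}, and then prove both inclusions by hand; your blockwise rearrangement estimate is sound: restricting the global weak-type bound to $\Lambda_j$ gives $\left(\mu_j|x_{j,\cdot}|\right)^*_n\leq\varrho\,\nu_j^{-1/t}n^{-1/t}$, summing $n^{-t^\prime/t}$ over $n\leq C_\Lambda 2^{jd}$ with $t^\prime<t$ produces the factor $2^{jd(1/t^\prime-1/t)}$, and the bookkeeping $2^{j(s+d/2-d/t^\prime)}\cdot 2^{jd(1/t^\prime-1/t)}=2^{j(s+d/2-d/t)}\simeq\mu_j\nu_j^{1/t}$ closes the estimate, with the divergence of $\sum_n n^{-1}$ at $t^\prime=t$ explaining exactly why the $\varepsilon$ is lost for $p<2$ and why $t^\prime=2<t$ is safe for $p>2$. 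Your route buys transparency (one sees concretely which coefficient configurations populate $k_s$ and where the endpoint degenerates), at the cost of carrying out the weighted $K$-functional computation explicitly; note that ``runs verbatim for $p>2$'' is slightly optimistic, since the weight formulas $\mu=(\overline{a}^2\overline{r}^{-p})^{1/(2-p)}$, $\nu=(\overline{a}\inv\overline{r})^{2p/(2-p)}$ are stated in \Cref{thm:rates_weighted} only for $p\in(1,2)$ and you would have to re-extract the weights from the general interpolation theorem for $\tilde{t}=2p-2>2$ — nothing breaks, but it is an extra step. The paper's route buys brevity and stays entirely on cited interpolation identities, at the cost of the somewhat opaque detour through the auxiliary space $\bspace{s^\prime}{t-\varepsilon}{t-\varepsilon}$ and the reiteration theorem.
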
 
\neww{
We refer to \Cref{sec:equivalence} for a  \hyperref[pr:nesting]{proof of Proposition 2.9}.}
For $p<2$ the same argument as in \cite[Ex.6.7.]{MH:20} shows that describing the regularity of functions with jumps or kinks via their wavelet expansion in terms of $k_s$ allows for a higher value of $s$ then using $\Bspace ps\infty$ as in \cite{WSH:20}. Therefore we obtain a faster convergence rate for this class of functions. \\
For $p>2$ we measure the error in a stronger norm than the $\ell^2$-norm. On the other hand the set on which we obtain convergence rates is smaller than $\bspace s2\infty$.
\subsubsection*{Example 3: Besov \texorpdfstring{$0,2,q$}{}-Penalties}
Again we consider $a>0$ and $\XA=\bspace {-a}22$ with $A$ satisfying \eqref{eq:XAequiA}. Let $q\in (1,\infty)$. Then there is a continuous embedding ${\X:=\bspace 0 2 q \subset \bspace {-a} 2 2}$ (see  \cite[3.3.1.(7)]{Triebel2010}) and we choose \[ \mathcal{R}(x)= \frac{1}{q} \bn x 02q ^q = \frac{1}{q} \sum\nolimits_{j\in \mathbb{N}_0} \left(\sum\nolimits_{k\in\Lambda_j} |x_{j,k}|^2 \right)^{q/2}.\]  
For a convergence analysis in the case $q=1$ we refer to \cite{HM:19}. 
The application of \Cref{thm:error_bounds} provides: 
\begin{corollary}[error bounds for $0,2,q$-penalties] \label{thm:rates_besov_2q}
Let $0 < s< \frac{a}{q-1}$ and \linebreak ${\delta, \varrho,  \alpha>0}$, $c_r\geq c_l>0$, $C_D>c_D>1$. Assume $x\in \bspace s 2\infty$ with $\bn xs2\infty \leq \varrho$ and $\xh\in R_\alpha(g^\delta).$
There is a constant $C>0$ independent of $x$, $\delta$ and $\varrho$ such that whenever $\alpha$ satisfies either  
\[ c_l \varrho^{-\frac{qa}{s+a}} \delta^\frac{(2-q)s+2a}{s+a} \leq \alpha \leq c_r \varrho^{-\frac{qa}{s+a}} \delta^\frac{(2-q)s+2a}{s+a} 
\quad\text{ or }\quad
 c_D\delta\leq \|g^\delta- A\xh \|_\Y \leq  C_D \delta \] 
the bound 
\[ \bn {x-\xh}0 2 2 \leq C \varrho^\frac{a}{s+a} \delta^\frac{s}{s+a}.\] 
holds true.
\end{corollary}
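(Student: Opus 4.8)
The plan is to obtain \Cref{thm:rates_besov_2q} as a direct application of \Cref{thm:error_bounds}, so the work reduces to identifying the spaces entering that theorem and checking its hypotheses. I would take $u=q$ for the penalty exponent, $\XA=\bspace{-a}22$, which satisfies \eqref{eq:XAequiA} by assumption, and $\XL=\bspace 0 2 2$. The embedding $\XL=\bspace 0 2 2\subset\bspace{-a}22=\XA$ holds since $a>0$, and $\XL$ is a Hilbert, hence Banach, space. As $\X=\bspace 0 2 q$ is reflexive for $q\in(1,\infty)$, \Cref{ass:topological} is not needed. It then remains to (i) compute $K_1$ and verify \Cref{ass:K_1_quasi_banach}, (ii) identify the interpolation spaces $(\XA,K_1)_{\theta,\infty}$ and $(\XA,K_1)_{\xi,1}$, and (iii) match the exponents.

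Step (i) is where I expect the real work to lie. Since $\mathcal{R}(x)=\frac{1}{q}\bn x 0 2 q^q$ is Fr\'echet differentiable for $q\in(1,\infty)$, its derivative $\mathcal{R}'(x)$ has $(j,k)$-component $\big(\sum_{k'\in\Lambda_j}|x_{j,k'}|^2\big)^{(q-2)/2}x_{j,k}$. Because $A\colon\XA\to\Y$ is an isomorphism, $A^\ast\colon\Y\to\XA^\ast$ is onto with $\XA^\ast=\bspace a 2 2$, so the classical source condition \eqref{eq:classical_sc} amounts to $\mathcal{R}'(x)\in\XA^\ast$, and I would invoke the equivalence between image space approximation rates of order one and this condition in the form $\varrho_1(x)\sim\bn{\mathcal{R}'(x)}a22$ (established by the equivalence results of \Cref{sec:equivalence}, and alternatively accessible through a direct scale-block analysis of the minimizers of $T_\alpha(\cdot,Ax)$). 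A short computation then yields
\[ \bn{\mathcal{R}'(x)}a22=\Big(\sum\nolimits_{j}2^{2ja}\big(\sum\nolimits_{k}|x_{j,k}|^2\big)^{q-1}\Big)^{1/2}=\bn x{\tilde s}2{\tilde t}^{\,q-1} \]
with $\tilde s=\frac{a}{q-1}$ and $\tilde t=2q-2$. Hence $K_1=\bspace{\tilde s}2{\tilde t}$ with $\|\cdot\|_\textrm{lin}=\bn\cdot{\tilde s}2{\tilde t}$, a quasi-norm turning $K_1$ into a quasi-Banach space (a genuine quasi-norm when $q<\tfrac32$), and $\varrho_1(x)\sim\|x\|_\textrm{lin}^{u-1}$ holds as required. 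The same scheme reproduces the spaces $K_1$ recorded in \Cref{rem:limiting_wei} and \Cref{rem:limiting_pp}.

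For step (ii) I would use the real interpolation formula for Besov sequence spaces with common integrability index $p=2$, namely $\big(\bspace{s_0}2{q_0},\bspace{s_1}2{q_1}\big)_{\eta,r}=\bspace{(1-\eta)s_0+\eta s_1}2r$ for $s_0\neq s_1$ (see \cite{Bergh1976,Triebel2010}). With $s_0=-a$ and $s_1=\tilde s$ this gives $(\XA,K_1)_{\theta,\infty}=\bspace s 2\infty$ exactly for $\theta=\frac{(s+a)(q-1)}{aq}$, and the hypothesis $0<s<\frac{a}{q-1}$ gives $0<\theta<1$ (with $s<\frac{a}{q-1}$ equivalent to $\theta<1$). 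Choosing $\xi=\frac{q-1}{q}$ gives $(\XA,K_1)_{\xi,1}=\bspace 0 2 1\subset\bspace 0 2 2=\XL$, so the required embedding holds, while $\xi<\theta$ is equivalent to $s>0$; thus $0<\xi<\theta<1$.

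With all hypotheses of \Cref{thm:error_bounds} verified, step (iii) is substitution. The rate exponent is $\frac{\xi}{\theta}=\frac{(q-1)/q}{(s+a)(q-1)/(aq)}=\frac{a}{s+a}$, which yields $\bn{x-\xh}0 2 2\leq C\varrho^{a/(s+a)}\delta^{s/(s+a)}$, and inserting $u=q$ and $\theta$ into $\alpha\sim\varrho^{-(u-1)/\theta}\delta^{((1-\theta)(u-1)+\theta)/\theta}$ reproduces the stated exponents $-\frac{qa}{s+a}$ and $\frac{(2-q)s+2a}{s+a}$; the discrepancy-principle variant is inherited verbatim. The only genuinely delicate point is the identification of $K_1$ in step (i); steps (ii) and (iii) are interpolation bookkeeping and exponent arithmetic.
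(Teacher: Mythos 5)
Your proposal is correct and follows essentially the same route as the paper: compute $K_1=\bspace{\tilde s}2{2q-2}$ with $\varrho_1(x)\sim\bn x{\tilde s}2{2q-2}^{q-1}$ via the subdifferential and the dual norm of $\XA$ (the paper does this in \Cref{prop:varrho_1_besov_2q} using \Cref{lem:characterization_linear_case} and \Cref{prop:computation_rholin}, rather than \Cref{sec:equivalence} as you cite), identify the relevant real interpolation spaces on the Besov scale with $p=2$, and feed everything into \Cref{thm:error_bounds}. The only cosmetic difference is that you verify the embedding hypothesis via $(\XA,K_1)_{\xi,1}=\bspace 021\subset\bspace 022$, whereas the paper identifies $\bspace 022=(\XA,K_1)_{\xi,2}$ and uses monotonicity in the second interpolation index; the exponent arithmetic agrees in both.
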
 
\neww{
The \hyperref[pr:2q]{proof of Corollary 2.10} can by found in \Cref{sec:error_Banach}.}
\begin{remark} \label{rem:limiting_2q}
In the limiting case $s=\frac{a}{q-1}$ the result remains valid if one replaces $\bspace s 2\infty$ by $K_1=\bspace {\tilde{s}}2{\tilde{q}}$ with $\tilde{q}=2q-2$. Here we obtain ${\bn {x-\xh}0 2 2 \leq  C \varrho^\frac{q-1}{q}  \delta^\frac{1}{q}}$.
\end{remark} 
In contrast to the analysis in \cite{HM:19} we measure the error in the $\ell^2$-norm independent of the value of $q$, i.e. the error norm is not dictated \neww{by} the penalty term.\\
The smaller $q$ the larger is the region $0<s<\frac{a}{q-1}$ of regularity parameters for which we guarantee upper bounds. Furthermore we see that changing the fine index $q$ while keeping $p=2$ does not change the set where convergence rates are \neww{guaranteed}, but it influences the parameter choice rule.
\neww{ 
\subsubsection*{Example 4: Radon Transform} \label{ex:02q}
To give a more concrete example we discuss the Radon transform which appears as forward operator in computed tomography (CT) and positron emission tomography (PET). This example also shows how our results apply to operators initially defined on function spaces.\\
Let $d\in \mathbb{N}$ with $d\geq 2$, $\Omega:=\{x\in\mathbb{R}^d \colon |x|\leq 1 \}$, $S^{d-1}:=\{ x\in \mathbb{R}^d \colon |x|=1\} $ and $\Y:=L^2(S^{d-1}\times [-1,1])$. Then the Radon transform ${R:L^2(\Omega)\to L^2(S^{d-1}\times \mathbb{R})}$ is given by
\[
(Rf)(\theta,t) := \int_{x\cdot \theta = t} f(x)\,dx,\qquad \theta\in S^{d-1},
t\in\mathbb{R}.
\] 
With $a=\frac{d-1}{2}$ it follows from \cite[Thm.~3.1]{hertle:83} 
that $R$ is a norm isomorphism from ${B^{-a}_{2,2}(\Omega)}$ 
to $\Y$. Here ${B^{-a}_{2,2}(\Omega)}$ denotes a Besov function space. We refer to the book \cite{GN:15} for an introduction to this scale of function spaces. \\ 
Furthermore, with $\Lambda$ and the scale of spaces $\bspace spq$ as introduced in \hyperref[ex:0pp]{Example 2} and $\smax>a$ we consider a $\smax$-regular wavelet system $(\psi_\lambda)_{\lambda\in\Lambda}$ on $\Omega$ such that the synthesis operator \[ \wav \colon\bspace spq \rightarrow \Bspace spq \quad\text{ given by } x\mapsto \sum_{\lambda\in \Lambda} x_\lambda \psi_\lambda  \] 
is well defined and a norm isomorphism for all $s\in \mathbb{R}$ and $p,q\in (0,\infty]$ satisfing $s \in (\sigma_p-s_\textrm{max}, s_\textrm{max})$ with $\sigma_p=\max\left\{d\left(\frac{1}{p}-1\right), 0 \right\}$ (see \cite{triebel:08}).
Now for $\mathcal{R}$ as in \hyperref[ex:0pp]{Example 2} or \hyperref[ex:02q]{Example 3} we consider 
\begin{align}\label{eq:Tky_wav}
S_\alpha(g) = \wav \hat{x}_\alpha \quad\text{with}\quad  \hat{x}_\alpha \in \argmin_{x\in \dom(\mathcal{R})} \left( \frac{1}{2\alpha } \|\gobs - R\wav x\|_{\Y}^2 + \mathcal{R}(x) \right)
\end{align}
and obtain the following convergence rate results.
\begin{corollary}[Convergence rates for wavelet regularization of the Radon transform] \label{cor:radon}
\begin{enumerate}
\item Let $p\in (1,\infty)$. With the notation of \hyperref[ex:0pp]{Example 2} suppose $0<s< \min\{\overline{s}, \smax\}$  and $\delta, \varrho,  \alpha>0$, $c_r\geq c_l>0$, $C_D>c_D>1$. Assume $f\in \Bspace stt$ with $\Bn fstt \leq \varrho$ and $\fh\in S_\alpha(g^\delta)$ with $\mathcal{R}$ as in \hyperref[ex:0pp]{Example 2}.
Then there is a constant $C>0$ independent of $f$, $\delta$ and $\varrho$ such that whenever $\alpha$ satisfies either  
\[ c_l \varrho^{-\frac{pa}{s+a}} \delta^\frac{(2-p)s+2a}{s+a} \leq \alpha \leq c_r \varrho^{-\frac{pa}{s+a}} \delta^\frac{(2-p)s+2a}{s+a}
\quad\text{or}\quad
 c_D\delta\leq \|g^\delta- R\fh \|_\Y \leq  C_D \delta \] 
the bound 
\[ \Bn {f-\fh}0pp  \leq C \varrho^\frac{a}{s+a} \delta^\frac{s}{s+a}.\] 
holds true.
If $p\leq 2$ we also obtain the bound
\[ \| f-\fh \|_{L^p(\Omega)} \leq C \varrho^\frac{a}{s+a} \delta^\frac{s}{s+a}.\] 
\item Let $q\in (0,\infty).$ Suppose $0<s< \min\left\{\frac{a}{q-1}, \smax\right\}$ and $\delta, \varrho,  \alpha>0$, $c_r\geq c_l>0$, $C_D>c_D>1$. Assume $f\in \Bspace s2\infty$ with $\Bn fs2\infty \leq \varrho$ and $\fh\in S_\alpha(g^\delta)$ with $\mathcal{R}$ as in \hyperref[ex:02q]{Example 3}.
Then there is a constant $C>0$ independent of $f$, $\delta$ and $\varrho$ such that whenever $\alpha$ satisfies either  
\[ c_l \varrho^{-\frac{qa}{s+a}} \delta^\frac{(2-q)s+2a}{s+a} \leq \alpha \leq c_r \varrho^{-\frac{qa}{s+a}} \delta^\frac{(2-q)s+2a}{s+a}
\quad\text{or}\quad
 c_D\delta\leq \|g^\delta- R\fh \|_\Y \leq  C_D \delta \] 
the bound 
\[ \| f-\fh \|_{L^2(\Omega)} \leq C \varrho^\frac{a}{s+a} \delta^\frac{s}{s+a}.\] 
holds true.
\end{enumerate}
\end{corollary}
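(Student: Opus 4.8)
The plan is to reduce the function-space reconstruction problem to the sequence-space setting already treated in Examples 2 and 3 by absorbing the wavelet synthesis operator $\wav$ into the forward operator, and then to transport the source condition and the resulting error estimate back and forth across $\wav$, which is a norm isomorphism on every relevant Besov sequence space.

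First I would set $A:=R\wav$ and verify that $A$ satisfies \eqref{eq:XAequiA} with $\XA=\bspace{-a}{2}{2}$. Since $\smax>a$, the synthesis operator $\wav\colon\bspace{-a}{2}{2}\to\Bspace{-a}{2}{2}$ is a norm isomorphism, and by \cite{hertle:83} the Radon transform $R\colon\Bspace{-a}{2}{2}\to\Y$ is a norm isomorphism; composing the two yields a norm isomorphism $A\colon\bspace{-a}{2}{2}\to\Y$, which is precisely \eqref{eq:XAequiA}. Under this identification the minimization problem \eqref{eq:Tky_wav} is exactly the sequence-space Tikhonov problem for $A$ with the penalty $\mathcal{R}$ of Example 2 (resp.\ Example 3), so $\xh\in R_\alpha(g^\delta)$ in the sense of those examples and $\fh=\wav\xh$. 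Because the noise model reads $\|g^\delta-Rf\|_\Y=\|g^\delta-A(\wav\inv f)\|_\Y\leq\delta$ and the discrepancy term satisfies $\|g^\delta-R\fh\|_\Y=\|g^\delta-A\xh\|_\Y$, the apriori and a-posteriori parameter-choice hypotheses transfer verbatim. Moreover \Cref{ass:K_1_quasi_banach} holds here without extra work, since the penalty, the space $\XA$, and hence $K_1$ and $\varrho_1$ coincide with those already computed in Examples 2 and 3.

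Next I would transport the source condition through $\wav$ at smoothness level $s$. For part 1, $f\in\Bspace{s}{t}{t}$ with $\Bn{f}{s}{t}{t}\leq\varrho$; as $s<\smax$ the map $\wav\colon\bspace{s}{t}{t}\to\Bspace{s}{t}{t}$ is an isomorphism, so $x:=\wav\inv f\in\bspace{s}{t}{t}$ with $\bn{x}{s}{t}{t}\leq C\varrho$, and by \Cref{prop:nesting_pp} the continuous embedding $\bspace{s}{t}{t}\subset k_s$ gives $\|x\|_{k_s}\leq C\varrho$. Applying \Cref{thm:rates_besov_pp} to the sequence-space problem yields $\bn{x-\xh}{0}{p}{p}\leq C\varrho^{\frac{a}{s+a}}\delta^{\frac{s}{s+a}}$ under the stated parameter choices. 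Using once more that $\wav\colon\bspace{0}{p}{p}\to\Bspace{0}{p}{p}$ is an isomorphism together with $f-\fh=\wav(x-\xh)$ gives the Besov bound on $\Bn{f-\fh}{0}{p}{p}$, and for $p\leq2$ the standard embedding $\Bspace{0}{p}{p}\subset L^p(\Omega)$ produces the $L^p$-estimate. Part 2 is identical with \Cref{thm:rates_besov_2q} in place of \Cref{thm:rates_besov_pp}, $\bspace{s}{2}{\infty}$ in place of $k_s$, and the identification $\Bspace{0}{2}{2}=L^2(\Omega)$ (with equivalent norms) directly giving the $L^2$-bound.

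The main obstacle is the bookkeeping of the chain of norm equivalences: one must secure the wavelet isomorphism simultaneously at the smoothness levels $-a$, $0$ and $s$ (this is where $\smax>a$ and $s<\smax$, together with the admissibility range $s\in(\sigma_p-\smax,\smax)$, are used), and one must verify that every equivalence and embedding constant is absorbed into a single $C$ independent of $f,\delta,\varrho$. Once this is controlled, the statement follows by direct composition of the already-established Corollaries~\ref{thm:rates_besov_pp} and \ref{thm:rates_besov_2q} with the Besov embedding into $L^p(\Omega)$ for $p\leq2$.
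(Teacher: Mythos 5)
Your proposal is correct and follows essentially the same route as the paper's proof: absorb $\wav$ into the forward operator to land in the sequence-space setting of Examples 2 and 3, transport the source condition via the wavelet isomorphism at level $s$ and the embedding $\bspace{s}{t}{t}\subset k_s$ from \Cref{prop:nesting_pp}, apply \Cref{thm:rates_besov_pp} resp.\ \Cref{thm:rates_besov_2q}, and map the error back through $\wav$ and the embedding into $L^p(\Omega)$. The only point to be slightly more careful about is the isomorphism $\wav\colon\bspace{s}{t}{t}\to\Bspace{s}{t}{t}$, where (since $t$ may be less than $1$) the lower admissibility bound $s>\sigma_t-\smax$ must also be checked — the paper does this via $\frac{d}{p}-\frac{d}{2}\leq a$ implying $\sigma_t\leq s$ — but you flag exactly this issue yourself.
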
 
The \hyperref[pr:radon]{proof of Corollary 2.11} can be found in \Cref{sec:error_Banach}.
In \Cref{cor:radon}.1. it would also we sufficient to require $f\in \wav k_s$ instead of $f\in \Bspace stt$. Transfering the interpolation identity in \eqref{eq:k_s_interpol} to function spaces shows that $\wav k_s$ is independant of the choosen wavelet system (see \cite[Sec.~6.2]{MH:20} for a similar discussion). \\
In the same manner the presented theory can be applied to other linear, finitly smoothing forward operators as inverses of elliptic differential operators with smooth, periodic coefficients or specific periodic convolution operators (see \cite[Ex.~2.5]{HM:19} for more details).}
\subsection{Connections to source conditions}
Assuming only \Cref{ass:topological} we compare \eqref{eq:image_space_approx_bound} to source conditions used in the literature. 
For a concave and upper semi-continuous  function $\phi\colon [0,\infty)\rightarrow [0,\infty) $ we consider \emph{variational source conditions} of the form
\begin{align} \label{eq:vsc}
\mathcal{R}(x)-\mathcal{R}(z)\leq \phi(\|Ax-Az\|_\Y^2) \quad\text{ for all } z\in \X. 
\end{align}
In \cite{Kindermann2016} this condition is used to prove convergence rates with respect to the twisted Bregman distance of $\mathcal{R}$ and it is shown that the source condition \eqref{eq:classical_sc} implies \eqref{eq:vsc} with $\phi\sim \sqrt{\cdot}.$  In \cite{F:18} necessity of \eqref{eq:vsc} for convergence rates with respect to the twisted Bregman distance under a fixed parameter choice rule is proven. \\
Inspired by \cite{Hofmann2019} we also study the defect of the Tikhonov functional
\[  \sigma_x(\alpha):= T_\alpha(x,Ax)-T_\alpha(x_\alpha, Ax). \]
The following result shows that H\"older-type variational source conditions, H\"older-type bounds on the defect of the Tikhonov functional and H\"older type image space approximation rates are equivalent. 
\begin{thm}\label{thm:compare}
Let $\nu\in (\frac{1}{2},1]$. Assume $x\in \dom(\mathcal{R})$ is $\mathcal{R}$-minimal in $A\inv (\{ Ax \})$  and $x_\alpha \in R_\alpha(Ax)$ for $\alpha>0$ is any selection of a minimizers for exact data. The following statements are equivalent: 
\begin{romannum}
\item There exists a constant $c_1>0$ with $\|Ax-Ax_\alpha\|_\Y\leq c_1 \alpha^\nu$ for all $\alpha>0$. 
\item There exists a constant $c_2>0$ such that $\sigma_x(\alpha)\leq c_2 \alpha^{2\nu-1}.$
\item There exists a constant $c_3>0$ with \eqref{eq:vsc} holds true for $\phi(t)=c_3 t^\frac{2\nu-1}{2\nu} $.
\end{romannum} 
More precisely $(i)$ implies $(ii)$ with $c_2= \frac{c_1^2}{4\nu-2}$, $(ii)$ implies $(iii)$ with $c_3=2 c_2^\frac{1}{2\nu}$ and $(iii)$ implies $(i)$ with $c_1=c_3^\nu.$
\end{thm}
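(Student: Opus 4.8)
The plan is to prove the equivalence by establishing the cycle $(i)\Rightarrow(ii)\Rightarrow(iii)\Rightarrow(i)$, tracking constants along the way. Throughout I would write $r(\alpha):=\|Ax-Ax_\alpha\|_\Y$ and note that, since $Ax-Ax=0$, one has $T_\alpha(x,Ax)=\mathcal{R}(x)$, so that
\[ \sigma_x(\alpha)=\mathcal{R}(x)-T_\alpha(x_\alpha,Ax)=\mathcal{R}(x)-\mathcal{R}(x_\alpha)-\tfrac{1}{2\alpha}r(\alpha)^2 .\]
The minimizing property of $x_\alpha$, namely $T_\alpha(x_\alpha,Ax)\leq T_\alpha(z,Ax)$ for all $z\in\X$, will be the other recurring ingredient.

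For $(i)\Rightarrow(ii)$ I would study the optimal value as a function of $s=1/\alpha$. Setting
\[ F(s):=\min\nolimits_{z\in\X}\Big(\tfrac{s}{2}\|Ax-Az\|_\Y^2+\mathcal{R}(z)\Big)=T_{1/s}(x_{1/s},Ax), \]
$F$ is a pointwise infimum of affine functions of $s$, hence concave and non-decreasing, therefore locally Lipschitz and differentiable almost everywhere; at each point of differentiability $F'(s)=\tfrac12 r(1/s)^2$, because $\tfrac12\|Ax-Ax_{1/s}\|_\Y^2$ is a supergradient of $F$ at $s$ and a concave function has a single supergradient a.e. Using \Cref{ass:topological} together with the $\mathcal{R}$-minimality of $x$ gives the standard regularization limit $\lim_{s\to\infty}F(s)=\inf\{\mathcal{R}(z):Az=Ax\}=\mathcal{R}(x)$, so by the fundamental theorem of calculus
\[ \sigma_x(\alpha)=\mathcal{R}(x)-F(1/\alpha)=\int_{1/\alpha}^{\infty}\tfrac12 r(1/s)^2\,ds .\]
Inserting $r(1/s)\leq c_1 s^{-\nu}$ from $(i)$ and integrating — which converges precisely because $\nu>\tfrac12$ — yields $\sigma_x(\alpha)\leq \tfrac{c_1^2}{2(2\nu-1)}\alpha^{2\nu-1}$, i.e.\ $(ii)$ with $c_2=\tfrac{c_1^2}{4\nu-2}$.

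For $(ii)\Rightarrow(iii)$ I would fix an arbitrary $z\in\X$ and combine the identity $T_\alpha(x_\alpha,Ax)=\mathcal{R}(x)-\sigma_x(\alpha)$ with minimality $T_\alpha(x_\alpha,Ax)\leq \tfrac{1}{2\alpha}\|Ax-Az\|_\Y^2+\mathcal{R}(z)$ to obtain
\[ \mathcal{R}(x)-\mathcal{R}(z)\leq \tfrac{1}{2\alpha}\|Ax-Az\|_\Y^2+\sigma_x(\alpha)\leq \tfrac{1}{2\alpha}\|Ax-Az\|_\Y^2+c_2\alpha^{2\nu-1}.\]
This holds for every $\alpha>0$, so I would optimize the right-hand side over $\alpha$; choosing $\alpha$ so that the two summands coincide gives the value $(2c_2)^{1/(2\nu)}\|Ax-Az\|_\Y^{2(2\nu-1)/(2\nu)}$, and since $2^{1/(2\nu)}\leq 2$ for $\nu\in(\tfrac12,1]$ this is at most $c_3\|Ax-Az\|_\Y^{2(2\nu-1)/(2\nu)}$ with $c_3=2c_2^{1/(2\nu)}$, which is exactly $(iii)$ for $\phi(t)=c_3 t^{(2\nu-1)/(2\nu)}$.

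For $(iii)\Rightarrow(i)$ the decisive point is a sharpened lower bound on $\mathcal{R}(x)-\mathcal{R}(x_\alpha)$. Interpolating $z_t:=x_\alpha+t(x-x_\alpha)$, using convexity of $\mathcal{R}$ and $\|Ax-Az_t\|_\Y^2=(1-t)^2 r(\alpha)^2$, the inequality $T_\alpha(x_\alpha,Ax)\leq T_\alpha(z_t,Ax)$ rearranges to $\tfrac{r(\alpha)^2}{2\alpha}(2-t)\leq \mathcal{R}(x)-\mathcal{R}(x_\alpha)$, and letting $t\to0^+$ gives $\tfrac{r(\alpha)^2}{\alpha}\leq \mathcal{R}(x)-\mathcal{R}(x_\alpha)$ — note the factor $1/\alpha$ rather than $1/(2\alpha)$, which is what produces the clean constant. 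Applying $(iii)$ with $z=x_\alpha$ supplies the matching upper bound $\mathcal{R}(x)-\mathcal{R}(x_\alpha)\leq c_3 r(\alpha)^{(2\nu-1)/\nu}$, whence $\tfrac{r(\alpha)^2}{\alpha}\leq c_3 r(\alpha)^{(2\nu-1)/\nu}$; dividing by $r(\alpha)^{(2\nu-1)/\nu}$ and using $2-\tfrac{2\nu-1}{\nu}=\tfrac1\nu$ yields $r(\alpha)\leq (c_3\alpha)^\nu$, i.e.\ $(i)$ with $c_1=c_3^\nu$. I expect the main obstacle to lie in $(i)\Rightarrow(ii)$: rigorously identifying $F'$ with the squared residual almost everywhere, justifying the integral representation through local absolute continuity of the concave $F$, and validating $\lim_{s\to\infty}F(s)=\mathcal{R}(x)$ via \Cref{ass:topological}; the remaining two implications are elementary once the sharp convexity estimate of $(iii)\Rightarrow(i)$ and the balancing step of $(ii)\Rightarrow(iii)$ are isolated.
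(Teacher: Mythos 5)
Your proposal is correct, and all three constants come out exactly as stated in the theorem; the cycle $(i)\Rightarrow(ii)\Rightarrow(iii)\Rightarrow(i)$ and the three key estimates are the same as in the paper, but the technical underpinnings differ in an interesting way. For $(i)\Rightarrow(ii)$ the paper derives the integral representation $\sigma_x(\alpha)=\int_0^\alpha\frac{1}{2\beta^2}\|Ax-Ax_\beta\|_\Y^2\,\mathrm{d}\beta$ (which is your formula after the substitution $\beta=1/s$) from \emph{everywhere}-differentiability of the minimal value function, obtained via the identification $A\circ R_\alpha=\prox_{\alpha\mathcal{Q}}$ (\Cref{lem:prox_of_Q}) and Str\"omberg's derivative formula for Moreau envelopes (\Cref{prop:stomberg}); you get the same representation from nothing more than concavity of $F$ in $s=1/\alpha$, a.e.\ uniqueness of supergradients, and local Lipschitz continuity of concave functions. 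Your route is more elementary and self-contained; the paper's buys the exact identity $\sigma_x'(\alpha)=\frac{1}{2\alpha^2}\|Ax-Ax_\alpha\|_\Y^2$ at every $\alpha$, which it reuses elsewhere. For $(ii)\Rightarrow(iii)$ you carry out the $\alpha$-balancing by hand where the paper routes the same computation through the order-preserving bijection $\mathcal{F}$ of \Cref{lem:vsc} and \Cref{prop:app_F}; the extra machinery there identifies $\mathcal{F}(\sigma_x)$ as the \emph{minimal} admissible index function, which is not needed for this implication. For $(iii)\Rightarrow(i)$ you replace the first-order optimality condition $\frac{1}{\alpha}A^\ast A(x-x_\alpha)\in\partial\mathcal{R}(x_\alpha)$ by the convex-combination competitor $z_t=x_\alpha+t(x-x_\alpha)$ and let $t\searrow 0$, recovering the identical inequality $\frac{1}{\alpha}\|Ax-Ax_\alpha\|_\Y^2\leq\mathcal{R}(x)-\mathcal{R}(x_\alpha)$ without any subdifferential calculus. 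The only points that deserve a line of justification in a final write-up are the limit $\lim_{s\to\infty}F(s)=\mathcal{R}(x)$ (this is \Cref{app:minimal_limit}, and it genuinely uses the lower semicontinuity of $\mathcal{Q}$ guaranteed by \Cref{ass:topological} together with the $\mathcal{R}$-minimality of $x$) and the trivial case $\|Ax-Ax_\alpha\|_\Y=0$ before dividing in the last step.
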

\neww{  
We provide a \hyperref[pr:2q]{proof of Theorem 3} in \Cref{sec:error_Banach}.}
The result allows \neww{the following representation of $K_\nu$ in terms of variational source conditions:
\begin{align}\label{eq:k_nu_representation_vsc}
 K_\nu = \left\{ x\in \X \colon \text{There exists }c>0 \text{ such that }  \eqref{eq:vsc} \text{ with }\phi(t)=c t^\frac{2\nu-1}{2\nu} \text{holds true.} \right\}
\end{align}
for all $\nu\in \nu\in (\frac{1}{2},1]$.
Note that since the map $ (\frac{1}{2},1]\rightarrow (0,\frac{1}{2}]$ given by $\nu \mapsto \frac{2\nu-1}{2\nu}$ is bijective this characterization grasps all H\"older type functions $\phi(t)=\mathcal{O}(t^\mu)$ for $\mu\in (0,\frac{1}{2}]$. Due to \cite[Prop. 3]{Hofmann2019} the largest meaningful exponent is $\mu=\frac{1}{2}$. Furthermore, \eqref{eq:vsc} implies  $x\in\dom(\mathcal{R})$ and this in turn yields $(i)$ with $\nu=\frac{1}{2}$. Therefore, we cannot expect a characterization of  $(i)$ for $\nu<\frac{1}{2}$ by variational source conditions.
Hence all meaningful H\"older type variatonal source conditions of the form \eqref{eq:vsc} are covered in \Cref{thm:compare} and \eqref{eq:k_nu_representation_vsc}. In other words it is not possible to extend \Cref{thm:compare} to a larger set of exponents.}\\
Together with \Cref{thm:minimax} we see that H\"older-type variational source conditions imply upper bounds on the reconstruction error for any loss function \neww{given by the modulus of continuity.} In contrast as far as the author knows all upper bounds in the literature derived from \eqref{eq:vsc} are restricted to the twisted Bregman distance. 
\section{Minimax convergence rates on  \texorpdfstring{$K_\nu$}{}}  \label{sec:mini_max}
The aim of this section is to prove \Cref{thm:minimax}. \neww{Here we only assume the topological assumptions given in \Cref{ass:topological}.}
We will follow an idea presented in the seminal paper \cite{DJKP:95}: Any feasible procedure is nearly minimax (see \cite[4.3.1.]{DJKP:95}). 
In our context feasibility means
\begin{remunerate}
\item image space bounds:  $\|Ax - A\xh\|_\Y\leq c \delta$,
\item regularity of the minimizers:  $\varrho_\nu(\xh)\leq c \varrho_\nu (x)$ for some constant $c>0$. 
\end{remunerate}      
After proving feasibility we use the same argument as in \cite[4.3.1. Prop.~2]{DJKP:95} to obtain a nearly minimax result.
\subsection{Characterization of \texorpdfstring{$A \circ R_\alpha$}{} as proximity mapping} \label{sec:prox_mapping}
This subsection provides an important preliminary that we use in several places throughout the paper. We introduce a convex function $\mathcal{Q}$ on $\Y$ that can be seen as a push forward of $\mathcal{R}$ \neww{through} the linear operator $A$. We show that the proximity mapping of $\alpha \mathcal{Q}$ equals $A\circ R_\alpha$.
Recall that for a convex, proper and lower semi-continuous function $\mathcal{Q}\colon\Y \rightarrow (-\infty,\infty]$ and $g\in \Y$ there is a unique minimizer $\prox_\mathcal{Q}(g)$ of the function $y\mapsto \frac{1}{2} \| g -y  \|_\Y^2  + \mathcal{Q}(y)$. The \neww{single-valued} mapping \[ \prox_{\mathcal{Q}}\colon \Y\rightarrow \Y \quad\text{given by} \quad g\mapsto  \prox_{\mathcal{Q}}(g) \neww {:=\argmin_{y\in \Y}\left(\frac{1}{2} \| g -y  \|_\Y^2  + \mathcal{Q}(y) \right)}\]  is called proximity mapping of $Q$ (see \cite[11.4, Def.~12.23]{Bauschke}).
\begin{lemma}\label{prop:introduceQ}
We define 
\[ \mathcal{Q}\colon \Y\rightarrow (-\infty,\infty] \quad\text{by}\quad \mathcal{Q}(g):=\inf \{ \mathcal{R}(x) \, \colon \, \neww{x\in \X} \text{ with } Ax=g \}  \] with $\inf \emptyset=\infty$. Then $\mathcal{Q}$ is convex, proper and lower semi-continuous, and we have $\dom(\mathcal{Q})=A (\dom(\mathcal{R}))$. 
\end{lemma}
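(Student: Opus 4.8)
The plan is to treat $\mathcal{Q}$ as the infimal projection (image) of the convex function $\mathcal{R}$ under the linear map $A$ and to establish the four asserted properties in turn, the only delicate point being lower semi-continuity. First I would record the domain identity: by definition $\mathcal{Q}(g)<\infty$ holds exactly when the fibre $A\inv(\{g\})$ meets $\dom(\mathcal{R})$, i.e. when $g=Ax$ for some $x$ with $\mathcal{R}(x)<\infty$, which is precisely $g\in A(\dom(\mathcal{R}))$. Since $\mathcal{R}$ is proper this domain is nonempty, so $\mathcal{Q}$ is not identically $+\infty$.

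For properness I still need $\mathcal{Q}>-\infty$ everywhere, which reduces to $\mathcal{R}$ being bounded below. I would deduce this from the compactness hypothesis in \Cref{ass:topological}: if $\inf\mathcal{R}=-\infty$, then the sublevel sets $\{x\in\X\colon\mathcal{R}(x)\le\lambda\}$ are nonempty for every $\lambda\in\mathbb{R}$, $\tau$-compact, and decreasing in $\lambda$; by the finite intersection property their total intersection is nonempty, and any point in it would satisfy $\mathcal{R}(x)=-\infty$, contradicting $\mathcal{R}\colon\X\to(-\infty,\infty]$. Hence $\mathcal{R}\ge m$ for some $m\in\mathbb{R}$ and therefore $\mathcal{Q}\ge m$ on its domain. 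Convexity is then a direct computation: for $g_0,g_1\in\dom(\mathcal{Q})$ (the inequality being trivial if one point lies outside the domain), $\varepsilon>0$, and near-optimal $x_i\in\dom(\mathcal{R})$ with $Ax_i=g_i$ and $\mathcal{R}(x_i)\le\mathcal{Q}(g_i)+\varepsilon$, the convex combination $x_\lambda=\lambda x_0+(1-\lambda)x_1$ satisfies $Ax_\lambda=\lambda g_0+(1-\lambda)g_1$ by linearity and $\mathcal{R}(x_\lambda)\le\lambda\mathcal{R}(x_0)+(1-\lambda)\mathcal{R}(x_1)$ by convexity, so testing $\mathcal{Q}$ at $\lambda g_0+(1-\lambda)g_1$ with $x_\lambda$ and letting $\varepsilon\to0$ yields the convexity inequality.

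The crux, and where I expect the main obstacle, is lower semi-continuity: infimal projections of lsc functions are in general not lsc, and the compactness of the sublevel sets of $\mathcal{R}$ together with the $\tau$-to-weak continuity of $A$ is exactly what rescues the argument. I would prove that every sublevel set of $\mathcal{Q}$ is closed via the identity
\[ \{g\in\Y\colon\mathcal{Q}(g)\le\lambda\}=A\bigl(\{x\in\X\colon\mathcal{R}(x)\le\lambda\}\bigr). \]
The inclusion ``$\supseteq$'' is immediate, since $g=Ax$ with $\mathcal{R}(x)\le\lambda$ forces $\mathcal{Q}(g)\le\mathcal{R}(x)\le\lambda$. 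For ``$\subseteq$'' I use attainment of the defining infimum: as noted after \Cref{ass:topological}, for $g\in\im(A)$ there is an $\mathcal{R}$-minimal $x$ with $Ax=g$ — the fibre $A\inv(\{g\})$ is $\tau$-closed because $A$ is $\tau$-to-weak continuous and singletons are weakly closed, its intersection with a compact sublevel set is $\tau$-compact, and the lsc $\mathcal{R}$ attains its minimum on this compact set. Hence $\mathcal{Q}(g)\le\lambda$ produces a witness $x$ with $Ax=g$ and $\mathcal{R}(x)=\mathcal{Q}(g)\le\lambda$.

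To conclude, the set $\{x\in\X\colon\mathcal{R}(x)\le\lambda\}$ is $\tau$-compact by assumption, and $A$ is $\tau$-to-weak continuous, so its image $A(\{\mathcal{R}\le\lambda\})$ is weakly compact in $\Y$, hence weakly closed (the weak topology of a Hilbert space is Hausdorff, so weakly compact sets are weakly closed) and a fortiori strongly closed, since the strong topology is finer. Thus all sublevel sets of $\mathcal{Q}$ are closed in $\Y$, which is precisely lower semi-continuity of $\mathcal{Q}$, completing the proof.
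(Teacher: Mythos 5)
Your proposal is correct, and its core — the sublevel‑set identity $\{g\colon\mathcal{Q}(g)\le\lambda\}=A(\{x\colon\mathcal{R}(x)\le\lambda\})$, proved via attainment of the defining infimum on the $\tau$-compact intersection of a fibre with a sublevel set, followed by the observation that the continuous image of a $\tau$-compact set is weakly compact, hence closed — is exactly the paper's argument for lower semi-continuity and for the domain identity. The one place you genuinely diverge is convexity: the paper infers it from the convexity of the sublevel sets $L_\lambda$, which strictly speaking only yields quasiconvexity, whereas you give the standard infimal-projection computation with $\varepsilon$-optimal witnesses $x_i$ and the convex combination $x_\lambda$; your route is the more careful one and closes that small gap. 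Your treatment of properness (boundedness below of $\mathcal{R}$ via the finite intersection property of the nested compact sublevel sets) is also slightly more explicit than the paper's one-line "proper as $\mathcal{R}$ is proper", though the paper could equally well appeal to attainment of the infimum to see $\mathcal{Q}>-\infty$. In short: same key lemma and same compactness mechanism, with a more robust handling of the two easy properties.
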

\begin{proof}
Let $\lambda \in \mathbb{R}$. First we prove that $L_\lambda:=\{g\in \neww{\Y}\,\colon\, \mathcal{Q}(g)\leq \lambda\}$ satisfies 
\begin{align*} 
 L_\lambda=A (\{\neww{x\in \X} \,\colon \, \mathcal{R}(x)\leq \lambda \}).
\end{align*}
To this end let $g\in L_\lambda$. There exists \neww{$x\in \X$} with $Ax=g$ and $\mathcal{R}(x)\leq \mathcal{R}(z)$ for all \neww{$z\in \X$} with $Az=g$. Then $\mathcal{R}(x)=\mathcal{Q}(g)\leq \lambda.$ On the other hand if \neww{$x\in \X$} with $\mathcal{R}(x)\leq \lambda$ then $\mathcal{Q}(Ax)\leq \mathcal{R}(x)\leq \lambda$.\\
Taking union over $\lambda\in \mathbb{R}$ yields $\dom(\mathcal{Q})= A(\dom(\mathcal{R})).$ Hence $\mathcal{Q}$ is proper as $\mathcal{R}$ is proper. 
The sublevel sets $L_\lambda$ are convex as the image of a convex set under a linear map and closed as the image of a $\tau$-compact set under a $\tau$-to-weak continuous map. Hence $\mathcal{Q}$ is convex and lower semi-continuous.
\end{proof}
\begin{remark} 
Note that in the case of an injective forward operator $A$, the map $\mathcal{Q}$ is given by $\mathcal{Q}(g)=\mathcal{R}(A\inv g)$ if $g\in \im(A)$ and $\mathcal{Q}(g)=\infty$ if $g\in \Y\setminus \im(A)$ where $A\inv \colon \im(A)\rightarrow \X$ denotes the inverse map of $A$. 
\end{remark} 

\begin{proposition}\label{lem:prox_of_Q}
Let $g\in \Y$ and $\alpha>0.$ Then 
\[ A \xh = \prox_{\alpha\mathcal{Q}}(g)  \quad\text{and}\quad \mathcal{R}(\xh)= \mathcal{Q}(\prox_{\alpha\mathcal{Q}}(g)) \quad\text{for all}\quad \xh\in R_\alpha(g). \] 
In particular $A\circ R_\alpha= \prox_{\alpha\mathcal{Q}}$ is single-valued. Hence $A\xh$ and $\mathcal{R}(\xh)$ do not depend on the particular choice of $\xh\in R_\alpha(g)$. 
\end{proposition}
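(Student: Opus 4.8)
The plan is to exploit the scaling identity $\alpha T_\alpha(x,g) = \frac{1}{2}\|g-Ax\|_\Y^2 + \alpha\mathcal{R}(x)$, which shows that $R_\alpha(g)$ consists exactly of the minimizers of $x\mapsto \frac{1}{2}\|g-Ax\|_\Y^2 + \alpha\mathcal{R}(x)$, and to recognize this objective as the composition of the fibre-wise infimum defining $\mathcal{Q}$ with the quadratic data term. Since \Cref{prop:introduceQ} guarantees that $\mathcal{Q}$ is convex, proper and lower semi-continuous on the Hilbert space $\Y$, the proximity mapping $\prox_{\alpha\mathcal{Q}}$ is well defined and single-valued, so it suffices to identify $A\xh$ with the unique minimizer of $F(y):=\frac{1}{2}\|g-y\|_\Y^2 + \alpha\mathcal{Q}(y)$.

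First I would fix $\xh\in R_\alpha(g)$, which exists by \Cref{ass:topological} and the finite intersection property, and set $\hat y := A\xh$. Because $A\xh=\hat y$ gives $\mathcal{Q}(\hat y)\leq \mathcal{R}(\xh)<\infty$, the point $\hat y$ lies in $\dom(\mathcal{Q})$ and $F(\hat y)<\infty$. To show that $\hat y$ minimizes $F$, I would take an arbitrary $y\in\Y$; the case $\mathcal{Q}(y)=\infty$ is trivial, and for $y\in\dom(\mathcal{Q})$ I would, given $\varepsilon>0$, choose $x_\varepsilon$ with $Ax_\varepsilon=y$ and $\mathcal{R}(x_\varepsilon)\leq \mathcal{Q}(y)+\varepsilon$. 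Using $\mathcal{Q}(\hat y)\leq \mathcal{R}(\xh)$, the minimality of $\xh$, and then the choice of $x_\varepsilon$, I would chain
\[ F(\hat y)\leq \tfrac{1}{2}\|g-\hat y\|_\Y^2 + \alpha\mathcal{R}(\xh) = \alpha T_\alpha(\xh,g)\leq \alpha T_\alpha(x_\varepsilon,g)\leq \tfrac{1}{2}\|g-y\|_\Y^2 + \alpha\bigl(\mathcal{Q}(y)+\varepsilon\bigr), \]
and let $\varepsilon\to 0$ to obtain $F(\hat y)\leq F(y)$. Hence $\hat y$ minimizes $F$, and uniqueness of the proximal point yields $A\xh = \hat y = \prox_{\alpha\mathcal{Q}}(g)$.

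For the second identity I would argue by contradiction from the minimality of $\xh$: if $\mathcal{Q}(\hat y)<\mathcal{R}(\xh)$, then by the definition of $\mathcal{Q}$ as an infimum there exists $x'$ with $Ax'=\hat y$ and $\mathcal{R}(x')<\mathcal{R}(\xh)$; since $Ax'=A\xh$ the data terms coincide, so $T_\alpha(x',g)<T_\alpha(\xh,g)$, contradicting $\xh\in R_\alpha(g)$. Therefore $\mathcal{R}(\xh)=\mathcal{Q}(\hat y)=\mathcal{Q}(\prox_{\alpha\mathcal{Q}}(g))$. Finally, since both identities hold for every $\xh\in R_\alpha(g)$ and their right-hand sides depend only on $g$ and $\alpha$, both $A\xh$ and $\mathcal{R}(\xh)$ are independent of the chosen minimizer, giving $A\circ R_\alpha=\prox_{\alpha\mathcal{Q}}$ as a single-valued map.

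I expect the only real subtlety to be the fibre decomposition combined with the possible non-attainment of the infimum defining $\mathcal{Q}(y)$; this is precisely what the $\varepsilon$-approximation in the chained inequality is designed to circumvent, so that no lower-semicontinuity or attainment argument for $\mathcal{Q}$ beyond \Cref{prop:introduceQ} is required.
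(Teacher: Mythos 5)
Your proof is correct and follows essentially the same route as the paper: both identify $A\xh$ as the minimizer of $y\mapsto\frac{1}{2}\|g-y\|_\Y^2+\alpha\mathcal{Q}(y)$ by comparing against an arbitrary $y\in\dom(\mathcal{Q})$ through the fibre decomposition and the minimality of $\xh$ for the Tikhonov functional. The only cosmetic differences are that the paper uses the attained $\mathcal{R}$-minimizer over the fibre (available from the compactness in \Cref{ass:topological}) where you use an $\varepsilon$-approximation, and it obtains $\mathcal{R}(\xh)=\mathcal{Q}(A\xh)$ by inserting $v=A\xh$ into the same chain of inequalities rather than by your separate contradiction argument.
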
 
\begin{proof}
Let $v \in \dom(\mathcal{Q})$. By \Cref{prop:introduceQ} we have $v\in \im(A)$. There exists \neww{$z\in \X$} with $Az= v$ and $\mathcal{R}(z)\leq \mathcal{R}(y)$ for all \neww{$y\in \X$} with $Ay=v$. By definition of $\mathcal{Q}$ that is $\mathcal{R}(z)=\mathcal{Q}(v)$. The first identity follows from
\begin{align*}
\frac{1}{2\alpha} \|g-A\xh \|_\Y^2 +  \mathcal{Q}(A\xh)&  \leq \frac{1}{2\alpha} \|g-A\xh \|_\Y^2 +  \mathcal{R}(\xh) \\
&  \leq\frac{1}{2\alpha} \|g-Az \|_\Y^2 +  \mathcal{R}(z) \\
&  =\frac{1}{2\alpha}  \|g-v \|_\Y^2 + \mathcal{Q}(v).  
\end{align*}
Inserting $v=A\xh$ yields $\mathcal{R}(\xh)=\mathcal{Q}(A\xh)=\mathcal{Q}\left( \prox_{\alpha\mathcal{Q}}(g)\right)$.
\end{proof}
The statement in \Cref{lem:prox_of_Q} can be read as follows: the function $\mathcal{Q}$ on $\Y$ stores all relevant information on $\mathcal{R}$ and $A$ to recover the mapping $A\circ R_\alpha$ in one object. Note that the definition of  $K_\nu$ can be rephrased only in terms of $\mathcal{Q}$.   
\begin{remark}\label{rem:defect_tik_vs_R} 
Suppose $x\in \dom(\mathcal{R})$, $\alpha>0$ and $x_\alpha\in R_\alpha(Ax)$. In \cite{Hofmann2019} the authors study upper bounds on $\mathcal{R}(x)- \mathcal{R}(x_\alpha)$ (defect for penalty) and on $\sigma_x(\alpha)$ (defect for Tikhonov functional) in terms of $\alpha$. The first quantity bounds the second and it is bounded by the double of the second (see \cite[Prop.~2.4]{Hofmann2019}). In \cite[Rem.~2.5]{Hofmann2019} the authors rely on this nesting to argue that changing the selection of minimizers changes the defect for penalty at most by a factor of $2$.
\Cref{lem:prox_of_Q} actually shows that the defect for penalty is independent of the choice of $x_\alpha\in R_\alpha(Ax)$. 
\end{remark} 
Exploiting firm non-expansiveness (see \cite[Def.~4.1]{Bauschke}) of proximal operators we draw a further conclusion of \Cref{lem:prox_of_Q}.
\begin{corollary}[Firm non-expansiveness]  \label{prop:firmly}
Let $g,h \in \Y$, $\alpha>0$, $\xh\in R_\alpha(g)$ and  $\hat{z}_\alpha\in R_\alpha(h)$. Then   
\[ \|(g-A\xh) - (h-A\hat{z}_\alpha)\|_{\neww{\Y}}^2+ \|A\xh - A\hat{z}_\alpha\|_\Y^2 \leq \|g-h\|_\Y^2. \] 
\end{corollary}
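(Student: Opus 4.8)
The plan is to deduce the inequality directly from the identification $A\circ R_\alpha = \prox_{\alpha\mathcal{Q}}$ established in \Cref{lem:prox_of_Q}, combined with the firm non-expansiveness of proximity operators of convex, proper, lower semi-continuous functions. Everything reduces to a literal transcription once the proximity-operator representation is in place.

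First I would invoke \Cref{lem:prox_of_Q} to write $A\xh = \prox_{\alpha\mathcal{Q}}(g)$ and $A\hat{z}_\alpha = \prox_{\alpha\mathcal{Q}}(h)$. These are genuine single-valued evaluations of the proximity operator and, in particular, do not depend on the chosen minimizers $\xh\in R_\alpha(g)$ and $\hat{z}_\alpha\in R_\alpha(h)$, which is exactly what makes the right-hand side of the claimed bound well posed. Next I would check that $\alpha\mathcal{Q}$ is a legitimate argument for the proximity operator: by \Cref{prop:introduceQ} the function $\mathcal{Q}$ is convex, proper and lower semi-continuous, and multiplication by the positive scalar $\alpha$ preserves all three properties, so $\prox_{\alpha\mathcal{Q}}$ is well defined and single-valued on all of $\Y$.

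The key step is to apply the standard fact (see \cite[Def.~4.1]{Bauschke} and the accompanying characterization) that the proximity operator $F := \prox_{\alpha\mathcal{Q}}$ of a convex, proper, lower semi-continuous function on a Hilbert space is firmly non-expansive, i.e.\ satisfies
\[ \snorm{Fg - Fh}_\Y^2 + \snorm{(\id - F)g - (\id - F)h}_\Y^2 \leq \snorm{g-h}_\Y^2 \]
for all $g,h\in\Y$. Substituting $Fg = A\xh$, $Fh = A\hat{z}_\alpha$, $(\id - F)g = g - A\xh$ and $(\id - F)h = h - A\hat{z}_\alpha$ reproduces precisely the asserted inequality.

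I do not expect a genuine obstacle here: once the identification of \Cref{lem:prox_of_Q} is in hand, the statement is an immediate consequence of a textbook property of proximity mappings. The only point requiring a little care is to confirm that the cited characterization of firm non-expansiveness is being used in the squared two-term form displayed above, rather than in one of its equivalent monotonicity or averaged-operator formulations, so that the match with the desired inequality is literal and no further estimation is needed.
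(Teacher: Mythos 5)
Your proposal is correct and follows exactly the paper's own argument: identify $A\circ R_\alpha=\prox_{\alpha\mathcal{Q}}$ via \Cref{lem:prox_of_Q} (with $\mathcal{Q}$ convex, proper and lower semi-continuous by \Cref{prop:introduceQ}) and then invoke the firm non-expansiveness of proximity operators in its squared two-term form, which the paper cites as \cite[Prop.~12.27]{Bauschke}. No gaps.
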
 
\begin{proof}
By \cite[Prop. ~12.27]{Bauschke} \neww{ the proximity operator $\prox_{\alpha\mathcal{Q}}$ satisfies 
\[ \|(g-\prox_{\alpha\mathcal{Q}}(g)) - (h-\prox_{\alpha\mathcal{Q}}(h))\|_\Y^2+ \|\prox_{\alpha\mathcal{Q}}(g) - \prox_{\alpha\mathcal{Q}}(h)\|_\Y^2 \leq \|g-h\|_\Y^2 \] 
for all $g,h\in \Y$. Inserting the first identity in \Cref{lem:prox_of_Q} yields the claim. }
\end{proof}

%
\subsection{Properties of the sets \texorpdfstring{$K_\nu$}{} }
The following proposition captures properties of the sets $K_\nu$. In particular, we show that $K_\nu$ is nontrivial for $\nu\in (0,1]$.
\begin{lemma}\label{lem:K_nu_props}
We have
\begin{remunerate}
\item  \neww{$K_0=\X$}.
\item  $K_{\nu_2}\subset K_{\nu_1}$ for $0\leq \nu_1\leq \nu_2$. 
\item  $K_\nu=\argmin_{z\in \neww{\X}} \mathcal{R}(z)+\ker(A)$ for all $\nu>1$.
\item  $\dom(\mathcal{R})+\ker(A)\subseteq K_{1/2}$. 
\end{remunerate}
\end{lemma}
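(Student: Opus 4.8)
The four assertions concern only the single-valued image-space quantity $d(\alpha):=\|Ax-Ax_\alpha\|_\Y$, which by \Cref{lem:prox_of_Q} equals $\|g-\prox_{\alpha\mathcal Q}(g)\|_\Y$ with $g:=Ax$ and $\mathcal Q$ as in \Cref{prop:introduceQ}; this is exactly why the choice of $x_\alpha\in R_\alpha(Ax)$ is immaterial in the definition of $\varrho_\nu$. The plan is to reduce everything to elementary estimates for the proximal point $y_\alpha:=\prox_{\alpha\mathcal Q}(g)$. As a preliminary I record that, under \Cref{ass:topological}, the proper lower semi-continuous $\mathcal R$ attains its infimum on its (compact) sublevel sets, so $\argmin\mathcal R\neq\emptyset$; and for $z^\ast\in\argmin\mathcal R$ the image $Az^\ast$ minimizes $\mathcal Q$ and is therefore a fixed point of every $\prox_{\alpha\mathcal Q}$, equivalently $z^\ast\in R_\alpha(Az^\ast)$.

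\textbf{Parts (i) and (ii).} First I would apply the firm non-expansiveness of \Cref{prop:firmly} to the pair $g=Ax$ and $h=Az^\ast$, using that $z^\ast\in R_\alpha(Az^\ast)$, to obtain the $\alpha$-independent bound $d(\alpha)\le\|Ax-Az^\ast\|_\Y=:D$. This yields $\varrho_0(x)\le D<\infty$ for every $x$, hence $K_0=\X$. For the nesting, assume $x\in K_{\nu_2}$ and split the supremum defining $\varrho_{\nu_1}(x)$ at $\alpha=1$: for $\alpha\le1$ use $\alpha^{-\nu_1}d(\alpha)\le\varrho_{\nu_2}(x)\,\alpha^{\nu_2-\nu_1}\le\varrho_{\nu_2}(x)$ since $\nu_2\ge\nu_1$, and for $\alpha\ge1$ use $\alpha^{-\nu_1}d(\alpha)\le D$. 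Thus $\varrho_{\nu_1}(x)\le\max\{\varrho_{\nu_2}(x),D\}<\infty$, so $K_{\nu_2}\subset K_{\nu_1}$.

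\textbf{Part (iv).} For $x=x_0+n$ with $x_0\in\dom\mathcal R$ and $n\in\ker A$ one has $g=Ax_0\in\dom\mathcal Q$. Comparing $y_\alpha$ with the admissible competitor $y=g$ in the minimization defining $\prox_{\alpha\mathcal Q}(g)$ gives $\tfrac{1}{2\alpha}d(\alpha)^2\le\mathcal Q(g)-\mathcal Q(y_\alpha)\le\mathcal Q(g)-\min\mathcal Q$, whence $d(\alpha)\le\sqrt{2(\mathcal Q(g)-\min\mathcal Q)}\,\alpha^{1/2}$ and therefore $\varrho_{1/2}(x)<\infty$, i.e. $x\in K_{1/2}$.

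\textbf{Part (iii), the main obstacle.} The inclusion $\argmin\mathcal R+\ker A\subseteq K_\nu$ is immediate, since for such $x$ we have $g=Ax\in\argmin\mathcal Q$, hence $y_\alpha=g$ and $d(\alpha)\equiv0$, so $\varrho_\nu(x)=0$ for all $\nu$. The reverse inclusion is the heart of the lemma and the step I expect to be hardest. Here I would use the first-order optimality of the proximal point, $v_\alpha:=\alpha^{-1}(g-y_\alpha)\in\partial\mathcal Q(y_\alpha)$, so that $\|v_\alpha\|_\Y=d(\alpha)/\alpha$. If $x\in K_\nu$ with $\nu>1$, then $\|v_\alpha\|_\Y\le\varrho_\nu(x)\,\alpha^{\nu-1}\to0$ as $\alpha\to0$, while $y_\alpha=g-\alpha v_\alpha\to g$. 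The crux is to convert this image-space rate into a statement about $g$ itself: I would invoke the closedness of the graph of the subdifferential $\partial\mathcal Q$ (valid for the proper, lower semi-continuous, convex $\mathcal Q$ produced by \Cref{prop:introduceQ}) to pass to the limit and conclude $0\in\partial\mathcal Q(g)$, i.e. $g\in\argmin\mathcal Q$. Since $\min\mathcal Q=\min\mathcal R$ is attained at an $\mathcal R$-minimal preimage of $g$, this means $Ax=Az^\ast$ for some $z^\ast\in\argmin\mathcal R$, i.e. $x\in\argmin\mathcal R+\ker A$. The delicate point throughout is keeping track of whether $g$ lies in $\dom\mathcal Q$; the subgradient closed-graph argument has the advantage of handling the non-minimizing cases $g\notin\overline{\dom\mathcal Q}$ and $g\in\overline{\dom\mathcal Q}\setminus\dom\mathcal Q$ uniformly, as all of them force $\|v_\alpha\|_\Y\not\to0$.
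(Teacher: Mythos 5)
Your proof is correct and follows essentially the same route as the paper's: reduction to an $\alpha$-independent bound via comparison with a global minimizer of $\mathcal{R}$ for (i)--(ii), the subgradient inclusion $\alpha^{-1}(Ax-Ax_\alpha)\in\partial\mathcal{Q}(Ax_\alpha)$ together with graph-closedness of $\partial\mathcal{Q}$ for (iii), and comparison with a feasible competitor for (iv). The only cosmetic differences are that you invoke firm non-expansiveness and a split of the supremum at $\alpha=1$ where the paper uses a direct comparison of Tikhonov values and a multiplicative interpolation inequality, and that you phrase (iv) in terms of $\mathcal{Q}$ rather than $\mathcal{R}$.
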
 
\begin{proof}\hspace{-1pt}
\begin{remunerate} 
\item Let $x\in \X$. We set $D_x:= \inf\left\{ \|Ax-Ay\|_\Y \colon y\in \argmin_{z\in \neww{\X}} \mathcal{R}(z) \right\}.$ Let ${y\in \argmin_{z\in \neww{\X}} \mathcal{R}(z)}$, $\alpha>0$ and $x_\alpha\in R_\alpha(Ax)$. Then 
\[ \frac{1}{2\alpha} \|Ax - Ax_\alpha\|_\Y^2 +\mathcal{R}(x_\alpha)\leq \frac{1}{2\alpha} \|Ax - Ay \|_\Y^2 +\mathcal{R}(y). \]
As $\mathcal{R}(y)\leq \mathcal{R}(x_\alpha)$ this implies $\|Ax - Ax_\alpha\|_\Y \leq  \|Ax - Ay \|_\Y.$ Hence $\varrho_0(x)\leq D_x<\infty$. 
\item Suppose $x\in K_{\nu_2}$. Then 
\begin{align*}
\|Ax-Ax_\alpha\|_\Y & = \|Ax-Ax_\alpha\|_\Y^\frac{\nu_1}{\nu_2}  \|Ax-Ax_\alpha\|_\Y ^{1- \frac{\nu_1}{\nu_2}} \leq  \varrho_{\nu_2}(x)^\frac{\nu_1}{\nu_2} \varrho_0(x)^{1- \frac{\nu_1}{\nu_2}} \alpha^{\nu_1}. 
\end{align*}
implies $\varrho_{\nu_1}(x)\leq \varrho_{\nu_2}(x)^\frac{\nu_1}{\nu_2} \varrho_0(x)^{1- \frac{\nu_1}{\nu_2}} $.
\item Let $\nu>1$. Suppose $x\in K_\nu$. From \cite[Prop.~16.34]{Bauschke} and \Cref{lem:prox_of_Q} we obtain \[ \eta_\alpha:=\frac{1}{\alpha}\left( Ax-Ax_\alpha\right)=\frac{1}{\alpha}\left( Ax- \prox_{\alpha \mathcal{Q}}(Ax) \right)\in \partial\mathcal{Q}(Ax_\alpha).\]
Since $\eta_\alpha\rightarrow 0$ and $Ax_\alpha \rightarrow Ax$ for $\alpha\rightarrow 0$ in the norm topology of $\Y$ this implies $0\in \partial\mathcal{Q}(Ax)$. Hence $Ax\in \argmin_{g\in \Y}\mathcal{Q}(g)$. Let $\neww{y\in \X}$ be $\mathcal{R}$-minimal with $Ay=Ax.$ Then \[ \mathcal{R}(y)= \mathcal{Q}(Ax)\leq \mathcal{Q}(Az)\leq \mathcal{R}(z) \quad \text{for all }  \neww{z\in \X} .\]  Hence 
\[ x= y+ x-y  \in \argmin\nolimits_{z\in \neww{\X}} \mathcal{R}(z)+\ker(A).\] 
On the other hand assume $x=y + k \in\argmin_{z\in \neww{\X}} \mathcal{R}(z)+\ker(A)$. Then 
\[ \frac{1}{2\alpha} \|Ax-Ax_\alpha\|_\Y^2 +\mathcal{R}(x_\alpha) \leq T_\alpha(y, Ax) = \mathcal{R}(y) \leq \mathcal{R}(x_\alpha) \]
yields $\|Ax-Ax_\alpha\|_\Y=0.$ Hence $x\in K_\nu$.  
\item Let $x=y+k\in \dom(\mathcal{R})+\ker(A)$. From \[ \frac{1}{2\alpha} \|Ax-Ax_\alpha\|_\Y^2 \leq T_\alpha(x_\alpha,Ax)\leq T_\alpha(y,Ax)=\mathcal{R}(y)\] we obtain 
$\varrho_{1/2}(x)\leq \sqrt{2\mathcal{R}(y)}$. 
\end{remunerate}
\end{proof}
The set $K_\nu$ does not change for $\nu>1.$ As announced in \Cref{sec:main_results_banach} we will see that $K_1$ is the set of elements satisfying source condition \eqref{eq:classical_sc}.\\
Moreover note that the last inequality in the proof of \Cref{lem:K_nu_props}$.2.$ resembles an interpolation inequality. This gives a first hint to a connection to  interpolation theory in the case of Banach space regularization. 

\subsection{Image space bounds}
This subsection is devoted to error bounds in the image space $\Y$ in terms of the deterministic noise level and the image space approximation error for exact data. Let $\delta\geq 0$, \neww{$x\in \X$} and $g^\delta\in \Y$ with $\|g^\delta- Ax \|_\Y\leq \delta$. 
\begin{lemma}\label{prop:image_space_noisebounds} 
The following inequalities 
\begin{align}\label{eq:imbound_Ax}
\|Ax-A\hat{x}_\alpha \|_\Y & \leq \delta + \|Ax-Ax_\alpha\|_\Y,\\
\label{eq:imbound_g_delta}
\|g^\delta-A\hat{x}_\alpha\|_\Y &\leq \delta +\|Ax-Ax_\alpha\|_\Y 
\end{align}
hold true for all $\alpha>0$, $\hat{x}_\alpha\in R_\alpha(g^\delta)$, $x_\alpha\in R_\alpha(Ax)$. 
\end{lemma}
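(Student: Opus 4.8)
The plan is to derive both inequalities from the firm non-expansiveness recorded in \Cref{prop:firmly}, which is exactly the tool that yields the sharp additive constant $\delta$; a naive chain of triangle inequalities would only produce $2\delta$ in \eqref{eq:imbound_g_delta}.

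First I would apply \Cref{prop:firmly} with $g=g^\delta$ and $h=Ax$, using the given selections $\hat{x}_\alpha\in R_\alpha(g^\delta)$ and $x_\alpha\in R_\alpha(Ax)$. Combined with the noise bound $\|g^\delta-Ax\|_\Y\leq\delta$, this gives
\[ \|(g^\delta-A\hat{x}_\alpha)-(Ax-Ax_\alpha)\|_\Y^2 + \|A\hat{x}_\alpha-Ax_\alpha\|_\Y^2 \leq \|g^\delta-Ax\|_\Y^2 \leq \delta^2. \]
Since both summands on the left are nonnegative, each is bounded by $\delta^2$, so that $\|A\hat{x}_\alpha-Ax_\alpha\|_\Y\leq\delta$ and $\|(g^\delta-A\hat{x}_\alpha)-(Ax-Ax_\alpha)\|_\Y\leq\delta$.

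From here both claims follow by a single triangle inequality. For \eqref{eq:imbound_Ax} I would split $Ax-A\hat{x}_\alpha=(Ax-Ax_\alpha)+(Ax_\alpha-A\hat{x}_\alpha)$ and invoke the first of the two bounds above. For \eqref{eq:imbound_g_delta} I would split $g^\delta-A\hat{x}_\alpha=((g^\delta-A\hat{x}_\alpha)-(Ax-Ax_\alpha))+(Ax-Ax_\alpha)$ and invoke the second bound.

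There is essentially no serious obstacle once \Cref{prop:firmly} is available; the only subtlety — and the reason the argument is not completely trivial — is that the sharp constant in \eqref{eq:imbound_g_delta} genuinely requires the \emph{firm} (rather than merely the plain) non-expansiveness, since it is the combined inequality that controls the residual difference $(g^\delta-A\hat{x}_\alpha)-(Ax-Ax_\alpha)$ by $\delta$ instead of by $2\delta$.
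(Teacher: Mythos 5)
Your proposal is correct and follows the paper's proof essentially verbatim: both apply \Cref{prop:firmly} with $g=g^\delta$, $h=Ax$, bound each nonnegative summand on the left by $\delta^2$, and conclude with the same two triangle-inequality decompositions. Your closing observation that \eqref{eq:imbound_g_delta} needs the \emph{firm} rather than the plain non-expansiveness to avoid a $2\delta$ constant is also accurate.
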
 

\begin{proof}
\Cref{prop:firmly} with $g=g^\delta$ and $h=Ax$ yields   
\begin{align*}
 \| (Ax-Ax_\alpha) - (g^\delta-A\hat{x}_\alpha)\|_\Y^2 + \| Ax_\alpha -A\hat{x}_\alpha\|_\Y^2 \leq \delta^2.
\end{align*}
We neglect the first summand on the left hand side and obtain
\[ \|Ax-A\hat{x}_\alpha \|_\Y  \leq \|Ax-Ax_\alpha \|_\Y+\|Ax_\alpha-A\hat{x}_\alpha \|_\Y \leq \delta + \|Ax-Ax_\alpha \|_\Y\] and the second for
\[ \|g^\delta-A\hat{x}_\alpha\|_\Y \leq \| (Ax-Ax_\alpha) - (g^\delta-A\hat{x}_\alpha)\|_\Y + \|Ax-Ax_\alpha\|_\Y \leq \delta + \|Ax-Ax_\alpha\|_\Y. \]
\end{proof} 
\begin{proposition}\label{lem:image_space_bounds} 
Let $\nu\in (0,1]$ and $\alpha, \varrho>0$. Suppose $x\in K_\nu^\varrho$. 
\begin{remunerate} 
\item Let $c_r>0$. If $\alpha\leq c_r  \varrho^{-\frac{1}{\nu}} \delta^\frac{1}{\nu}$ then \[ \|Ax-A\xh \|_\Y\leq (1+c_r^\nu) \delta \quad\text{for all}\quad \xh\in R_\alpha(g^\delta).\]  
\item Let $c_D>1$. If $\xh\in R_\alpha(g^\delta)$ satisfies $c_D \delta\leq \|g^\delta - A\xh \|$, then \[ (c_D-1)^\frac{1}{\nu}\varrho^{-\frac{1}{\nu}}  \delta^\frac{1}{\nu}  \leq \alpha.\] 
\end{remunerate} 
\end{proposition}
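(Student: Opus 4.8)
The plan is to derive both parts directly from \Cref{prop:image_space_noisebounds}, using the defining bound of the set $K_\nu^\varrho$. Since $x\in K_\nu^\varrho$ means $\varrho_\nu(x)\leq \varrho$, the definition of $\varrho_\nu$ in \eqref{eq:def_K} gives the pointwise image-space approximation bound $\|Ax-Ax_\alpha\|_\Y \leq \varrho\,\alpha^\nu$ for every $\alpha>0$ and every $x_\alpha\in R_\alpha(Ax)$. This is the single structural fact I would feed into the two estimates from the previous lemma.

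For part 1, I would start from \eqref{eq:imbound_Ax}, namely $\|Ax-A\xh\|_\Y \leq \delta + \|Ax-Ax_\alpha\|_\Y$, and substitute the $K_\nu^\varrho$-bound to get $\|Ax-A\xh\|_\Y \leq \delta + \varrho\,\alpha^\nu$. The hypothesis $\alpha \leq c_r\,\varrho^{-1/\nu}\delta^{1/\nu}$ then yields $\varrho\,\alpha^\nu \leq \varrho\,(c_r\,\varrho^{-1/\nu}\delta^{1/\nu})^\nu = c_r^\nu\,\delta$, so that $\|Ax-A\xh\|_\Y \leq (1+c_r^\nu)\delta$. This is essentially a one-line computation once the bound on $\alpha$ is raised to the power $\nu$.

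For part 2, I would argue contrapositively on the size of $\alpha$. Combining the triangle-type estimate \eqref{eq:imbound_g_delta}, which gives $\|g^\delta-A\xh\|_\Y \leq \delta + \|Ax-Ax_\alpha\|_\Y \leq \delta + \varrho\,\alpha^\nu$, with the assumed lower bound $c_D\delta \leq \|g^\delta-A\xh\|_\Y$, I obtain $c_D\delta \leq \delta + \varrho\,\alpha^\nu$, hence $(c_D-1)\delta \leq \varrho\,\alpha^\nu$. Solving for $\alpha$ gives $\alpha^\nu \geq (c_D-1)\varrho^{-1}\delta$, and taking the $\nu$-th root yields $(c_D-1)^{1/\nu}\varrho^{-1/\nu}\delta^{1/\nu}\leq \alpha$, as claimed. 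The condition $c_D>1$ is exactly what makes $(c_D-1)$ positive so that the root is well defined.

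There is no serious obstacle here: both parts reduce to inserting the approximation bound $\|Ax-Ax_\alpha\|_\Y\leq\varrho\,\alpha^\nu$ into the two inequalities of \Cref{prop:image_space_noisebounds} and rearranging. The only point requiring minor care is that the bounds on $x_\alpha$ hold for \emph{every} selection $x_\alpha\in R_\alpha(Ax)$; by \Cref{lem:prox_of_Q} the quantity $Ax_\alpha$ is in fact independent of the chosen minimizer, so this causes no ambiguity and the estimates hold uniformly over $\xh\in R_\alpha(g^\delta)$.
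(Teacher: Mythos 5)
Your proposal is correct and follows exactly the paper's own argument: both parts insert the bound $\|Ax-Ax_\alpha\|_\Y\leq\varrho\,\alpha^\nu$ (from $x\in K_\nu^\varrho$) into the two inequalities of \Cref{prop:image_space_noisebounds} and rearrange. The remark about $Ax_\alpha$ being independent of the selection via \Cref{lem:prox_of_Q} is a nice extra observation, but nothing further is needed.
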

\begin{proof}
Let $x_\alpha \in R_\alpha(Ax).$
\begin{remunerate}
\item By \eqref{eq:imbound_Ax} and the definition of $\varrho_\nu$ we obtain 
\[ \|Ax - A\xh\|_\Y  \leq \delta + \ \varrho \alpha^\nu \leq (1+c_r^\nu) \delta.\] 
\item The bound \eqref{eq:imbound_g_delta} implies
\[ c_D\delta \leq \delta + \|Ax -A x_\alpha\|_\Y \leq \delta + \varrho \alpha^\nu. \] 
Subtracting $\delta$ and rearranging yields the claim. 
\end{remunerate}
\end{proof}
\subsection{Regularity of the minimizers}
First we recall the well-known fact that the source condition \eqref{eq:classical_sc} implies a linear convergence rate in the image space (see e.g. \cite[Lem.~3.5]{GHS:11}).
\begin{lemma} \label{prop:source_implies_linear}
Let \neww{$z\in \X$} and assume $\omega\in\Y$ with $A^\ast \omega\in \partial\mathcal{R}(z).$ Then 
\[ \|Az- A z_\alpha \|_\Y\leq \|\omega\|_\Y \alpha \quad\text{for all } \alpha>0 \text{ and } z_\alpha\in R_\alpha(Az) .\] 
 \end{lemma}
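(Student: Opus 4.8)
The plan is to reduce the claim to monotonicity of a subdifferential on the Hilbert space $\Y$, working with the pushforward functional $\mathcal{Q}$ from \Cref{prop:introduceQ}. The crucial point is that the naive route---inserting the source condition into the minimality inequality $T_\alpha(z_\alpha,Az)\le T_\alpha(z,Az)$---only yields $\|Az-Az_\alpha\|_\Y\le 2\,\|\omega\|_\Y\,\alpha$ with a spurious factor $2$; obtaining the sharp constant forces us to use the optimality condition for $z_\alpha$ \emph{together with} the source condition, i.e.\ monotonicity of the subgradient, rather than minimality tested against a single competitor.

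First I would translate the source condition into a statement about $\mathcal{Q}$. Since $A^\ast\omega\in\partial\mathcal{R}(z)$ gives $\mathcal{R}(y)\ge\mathcal{R}(z)+\langle\omega,Ay-Az\rangle_\Y$ for all $y\in\X$, testing with $y\in A\inv(\{Az\})$ shows that $z$ is $\mathcal{R}$-minimal in its fibre, whence $\mathcal{R}(z)=\mathcal{Q}(Az)$. Taking the infimum over $\{y\colon Ay=g\}$ in the subgradient inequality (the infimum being attained by \Cref{ass:topological}) then yields $\mathcal{Q}(g)\ge\mathcal{Q}(Az)+\langle\omega,g-Az\rangle_\Y$ for all $g\in\Y$, that is, $\omega\in\partial\mathcal{Q}(Az)$.

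Next I would record the optimality condition for the minimizer. Exactly as in the proof of \Cref{lem:K_nu_props}.3, \Cref{lem:prox_of_Q} together with \cite[Prop.~16.34]{Bauschke} gives $\eta_\alpha:=\tfrac{1}{\alpha}(Az-Az_\alpha)\in\partial\mathcal{Q}(Az_\alpha)$. Monotonicity of $\partial\mathcal{Q}$ applied to the pair $\omega\in\partial\mathcal{Q}(Az)$ and $\eta_\alpha\in\partial\mathcal{Q}(Az_\alpha)$ gives $\langle\omega-\eta_\alpha,Az-Az_\alpha\rangle_\Y\ge 0$. Inserting the definition of $\eta_\alpha$ produces $\tfrac{1}{\alpha}\|Az-Az_\alpha\|_\Y^2\le\langle\omega,Az-Az_\alpha\rangle_\Y\le\|\omega\|_\Y\,\|Az-Az_\alpha\|_\Y$, and dividing by $\|Az-Az_\alpha\|_\Y$ (the case of a vanishing left-hand side being trivial) finishes the proof with the sharp constant.

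The main obstacle is precisely this sharpness: one must resist the tempting one-line minimality argument and instead assemble both subgradient inclusions so that monotonicity can be invoked. The secondary technical point is the passage from the source condition on $\mathcal{R}$ over $\X$ to the inclusion $\omega\in\partial\mathcal{Q}(Az)$ over $\Y$, which hinges on the attainment of the infimum defining $\mathcal{Q}$; this is the only place where \Cref{ass:topological} enters the argument.
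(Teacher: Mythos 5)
Your proof is correct and is essentially the paper's argument: the paper combines the first-order optimality condition $\xi_\alpha:=\frac{1}{\alpha}A^\ast A(z-z_\alpha)\in\partial\mathcal{R}(z_\alpha)$ with the source condition $A^\ast\omega\in\partial\mathcal{R}(z)$ in exactly the two-subgradient (monotonicity) chain you describe, namely $\frac{1}{\alpha}\|Az-Az_\alpha\|_\Y^2=\langle\xi_\alpha,z-z_\alpha\rangle\leq\mathcal{R}(z)-\mathcal{R}(z_\alpha)\leq\langle\omega,Az-Az_\alpha\rangle\leq\|\omega\|_\Y\|Az-Az_\alpha\|_\Y$, which yields the sharp constant the same way. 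The only difference is cosmetic: you push everything forward to $\partial\mathcal{Q}$ on $\Y$, whereas the paper works directly with $\partial\mathcal{R}$ on $\X$ and therefore needs neither the extra step establishing $\omega\in\partial\mathcal{Q}(Az)$ nor any appeal to \Cref{ass:topological} beyond the existence of $z_\alpha$.
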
 
\begin{proof}
The first order optimality condition yields ${\xi_\alpha:=\frac{1}{\alpha} A^\ast A (z-z_\alpha)\in \partial\mathcal{R}(z_\alpha)}$. Solving the inequality 
\begin{align*}
\frac{1}{\alpha}\|Az-Az_\alpha\|^2 = \langle \xi_\alpha, z-z_\alpha \rangle  \leq \mathcal{R}(z)-\mathcal{R}(z_\alpha) \leq \langle A^\ast \omega, z-z_\alpha \rangle  \leq  \|\omega\|_\Y \|Az-Az_\alpha \|_\Y
\end{align*} 
for $\|Az-Az_\alpha\|_\Y$ proves the claim.
\end{proof}
\begin{lemma}\label{prop:reg_of_mini}
Let $\alpha>0$, $\xh\in R_\alpha(g^\delta)$. Furthermore let $\beta>0$, $(\xh)_\beta\in R_\beta(A\xh)$ and $x_\beta\in R_\beta(Ax)$. 
\begin{remunerate}
\item If $\beta\in (0,\alpha]$ then \[ \| A\xh- A(\xh)_\beta\|_\Y \leq\frac{\beta \delta}{\alpha} +  \|Ax-Ax_\beta\|_\Y  .\] 
\item If $\beta\in [\alpha,\infty)$ then
 \[  \| A\xh- A(\xh)_\beta\|_\Y \leq \delta 
  + 2 \|Ax-Ax_\beta\|_\Y . \] 
\end{remunerate}
\end{lemma}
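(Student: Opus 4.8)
The statement bounds the image-space distance $\|A\xh - A(\xh)_\beta\|_\Y$ between the image of a noisy reconstruction $\xh\in R_\alpha(g^\delta)$ and the image of its own re-regularization at level $\beta$, comparing it to the corresponding exact-data quantity $\|Ax-Ax_\beta\|_\Y$. The natural strategy is to route everything through the proximity-mapping identity $A\circ R_\gamma = \prox_{\gamma\mathcal{Q}}$ from Proposition~\ref{lem:prox_of_Q} and the firm non-expansiveness from Corollary~\ref{prop:firmly}, since these are precisely the tools that let us compare minimizers at different data points and different regularization parameters.

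\textbf{Step 1 (compare at the same level $\beta$).} I would first insert the intermediate point $Ax_\beta$, the exact-data reconstruction at level $\beta$, writing
\[ \|A\xh - A(\xh)_\beta\|_\Y \leq \|A\xh - Ax_\beta\|_\Y + \|Ax_\beta - A(\xh)_\beta\|_\Y. \]
Both $(\xh)_\beta\in R_\beta(A\xh)$ and $x_\beta\in R_\beta(Ax)$ are minimizers \emph{at the same level $\beta$}, so Corollary~\ref{prop:firmly} (applied with $\alpha$ there equal to $\beta$, and the two data points $A\xh$ and $Ax$) gives the non-expansiveness estimate
\[ \|A x_\beta - A(\xh)_\beta\|_\Y \leq \|Ax - A\xh\|_\Y. \]
This reduces the second term to the noisy-versus-exact image error, which is exactly what Lemma~\ref{prop:image_space_noisebounds}, inequality~\eqref{eq:imbound_Ax}, controls by $\delta + \|Ax - Ax_\alpha\|_\Y$ — though here I would rather keep it as $\|Ax-A\xh\|_\Y$ and postpone bounding it, since the two cases treat it differently.

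\textbf{Step 2 (the term $\|A\xh - Ax_\beta\|_\Y$ and the case split).} The remaining term compares $A\xh$ (a level-$\alpha$ object at the noisy data) with $Ax_\beta$ (a level-$\beta$ object at the exact data), so it mixes both a change of data and a change of parameter. In the case $\beta\le\alpha$, I expect to exploit a monotonicity/contraction property of $\prox_{\gamma\mathcal{Q}}$ as $\gamma$ decreases (the resolvent-type identity for proximal operators), which should produce the scaling factor $\beta/\alpha$ multiplying $\delta$; this is the origin of the $\tfrac{\beta\delta}{\alpha}$ term in part~1. In the case $\beta\ge\alpha$, no such contraction is available, so one only gets the crude $\delta$ together with a second copy of the exact-data error, which is why part~2 carries the factor $2$ in front of $\|Ax-Ax_\beta\|_\Y$ and a bare $\delta$. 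I would bound $\|A\xh - Ax_\beta\|_\Y$ by inserting $Ax$ and $Ax_\beta$ suitably and invoking either firm non-expansiveness again or the elementary inequality $\|g-\prox_{\gamma\mathcal{Q}}(g)\|_\Y$ is non-decreasing in $\gamma$.

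\textbf{Main obstacle.} The delicate point is the parameter-dependence in Step~2: firm non-expansiveness (Corollary~\ref{prop:firmly}) is stated for a \emph{fixed} level, so comparing $\prox_{\alpha\mathcal{Q}}$ with $\prox_{\beta\mathcal{Q}}$ for $\alpha\ne\beta$ requires an additional ingredient relating proximal operators at different scales — most cleanly the identity $\prox_{\beta\mathcal{Q}}(g) = \prox_{\beta\mathcal{Q}}\bigl(\prox_{\alpha\mathcal{Q}}(g)\bigr)$-type semigroup/monotonicity relations, or the fact that $\gamma\mapsto \gamma^{-1}\bigl(g-\prox_{\gamma\mathcal{Q}}(g)\bigr)$ is the gradient flow increment and $\|g-\prox_{\gamma\mathcal{Q}}(g)\|_\Y$ is monotone in $\gamma$. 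Establishing the correct sharp constant ($\beta/\alpha$ when $\beta\le\alpha$, and $1$ with a doubled exact-data term when $\beta\ge\alpha$) is where the real work lies; the triangle-inequality bookkeeping in Step~1 is routine once the level-$\beta$ comparison is isolated.
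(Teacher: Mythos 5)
Your proposal is a plan rather than a proof, and the one concrete step it contains --- the decomposition $\| A\xh- A(\xh)_\beta\|_\Y \leq \|A\xh - Ax_\beta\|_\Y + \|Ax_\beta - A(\xh)_\beta\|_\Y$ followed by the non-expansiveness bound $\|Ax_\beta - A(\xh)_\beta\|_\Y \leq \|Ax-A\xh\|_\Y$ --- cannot deliver part 1. After this step you are left with an additive term of size $\|Ax - A\xh\|_\Y$, which by \eqref{eq:imbound_Ax} is of order $\delta + \|Ax-Ax_\alpha\|_\Y$ and in particular does not tend to $0$ as $\beta \searrow 0$ with $\alpha$ fixed, whereas the claimed bound $\frac{\beta\delta}{\alpha} + \|Ax - Ax_\beta\|_\Y$ does. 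No amount of work on the remaining term $\|A\xh - Ax_\beta\|_\Y$ can repair this. The mechanism you are missing for part 1 is that $\xh$ itself satisfies the source condition \eqref{eq:classical_sc}: the first-order optimality condition for the Tikhonov problem at level $\alpha$ gives $A^\ast\omega \in\partial\mathcal{R}(\xh)$ with $\omega = \frac{1}{\alpha}(g^\delta - A\xh)$, so \Cref{prop:source_implies_linear} applied to $z=\xh$ yields the linear-in-$\beta$ bound $\|A\xh - A(\xh)_\beta\|_\Y \leq \beta\|\omega\|_\Y$; then $\|\omega\|_\Y \leq \frac{\delta}{\alpha} + \frac{1}{\alpha}\|Ax-Ax_\alpha\|_\Y \leq \frac{\delta}{\alpha} + \frac{1}{\beta} \|Ax - Ax_\beta\|_\Y$ by \eqref{eq:imbound_g_delta} and the monotonicity of $\gamma\mapsto\frac{1}{\gamma}\|Ax-Ax_\gamma\|_\Y$ (\Cref{app:monotone}). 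No resolvent identity or semigroup property of $\prox_{\gamma\mathcal{Q}}$ is needed, and it is this optimality-condition argument, not a contraction in the parameter, that produces the factor $\beta/\alpha$.

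For part 2 your decomposition does close, but only with the worse constants $2\delta + 3\|Ax-Ax_\beta\|_\Y$: you pay $\|Ax - A\xh\|_\Y$ twice, once inside $\|A\xh - Ax_\beta\|_\Y \leq \|A\xh - Ax\|_\Y + \|Ax-Ax_\beta\|_\Y$ and once in the non-expansiveness step, so the stated inequality is not proved. The paper instead uses the \emph{residual} half of firm non-expansiveness at level $\beta$, namely $\|(Ax-Ax_\beta)-(A\xh - A(\xh)_\beta)\|_\Y \leq \|Ax - A\xh\|_\Y$ from \Cref{prop:firmly}, followed by a single triangle inequality, which costs $\|Ax-A\xh\|_\Y$ only once; combined with \eqref{eq:imbound_Ax} and the fact that $\gamma\mapsto\|Ax-Ax_\gamma\|_\Y$ is non-decreasing this gives exactly $\delta + 2\|Ax-Ax_\beta\|_\Y$.
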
 
\begin{proof} 
\begin{remunerate}
\item \neww{By the first order optimality condition the element $\xh$ satisfies the prerequisite $A^\ast \omega\in \partial\mathcal{R}(\xh)$ of \Cref{prop:source_implies_linear} with $\omega=\frac{1}{\alpha}(g^\delta -A \xh).$} \\
By \Cref{app:monotone} the map $\alpha \mapsto \frac{1}{ \alpha} \|Ax -Ax_\alpha \|_\Y$ is non increasing. Together with \eqref{eq:imbound_g_delta}  we obtain
\begin{align*}
\neww{\|\omega\|_\Y=} \frac{1}{\alpha} \|g^\delta -A \xh\|_\Y \leq \frac{\delta}{\alpha}  + \frac{1}{\alpha}\|Ax -Ax_\alpha \|_\Y\leq  \frac{\delta}{\alpha} + \frac{1}{\beta}\|Ax -Ax_\beta \|_\Y .
\end{align*}
Hence \Cref{prop:source_implies_linear} implies the claim.
\item We use first \Cref{prop:firmly} with $g=Ax$ and $h=A\xh$ then \eqref{eq:imbound_Ax} and finally non decreasingness of $\alpha\mapsto \|Ax-Ax_\alpha \|_\Y$ (see \Cref{app:monotone}) to estimate 
\begin{align*}
\|(Ax-A x_\beta) - (A\xh-A (\xh)_\beta)  \|_\Y&  \leq \|
Ax - A\xh\|_\Y \\ 
& \leq \delta +  \|Ax - Ax_\alpha\|_\Y \\ 
&\leq \delta +  \|Ax - Ax_\beta\|_\Y
\end{align*}
The triangle inequality finishes the proof. 
\end{remunerate}
\end{proof}
\begin{proposition}\label{lem:reg_of_mini}
Let $\nu\in (0,1]$ and $\varrho, c_l,\alpha>0$.
Suppose $x\in K_\nu^\varrho$ and \linebreak ${\xh\in R_\alpha(g^\delta)}$. If $c_l \varrho^{-\frac{1}{\nu}}  \delta^\frac{1}{\nu} \leq \alpha$, then $\varrho_\nu(\xh)\leq (2+c_l^{-\nu})\varrho$.
\end{proposition}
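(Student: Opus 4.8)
The plan is to estimate $\varrho_\nu(\xh)$ directly from its definition as a supremum. By \eqref{eq:def_K} we have $\varrho_\nu(\xh)=\sup\{\beta^{-\nu}\|A\xh-A(\xh)_\beta\|_\Y\colon \beta>0,\,(\xh)_\beta\in R_\beta(A\xh)\}$, so it suffices to bound the supremand $\beta^{-\nu}\|A\xh-A(\xh)_\beta\|_\Y$ by $(2+c_l^{-\nu})\varrho$ uniformly in $\beta>0$ and in the selection $(\xh)_\beta$. I would split the range of $\beta$ at the value $\alpha$ and invoke \Cref{prop:reg_of_mini} in each of the two regimes, always using the membership $x\in K_\nu^\varrho$ to replace $\|Ax-Ax_\beta\|_\Y$ by $\varrho\beta^\nu$ (which is well defined, since by \Cref{lem:prox_of_Q} the quantity $Ax_\beta$ does not depend on the chosen minimizer).

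For $\beta\in(0,\alpha]$, \Cref{prop:reg_of_mini}.1.\ gives $\|A\xh-A(\xh)_\beta\|_\Y\le \beta\delta/\alpha+\varrho\beta^\nu$, hence $\beta^{-\nu}\|A\xh-A(\xh)_\beta\|_\Y\le \beta^{1-\nu}\delta/\alpha+\varrho$. Since $\nu\le 1$ the exponent $1-\nu$ is nonnegative, so $\beta^{1-\nu}\le\alpha^{1-\nu}$ and the right-hand side is at most $\alpha^{-\nu}\delta+\varrho$. For $\beta\in[\alpha,\infty)$, \Cref{prop:reg_of_mini}.2.\ gives $\|A\xh-A(\xh)_\beta\|_\Y\le\delta+2\varrho\beta^\nu$, so that $\beta^{-\nu}\|A\xh-A(\xh)_\beta\|_\Y\le\beta^{-\nu}\delta+2\varrho\le\alpha^{-\nu}\delta+2\varrho$, using $\beta\ge\alpha$ in the last step.

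The single quantity to control in both regimes is therefore $\alpha^{-\nu}\delta$, and this is where the hypothesis $c_l\varrho^{-1/\nu}\delta^{1/\nu}\le\alpha$ enters: raising it to the power $\nu$ yields $c_l^\nu\varrho^{-1}\delta\le\alpha^\nu$, i.e.\ $\alpha^{-\nu}\delta\le c_l^{-\nu}\varrho$. Substituting this into the two estimates gives $\beta^{-\nu}\|A\xh-A(\xh)_\beta\|_\Y\le(1+c_l^{-\nu})\varrho$ for $\beta\le\alpha$ and $\le(2+c_l^{-\nu})\varrho$ for $\beta\ge\alpha$; taking the supremum over all $\beta>0$ and all selections then gives the claimed bound $\varrho_\nu(\xh)\le(2+c_l^{-\nu})\varrho$. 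The argument is essentially a bookkeeping of two cases, and the only genuinely delicate point is the step $\beta^{1-\nu}\le\alpha^{1-\nu}$ for small $\beta$, which relies crucially on $\nu\le 1$: for $\nu>1$ the factor $\beta^{1-\nu}$ would blow up as $\beta\to 0$ and this approach would break down. All the substantive analytic work has already been carried out in \Cref{prop:reg_of_mini}, so here I would only need to combine those estimates with the definition of $K_\nu^\varrho$ and the parameter lower bound.
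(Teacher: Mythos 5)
Your proposal is correct and follows essentially the same route as the paper: split at $\beta=\alpha$, apply the two cases of \Cref{prop:reg_of_mini}, use $x\in K_\nu^\varrho$ to bound $\|Ax-Ax_\beta\|_\Y\le\varrho\beta^\nu$, and convert the parameter lower bound into $\alpha^{-\nu}\delta\le c_l^{-\nu}\varrho$; your step $\beta^{1-\nu}\le\alpha^{1-\nu}$ is exactly the paper's estimate $\alpha^{\nu-1}\beta\le\beta^\nu$ written after dividing by $\beta^\nu$. The bookkeeping and resulting constants $(1+c_l^{-\nu})\varrho$ and $(2+c_l^{-\nu})\varrho$ match the paper's.
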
 
\begin{proof}
Let $\beta\leq \alpha$. With $\delta\leq c_l^{-\nu} \varrho\alpha^\nu$ we estimate
\[ \frac{\delta \beta}{\alpha}\leq c_l^{-\nu} \varrho\alpha^{\nu-1} \beta \leq c_l^{-\nu} \varrho \beta^\nu .\] 
Furthermore 
\[ \delta\leq c_l^{-\nu} \varrho\alpha^\nu\leq c_l^{-\nu} \varrho\beta^\nu \quad\text{for all } \beta\geq \alpha.\]
Together with $\|Ax-Ax_\beta\|_\Y\leq \varrho \beta^\nu$ for all $\beta>0$ and $x_\beta\in R_\beta(Ax)$ the result follows from \Cref{prop:reg_of_mini}. 
\end{proof}
\subsection{Almost minimaxity on the sets  \texorpdfstring{$K_\nu$}{}}\label{sec:minimaxity_on}
Now we are in position to give the proof of \Cref{thm:minimax}.
\begin{proof}[Proof of \Cref{thm:minimax}]\label{pr:thm_minimax_bound}
\begin{remunerate}
\item By \Cref{lem:image_space_bounds} we have $\|Ax-A\xh\|_\Y\leq c_1 \delta$ and \Cref{lem:reg_of_mini} yields $x, \xh\in K_\nu^{c_2\varrho}$. 
\item Using the triangle inequality we obtain \[ \|Ax-A\xh\|_\Y\leq \delta + \|g^\delta - A\xh\|_\Y \leq d_1 \delta. \] 
\Cref{lem:image_space_bounds} provides $(c_D-1)^\frac{1}{\nu} \varrho^{-\frac{1}{\nu}}\delta^\frac{1}{\nu}\leq\alpha.$ Therefore \Cref{lem:reg_of_mini} yields \linebreak $x, \xh\in K_\nu^{d_2 \varrho}$.
\end{remunerate}
In both cases the claim follows from the definition of the modulus $\Omega.$
\end{proof}

\section{Convergence rates theory for Banach space regularization} 
\subsection{Source-wise representations and linear image space approximation} 
We start with a converse to \Cref{prop:source_implies_linear}: A linear bound $\|Ax-Ax_\alpha\|_\Y=\mathcal{O}(\alpha)$ implies the source condition \eqref{eq:classical_sc} and the minimal $\mathcal{O}$-constant $\varrho_1(x)$ agrees with the minimal norm $\|\omega\|_\Y$ attended by a source element $\omega$. Similar results can be found in \cite[Lem.~4.1]{GHS:11} and \cite[Prop. 4.1]{RR:10}. For sake of self-containedness we include a proof.
\begin{proposition} \label{lem:characterization_linear_case}
Let \neww{$x\in \X$} with $\mathcal{R}(x)=  \inf \{ \mathcal{R}(z) \colon z\in \X \text{ with } Az=Ax\}$. Then 
 \[ \varrho_1(x)= \inf\left\{ \|\omega\|_\Y \colon A^\ast\omega \in \partial\mathcal{R}(x) \right\}. \] 
If this quantity is finite and $x_\alpha\in R_\alpha(Ax)$, $\alpha>0$ is any selection, then the net \linebreak $(\frac{1}{\alpha} (Ax-Ax_\alpha))_{\alpha>0}$ convergences weakly for $\alpha \searrow 0$ to the unique $\omega\in \Y$ with $A^\ast \omega \in \partial\mathcal{R}(x)$ and $\|\omega\|_\Y=\varrho_1(x).$ 
\end{proposition}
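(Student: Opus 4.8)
The plan is to prove the claimed equality by showing two inequalities, then to establish the weak convergence statement. For the direction $\varrho_1(x) \geq \inf\{\|\omega\|_\Y : A^\ast\omega \in \partial\mathcal{R}(x)\}$, I would assume $\varrho_1(x) < \infty$ (otherwise there is nothing to prove) and aim to construct a source element $\omega$ with $\|\omega\|_\Y \leq \varrho_1(x)$. The natural candidate is a weak limit of the net $\omega_\alpha := \frac{1}{\alpha}(Ax - Ax_\alpha)$. By the first-order optimality condition together with \Cref{lem:prox_of_Q}, one has $\omega_\alpha \in \partial\mathcal{Q}(Ax_\alpha)$, and $\|\omega_\alpha\|_\Y \leq \varrho_1(x)$ for all small $\alpha$ by definition of $\varrho_1$. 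First I would note that this net is norm-bounded, so by weak compactness of balls in the Hilbert space $\Y$ there is a weakly convergent subnet with some weak limit $\omega$, and weak lower semicontinuity of the norm gives $\|\omega\|_\Y \leq \varrho_1(x)$. The key step is then to pass the subgradient inclusion to the limit: since $Ax_\alpha \to Ax$ in norm (because $\varrho_1(x)<\infty$ forces $\|Ax - Ax_\alpha\|_\Y \leq \varrho_1(x)\alpha \to 0$) and $\omega_\alpha \rightharpoonup \omega$, I would use the closedness of the graph of the subdifferential $\partial\mathcal{Q}$ under the strong$\times$weak topology (a standard maximal-monotonicity fact, e.g. \cite[Prop.~20.38]{Bauschke}) to conclude $\omega \in \partial\mathcal{Q}(Ax)$. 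Unraveling the definition of $\mathcal{Q}$ as the push-forward of $\mathcal{R}$, the inclusion $\omega \in \partial\mathcal{Q}(Ax)$ should translate into $A^\ast\omega \in \partial\mathcal{R}(x)$; this translation, using $\mathcal{R}$-minimality of $x$, is where I expect to spend the most care.

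For the reverse inequality $\varrho_1(x) \leq \inf\{\|\omega\|_\Y : A^\ast\omega \in \partial\mathcal{R}(x)\}$, I would simply invoke \Cref{prop:source_implies_linear}: any $\omega$ with $A^\ast\omega \in \partial\mathcal{R}(x)$ yields $\|Ax - Ax_\alpha\|_\Y \leq \|\omega\|_\Y \alpha$ for all $\alpha>0$, hence $\varrho_1(x) \leq \|\omega\|_\Y$; taking the infimum over all such $\omega$ gives the bound. Combining the two inequalities yields the equality, and it also shows that the infimum on the right is attained (by the $\omega$ constructed above, whose norm equals $\varrho_1(x)$).

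It remains to upgrade subnet convergence to convergence of the full net and to establish uniqueness of the limit. For uniqueness I would argue that if $\omega, \omega'$ both satisfy $A^\ast\omega, A^\ast\omega' \in \partial\mathcal{R}(x)$ with minimal norm $\varrho_1(x)$, then by convexity of $\partial\mathcal{R}(x)$ their average $\frac{1}{2}(\omega+\omega')$ also gives a source element, and the strict convexity of the Hilbert norm (parallelogram law) forces $\omega=\omega'$ unless both have norm strictly larger than the minimum — so the minimal-norm source element is unique. Once uniqueness of the limit is known, the standard subnet argument applies: every subnet of $(\omega_\alpha)$ has a further weakly convergent subnet, and by the graph-closedness and norm-bound arguments above every such limit is the unique minimal-norm source element $\omega$; hence the full net converges weakly to $\omega$. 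The main obstacle throughout is the careful handling of the correspondence between $\partial\mathcal{Q}$ on the image space and $\partial\mathcal{R}$ on $\X$ via $A^\ast$, and ensuring the graph-closedness argument is valid in the $\tau$-to-weak setting of \Cref{ass:topological}; the rest is routine weak-compactness and convexity bookkeeping.
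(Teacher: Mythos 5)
Your proposal is correct and follows the same overall skeleton as the paper (lower bound via \Cref{prop:source_implies_linear}, weak compactness of the bounded net $\omega_\alpha=\frac{1}{\alpha}(Ax-Ax_\alpha)$ for the upper bound, convexity of the source set plus strict convexity of $\|\cdot\|_\Y$ for uniqueness, and the standard subnet argument for full-net convergence). The one genuinely different step is how you pass the subgradient inclusion to the limit. The paper works in $\X$: it uses the first-order condition $\frac{1}{\alpha_n}A^\ast A(x-x_{\alpha_n})\in\partial\mathcal{R}(x_{\alpha_n})$ and verifies the subgradient inequality for the weak limit $\omega$ directly, which requires first establishing $\mathcal{R}(x_{\alpha_n})\to\mathcal{R}(x)$ via \Cref{app:minimal_limit} and the $\mathcal{R}$-minimality of $x$. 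You instead work in $\Y$: you use $\omega_\alpha\in\partial\mathcal{Q}(Ax_\alpha)$ (which follows from \Cref{lem:prox_of_Q} and the prox characterization), the strong convergence $Ax_\alpha\to Ax$, and the strong$\times$weak closedness of the graph of the maximally monotone operator $\partial\mathcal{Q}$ to get $\omega\in\partial\mathcal{Q}(Ax)$; the translation to $A^\ast\omega\in\partial\mathcal{R}(x)$ is then the two-line estimate $\mathcal{R}(z)\geq\mathcal{Q}(Az)\geq\mathcal{Q}(Ax)+\langle\omega,Az-Ax\rangle=\mathcal{R}(x)+\langle A^\ast\omega,z-x\rangle$, where $\mathcal{Q}(Ax)=\mathcal{R}(x)$ is exactly where the $\mathcal{R}$-minimality enters. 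Your route buys you the omission of the convergence of the penalty values (and of \Cref{app:minimal_limit}) at the cost of invoking a deeper convex-analysis fact; note also that your worry about the $\tau$-to-weak setting is unfounded, since the graph-closedness is applied to $\partial\mathcal{Q}$ on the Hilbert space $\Y$ where $\mathcal{Q}$ is proper, convex and lower semi-continuous by \Cref{prop:introduceQ}, so the topology $\tau$ on $\X$ plays no role there. Both arguments are complete and correct.
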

\begin{proof}Taking the infimum over $\omega$ in 
\Cref{prop:source_implies_linear} yields 
\[  \varrho_1(x)\leq  \inf\left\{ \|\omega\|_\Y \colon A^\ast\omega \in \partial\mathcal{R}(x) \right\}.\]
To prove the remaining inequality let \neww{$x\in \X$} with $\varrho_1(x)<\infty$. \neww{Then the net\linebreak ${(\frac{1}{\alpha} (Ax-Ax_\alpha))_{\alpha>0}}$ is norm bounded in the Hilbert space $\Y.$} By the 
 Banach–Alaoglu theorem every null sequence of positive numbers has a subsequence $\alpha_n>0$ such that $\frac{1}{\alpha_n}(Ax-Ax_{\alpha_n})$ converges weakly to some $\omega\in \Y$ with $\|\omega\|_\Y\leq \varrho_1(x).$ \Cref{app:minimal_limit} and the minimality assumption yield  \[ \frac{1}{2\alpha_n} \|Ax-Ax_{\alpha_n}\|_\Y^2 + \mathcal{R}(x_{\alpha_n}) \rightarrow \mathcal{R}(x). \]
Together with $\|Ax-Ax_{\alpha_n}\|_\Y \leq \varrho_1(x) \alpha_n$ we obtain 
$ \mathcal{R}(x_{\alpha_n})\rightarrow \mathcal{R}(x).$
The first order optimality  condition yields $\frac{1}{\alpha_n} A^\ast A(x-x_{\alpha_n}) \in \partial\mathcal{R}(x_{\alpha_n})$. Hence for \neww{$z\in \X$} we obtain 
\begin{align*}
\mathcal{R}(x)+ \langle A^\ast \omega , z-x \rangle &  = \mathcal{R}(x)+ \langle \omega , A(z-x) \rangle \\ 
& = \lim_{n\rightarrow \infty} \mathcal{R}(x_{\alpha_n}) + \langle \frac{1}{\alpha_n}A(x-x_{\alpha_n}) , A(z-x_{\alpha_n}) \rangle  \\ 
& = \lim_{n\rightarrow \infty} \mathcal{R}(x_{\alpha_n}) + \langle\frac{1}{\alpha_n} A^\ast A(x-x_{\alpha_n}) , z-x_{\alpha_n} \rangle \leq \mathcal{R}(z).
\end{align*}
This shows $A^\ast \omega\in \partial\mathcal{R}(x)$. Therefore the stated identity is proven.\\
Being the the preimage of the convex set $\partial\mathcal{R}(x)$ under the linear map $A^\ast$ the set $\{\omega\in \Y \colon A^\ast \omega\in \partial\mathcal{R}(x)\}$ is convex. Strict convexity of $\|\cdot \|_\Y$ yields uniqueness of $\omega.$ In particular this implies the convergence of the net.
\end{proof}

\begin{corollary}\label{cor:definitness_of_rholin} We have $\varrho_1 (x) = 0$ if and only if $x\in \argmin_{z\in\neww{\X}}\mathcal{R}(z).$
\end{corollary}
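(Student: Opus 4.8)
The plan is to read the statement off the identity proved in \Cref{lem:characterization_linear_case}. Throughout we work under the standing hypothesis of that proposition, namely that $x$ is $\mathcal{R}$-minimal in $A\inv(\{Ax\})$; as the projection example $A(x_1,x_2)=x_1$ with $\mathcal{R}=\|\cdot\|^2$ shows (where $x=(0,1)$ gives $\varrho_1(x)=0$ but $x\notin\argmin\mathcal{R}$), this assumption cannot be dropped for the forward implication. Under it, \Cref{lem:characterization_linear_case} gives
\[ \varrho_1(x) = \inf\left\{ \|\omega\|_\Y \colon A^\ast \omega \in \partial\mathcal{R}(x) \right\}, \]
so the whole corollary reduces to the convex-analytic fact that $x\in \argmin_{z\in\X}\mathcal{R}(z)$ is equivalent to $0\in \partial\mathcal{R}(x)$.

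For the direction $x\in\argmin\mathcal{R} \Rightarrow \varrho_1(x)=0$ I would not even need the full identity: if $x$ globally minimizes the convex function $\mathcal{R}$, then $T_\alpha(x,Ax)=\mathcal{R}(x)\leq \mathcal{R}(z)\leq T_\alpha(z,Ax)$ for every $z\in\X$, so $x\in R_\alpha(Ax)$, and since $A\circ R_\alpha$ is single-valued by \Cref{lem:prox_of_Q} every selection $x_\alpha\in R_\alpha(Ax)$ satisfies $Ax_\alpha=Ax$; hence $\varrho_1(x)=0$. Equivalently, $0\in\partial\mathcal{R}(x)$ means that $\omega=0$ is admissible in the infimum above, which is therefore $\leq \|0\|_\Y=0$.

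For the converse $\varrho_1(x)=0 \Rightarrow x\in\argmin\mathcal{R}$ the point is that the infimum defining $\varrho_1(x)$ is attained: the second part of \Cref{lem:characterization_linear_case} states that whenever $\varrho_1(x)<\infty$ the net $\tfrac{1}{\alpha}(Ax-Ax_\alpha)$ converges weakly to the unique $\omega\in\Y$ with $A^\ast\omega\in\partial\mathcal{R}(x)$ and $\|\omega\|_\Y=\varrho_1(x)$. Plugging in $\varrho_1(x)=0$ forces $\omega=0$, whence $A^\ast\omega=0\in\partial\mathcal{R}(x)$ and thus $x\in\argmin_{z\in\X}\mathcal{R}(z)$.

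The one step requiring care — and the only genuine obstacle — is precisely this attainment in the converse: from $\inf\{\|\omega\|_\Y\}=0$ one must produce an actual source element, namely $\omega=0$. This is delivered by the weak-convergence and uniqueness assertion of \Cref{lem:characterization_linear_case}; alternatively one can argue directly that the set $\{\omega\in\Y\colon A^\ast\omega\in\partial\mathcal{R}(x)\}$ is closed, being the preimage of the closed convex set $\partial\mathcal{R}(x)$ under the continuous map $A^\ast$, so that a norm-minimizing sequence $\omega_n\to 0$ yields $0$ inside the set.
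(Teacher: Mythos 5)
Your proof is correct and follows essentially the same route as the paper: both reduce the claim, via the identity and the attainment statement of \Cref{lem:characterization_linear_case}, to the equivalence $0\in\partial\mathcal{R}(x)\iff x\in\argmin_{z\in\X}\mathcal{R}(z)$. Your additional observations --- that the corollary implicitly inherits the $\mathcal{R}$-minimality hypothesis of \Cref{lem:characterization_linear_case} (with a valid counterexample when it is dropped) and that attainment of the infimum can alternatively be obtained from closedness of $\{\omega\in\Y\colon A^\ast\omega\in\partial\mathcal{R}(x)\}$ --- are sound refinements rather than deviations.
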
 
\begin{proof}
By the second statement in \Cref{lem:characterization_linear_case} we have  $\rho_{\neww{1}} (x) = 0$ if and only if $0\in \partial\mathcal{R}(x)$. Hence the first order optimality condition
$x\in \argmin_{z\in\X} \mathcal{R}(z)$ if and only if $0\in \partial\mathcal{R}(x)$ yields the claim.
\end{proof}
\begin{example}\label{ex:easyvarrho}
Let \neww{$p \in [1,2]$, $\X=\ell^p :=\ell^p(\mathbb{N})$}, $  \Y= \ell^2:= \ell^2(\mathbb{N})$, \neww{$A\colon \ell^p\rightarrow \ell^2$ the embedding operator given by $x\mapsto x$}  and $\mathcal{R}$ given by $\mathcal{R}(x)= \frac{1}{p} \|x\|_{\ell^p}$.
Let $x\in \ell^p$.\\ 
If $p>1$ then  \neww{$\partial\mathcal{R}(x)=\{ \xi \}$} with $|\xi_j|= |x_j|^{p-1}$. \neww{The adjoint $A^\ast$ identifies with the embedding operator $\ell^2\rightarrow \ell^{p^\prime}$ with $p^\prime$ the H\"older conjugate of $p$.}
 Hence $x\in K_1$ if and only if $\|\xi\|_{\ell^2}<\infty$, and we have \[ \varrho_1(x)= \|\xi\|_{\ell^2}= \left( \sum\nolimits_{j\in \mathbb{N}} |x_j|^{2p-2}  \right)^{1/2} = \|x \|_{2p-2}^{p-1}. \]
Therefore \Cref{ass:K_1_quasi_banach} is satisfied in this case. \\
For $p=1$ we have \neww{$\xi \in \partial\mathcal{R}(x)$} if and only if $\xi_j=1$ for $x_j >0$, $\xi_j=-1$ for $x_j <0$ and $|\xi_j| \leq 1$ for $x_j=0$. Hence $K_1$ consists of all elements with finitely many non vanishing coefficients. We have 
$ \varrho_1(x)= \# \left\{ j\in \mathbb{N} \colon x_j \neq 0 \right\}^{1/2}$ and \Cref{ass:K_1_quasi_banach} is not fulfilled.    
\end{example}
\subsection{Computation of \texorpdfstring{$K_1$}{} for Banach space regularization}
In this subsection we assume $\XA$ is a Banach space with a dense, continuous embedding $\X\subset \XA$ and that $A$ extends to $\XA$ such that \eqref{eq:XAequiA} is satisfied. 
Let $u\in [1,\infty)$ and consider the penalty given by $\mathcal{R}(x)=\frac{1}{u}\|x\|_\X^u$.\\
\neww{If $\X$ is reflexive we choose $\tau$ to be the weak topology on $\X$. Then the sublevel sets of $\mathcal{R}$ are $\tau$-compact by the Banach–Alaoglu theorem.
Moreover $A$ is weak-to-weak continuous as it is bounded. Therefore, \Cref{ass:topological} is automatically satisfied in this case.}\\
In this subsection we compute $K_1$ for \neww{the three penalties covered in the examples in \Cref{sec:main_results_banach}}. We start with a tool that helps computing the function $\varrho_1$ up to equivalence. Note that the density $\X\subset \XA$ allows us to view the adjoint of the embedding as an embedding $\X_A ^\prime \subset \X^\prime$. 
\begin{proposition}\label{prop:computation_rholin}
\neww{We have $x\in K_1$ if and only if $\partial
\mathcal{R}(x)\cap \X_A ^\prime\neq \emptyset.$}
The function 
\begin{align*}
\overline{\varrho}_1 \colon \XA \rightarrow [0,\infty]  \quad\text{given by}\quad \overline{\varrho}_1(x)= \inf \left\{\|\xi\|_{\X_A^\prime} \colon \xi\in\partial
\mathcal{R} (x) \cap \X_A^\prime \right\}
\end{align*}
satisfies 
\[   \frac{1}{M} \varrho_1 (x) \leq \overline{\varrho}_1(x) \leq M \varrho_1 (x) \quad\text{for all } x\in \neww{\X}.\]
\end{proposition}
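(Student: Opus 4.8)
The plan is to deduce everything from \Cref{lem:characterization_linear_case} by transporting the characterization of $\varrho_1$ through the adjoint of the extended operator. First I observe that \eqref{eq:XAequiA} forces $A\colon\XA\to\Y$ to be injective, hence so is its restriction to $\X$. Consequently $A^{-1}(\{Ax\})\cap\X=\{x\}$, so every $x\in\X$ is trivially $\mathcal{R}$-minimal in its fibre and \Cref{lem:characterization_linear_case} applies without any further hypothesis, yielding
\[\varrho_1(x)=\inf\left\{\|\omega\|_\Y\colon A^\ast\omega\in\partial\mathcal{R}(x)\right\}\]
for all $x\in\X$, where $A^\ast\colon\Y\to\X^\prime$ denotes the adjoint of $A|_\X$.

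Next I set up the adjoint of the \emph{extended} operator. Since \eqref{eq:XAequiA} says $A\colon\XA\to\Y$ is a Banach space isomorphism with $\|A\|,\|A^{-1}\|\le M$, its adjoint $A^\ast\colon\Y\to\X_A^\prime$ (using $\Y^\prime\cong\Y$) is a surjective isomorphism satisfying
\[\frac{1}{M}\|\omega\|_\Y\le\|A^\ast\omega\|_{\X_A^\prime}\le M\|\omega\|_\Y\qquad\text{for all }\omega\in\Y.\]
By density of $\X\subset\XA$ the restriction map identifies $\X_A^\prime$ with a subspace of $\X^\prime$, and under this identification the extended adjoint $\Y\to\X_A^\prime$ and the original adjoint $\Y\to\X^\prime$ coincide: for $\omega\in\Y$ the functional $A^\ast\omega$ is the same element, merely viewed in the smaller or in the larger dual. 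In particular $A^\ast\omega\in\X^\prime$ lies in $\X_A^\prime$ for every $\omega\in\Y$, and conversely every $\xi\in\X_A^\prime$ is of the form $A^\ast\omega$ by surjectivity.

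I then translate the two descriptions into each other. Given $\omega\in\Y$ with $A^\ast\omega\in\partial\mathcal{R}(x)$, the element $\xi:=A^\ast\omega$ lies in $\partial\mathcal{R}(x)\cap\X_A^\prime$ and satisfies $\|\xi\|_{\X_A^\prime}\le M\|\omega\|_\Y$; taking the infimum over such $\omega$ gives $\overline{\varrho}_1(x)\le M\varrho_1(x)$. Conversely, given $\xi\in\partial\mathcal{R}(x)\cap\X_A^\prime$, surjectivity of $A^\ast$ provides $\omega=(A^\ast)^{-1}\xi\in\Y$ with $A^\ast\omega=\xi\in\partial\mathcal{R}(x)$ and $\|\omega\|_\Y\le M\|\xi\|_{\X_A^\prime}$, whence $\varrho_1(x)\le M\|\xi\|_{\X_A^\prime}$; taking the infimum over $\xi$ yields $\varrho_1(x)\le M\overline{\varrho}_1(x)$. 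Combining the two estimates gives the asserted equivalence $\frac{1}{M}\varrho_1(x)\le\overline{\varrho}_1(x)\le M\varrho_1(x)$. The characterization $x\in K_1\Leftrightarrow\partial\mathcal{R}(x)\cap\X_A^\prime\neq\emptyset$ follows at once, since $\varrho_1(x)<\infty$ holds precisely when the infimum is taken over a nonempty set, i.e.\ when some $\omega$ with $A^\ast\omega\in\partial\mathcal{R}(x)$ exists, which by the correspondence above is equivalent to $\partial\mathcal{R}(x)\cap\X_A^\prime\neq\emptyset$.

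The only genuinely delicate point is the bookkeeping in the second paragraph: one must check that $\X_A^\prime\subset\X^\prime$ is the correct injective identification induced by the density of the embedding, that the two adjoints agree under it, and above all that the extended adjoint is \emph{surjective} onto $\X_A^\prime$. This surjectivity is exactly what powers the backward direction and hence the bound $\varrho_1(x)\le M\overline{\varrho}_1(x)$; everything else reduces to the routine norm estimates for adjoints of isomorphisms.
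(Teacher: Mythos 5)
Your proof is correct and follows essentially the same route as the paper: both reduce to \Cref{lem:characterization_linear_case} and then match each $\xi\in\partial\mathcal{R}(x)\cap\X_A^\prime$ with an $\omega\in\Y$ satisfying $A^\ast\omega=\xi$, picking up a factor of $M$ in each direction. The only difference is cosmetic: where you obtain $\omega$ from $\xi$ via surjectivity of the adjoint of the isomorphism $A\colon\XA\rightarrow\Y$, the paper instead estimates $\langle\xi,z\rangle\leq M\|\xi\|_{\X_A^\prime}\|Az\|_\Y$ and invokes \Cref{lem:boundedness_linear_fct_range}, which yields the same $\omega$ with the same norm bound.
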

\begin{proof}
Suppose $\xi\in\partial
\mathcal{R} (x)\cap \X_A^\prime$. Let $z\in \XA$, then 
\[ \langle \xi, z\rangle \leq \|\xi\|_{\X_A^\prime} \|z\|_{\X_A} \leq M \|\xi\|_{\X_A^\prime} \|Az\|_\Y.\] 
\Cref{lem:boundedness_linear_fct_range} provides $\omega\in \Y$ with $\|\omega\|_\Y\leq M \|\xi\|_{\XA^\prime} $ and $A^\ast \omega = \xi\in \partial\mathcal{R}(x)$. Together with \Cref{lem:characterization_linear_case} this yields the first inequality. \\
Let $\omega\in \Y$, such that $A^\ast \omega \in \partial
\mathcal{R}(x)$. Then\[ \langle A^\ast \omega ,z \rangle = \langle \omega , Az \rangle \leq \|\omega \|_{\Y} \|Az\|_\Y \leq  M   \|\omega \|_{\Y}  \|z\|_{\X_A} \] 
for all $z\in \X.$ Hence $ \| A^\ast \omega \|_{\X_A^\prime}\leq M \|\omega\|_\Y $. This proves the second inequality. 
\end{proof}
\subsubsection*{Computation of \texorpdfstring{$K_1$}{}  for weighted \texorpdfstring{$p$}{}-norm penalization}\label{sec:wei}
We revisit the first example in \Cref{sec:main_results_banach}. Recall $\XA=\lspace  {\overline{a}} 2$ and $\neww{\X}=\lspace {\overline{r}}p$ with $p\in (1,2)$.
\begin{proposition}\label{prop:varrho_1_wei}
Let $\overline{s}=\overline {a}^{-\frac{1}{p-1}}\overline{r}^\frac{p}{p-1}$. Then $K_1= \lspace {\overline{s}} {2p-2}$ with
\[ \frac{1}{M} \varrho_1(x)\leq  \lnorm x s {2p-2} ^ {p-1} \leq M \varrho_1(x) \quad\text{for all } x\in \lspace  {\overline{a}}2.\] 
\end{proposition}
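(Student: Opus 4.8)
The plan is to apply \Cref{prop:computation_rholin}, which already packages the passage from $\varrho_1$ to the subdifferential: it reduces the claim (up to the factor $M$) to evaluating
\[ \overline{\varrho}_1(x) = \inf \left\{ \|\xi\|_{\X_A^\prime} \colon \xi \in \partial\mathcal{R}(x)\cap \X_A^\prime \right\}. \]
So the two ingredients I need are an explicit description of $\partial\mathcal{R}(x)$ and of the dual norm $\|\cdot\|_{\X_A^\prime}$, after which everything is weight bookkeeping.

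First I would compute the subdifferential. Since $p\in(1,2)$, the map $t\mapsto |t|^p$ is continuously differentiable, so $\mathcal{R}(x)=\frac{1}{p}\lnorm{x}{\overline{r}}{p}^p$ is G\^ateaux differentiable on $\X=\lspace{\overline{r}}{p}$ and its subdifferential is the singleton $\partial\mathcal{R}(x)=\{\xi\}$ with coordinates $\xi_j=\overline{r}_j^p|x_j|^{p-1}\operatorname{sgn}(x_j)$; a one-line computation confirms $\xi\in\X^\prime=\lspace{\overline{r}^{-1}}{p^\prime}$, as it must. Next I identify the dual space of $\XA=\lspace{\overline{a}}{2}$: substituting $y_j=\overline{a}_jx_j$ turns the $\XA$-norm into the unweighted $\ell^2$-norm, whence $\X_A^\prime=\lspace{\overline{a}^{-1}}{2}$ with $\|\xi\|_{\X_A^\prime}^2=\sum_j\overline{a}_j^{-2}\xi_j^2$.

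With these two facts, \Cref{prop:computation_rholin} gives $x\in K_1$ if and only if $\xi\in\lspace{\overline{a}^{-1}}{2}$, and then $\overline{\varrho}_1(x)=\|\xi\|_{\X_A^\prime}$. The remaining step is purely algebraic in the weights: because $\frac{2p-2}{p-1}=2$ and $\frac{p(2p-2)}{p-1}=2p$, the definition $\overline{s}=\overline{a}^{-1/(p-1)}\overline{r}^{p/(p-1)}$ yields $\overline{s}_j^{2p-2}=\overline{a}_j^{-2}\overline{r}_j^{2p}$, so inserting the coordinates of $\xi$ gives
\[ \|\xi\|_{\X_A^\prime}^2 = \sum\nolimits_j \overline{a}_j^{-2}\overline{r}_j^{2p}|x_j|^{2p-2} = \sum\nolimits_j \overline{s}_j^{2p-2}|x_j|^{2p-2} = \lnorm{x}{\overline{s}}{2p-2}^{2p-2}. \]
Hence $\overline{\varrho}_1(x)=\lnorm{x}{\overline{s}}{2p-2}^{p-1}$, the condition $\xi\in\lspace{\overline{a}^{-1}}{2}$ is exactly $\lnorm{x}{\overline{s}}{2p-2}<\infty$, and feeding this into the equivalence $\frac{1}{M}\varrho_1\leq\overline{\varrho}_1\leq M\varrho_1$ produces the claimed two-sided bound.

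The one point I expect to require genuine care — the main obstacle — is upgrading the pointwise identity to the set equality $K_1=\lspace{\overline{s}}{2p-2}$ (rather than $K_1=\X\cap\lspace{\overline{s}}{2p-2}$) and matching the stated range ``for all $x\in\lspace{\overline{a}}{2}$''. This needs the embedding $\lspace{\overline{s}}{2p-2}\subset\X=\lspace{\overline{r}}{p}$: from $\overline{r}_j\overline{s}_j^{-1}=(\overline{a}_j/\overline{r}_j)^{1/(p-1)}$ and the boundedness of $\overline{a}\,\overline{r}^{-1}$ one gets a uniform bound on $\overline{r}\,\overline{s}^{-1}$, and since $2p-2\leq p$ the inclusion $\ell^{2p-2}\subset\ell^p$ then gives $\lspace{\overline{s}}{2p-2}\subset\lspace{\overline{r}}{p}$. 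Thus every $x\in\lspace{\overline{s}}{2p-2}$ already lies in $\X$, so the singleton subdifferential is available and the computation above applies verbatim; for $x\in\lspace{\overline{a}}{2}\setminus\lspace{\overline{r}}{p}$ both $\varrho_1(x)$ and $\lnorm{x}{\overline{s}}{2p-2}$ are infinite, so the bound holds trivially and the two descriptions of $K_1$ coincide as subsets of $\lspace{\overline{a}}{2}$.
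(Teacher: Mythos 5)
Your proposal is correct and follows essentially the same route as the paper: compute the singleton subdifferential $\partial\mathcal{R}(x)=\{\xi\}$ with $|\xi_j|=\overline{r}_j^p|x_j|^{p-1}$, identify $\X_A^\prime=\lspace{\overline{a}^{-1}}{2}$ via the duality in \Cref{app:dual}, match the weights to obtain $\overline{\varrho}_1(x)=\lnorm{x}{\overline{s}}{2p-2}^{p-1}$, and conclude with \Cref{prop:computation_rholin}. Your closing paragraph on the embedding $\lspace{\overline{s}}{2p-2}\subset\lspace{\overline{r}}{p}$ and the treatment of $x\in\lspace{\overline{a}}{2}\setminus\X$ is a correct tidying-up of a point the paper's terse proof leaves implicit.
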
 
\begin{proof}
Let $x\in \lspace {\overline{r}} p$. Then $\neww{\partial \mathcal{R}}(x)=\{\xi\}$ with $|\xi_j|=  r_j^p |x_j|^{p-1}$.
With $\overline{\varrho}_1$ \neww{as in \Cref{prop:computation_rholin} and in view of} \Cref{app:dual} we obtain 
\[ \overline{\varrho}_1(x)= \lnorm \xi {a\inv} 2 =  \left( \sum\nolimits_{j\in\Lambda} a_j^{-2} r_j^{2p} |x_j|^{2p-2} \right)^{1/2}= \lnorm x s {2p-2} ^ {p-1}. \] 
\Cref{prop:computation_rholin} yields the result. 
\end{proof}
\subsubsection*{Computation of \texorpdfstring{$K_1$}{} for Besov \texorpdfstring{$0,p,p$}{}-penalties}\label{sec:besov0pp}
Next we characterize $K_1$ for Example $2.$ Recall $\XA=\bspace {-a}22$ and $\neww{\X}=\bspace 0pp$ with $p\in (1,\infty)$.
\begin{proposition} \label{prop:varrho_1_besov_pp}
Let $\tilde{s}=\frac{a}{p-1}$ and $\tilde{t}= 2p-2$. Then $K_1= \bspace {\tilde{s}} {\tilde{t}}{\tilde{t}}$ with 
\[ \frac{1}{M} \varrho_1(x)\leq \bn x{\tilde{s}} {\tilde{t}}{\tilde{t}} ^{p-1} \leq M \varrho_1(x) \quad\text{for all } x\in \bspace {-a}22. \] 
\end{proposition}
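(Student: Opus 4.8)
The plan is to follow the proof of \Cref{prop:varrho_1_wei} line by line, translating the weighted $\ell^p$-bookkeeping into the weighted description of Besov sequence norms provided by \eqref{eq:besov_weighted_identification}. By that identification $\X=\bspace 0pp=\lspace{\overline r}p$ with $\overline r_{(j,k)}=2^{j(\frac d2-\frac dp)}$ and $\XA=\bspace{-a}22=\lspace{\overline a}2$ with $\overline a_{(j,k)}=2^{-ja}$, and the penalty reads $\mathcal R(x)=\frac1p\sum_{(j,k)\in\Lambda}\overline r_{(j,k)}^p|x_{j,k}|^p$. Since $p>1$ this functional is Gâteaux differentiable, so $\partial\mathcal R(x)$ is the singleton $\{\xi\}$ with $|\xi_{(j,k)}|=\overline r_{(j,k)}^p|x_{j,k}|^{p-1}$, exactly as in the weighted case.

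First I would identify $\X_A'$. Using that the dual of a weighted Hilbert space $\lspace{\overline a}2$ is $\lspace{\overline a\inv}2$ (the analogue of \Cref{app:dual}), and that $\overline a\inv_{(j,k)}=2^{ja}$, so that $\lspace{\overline a\inv}2=\bspace a22$ again by \eqref{eq:besov_weighted_identification}, \Cref{prop:computation_rholin} gives $x\in K_1$ if and only if $\xi\in\X_A'$, in which case $\overline\varrho_1(x)=\lnorm\xi{\overline a\inv}2$. It then remains to evaluate this weighted $\ell^2$-norm and to match it with the claimed Besov norm.

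Next I would carry out the exponent arithmetic. Since $\xi_{(j,k)}^2=\overline r_{(j,k)}^{2p}|x_{j,k}|^{2p-2}=2^{jd(p-2)}|x_{j,k}|^{2p-2}$, the weight attached to $|x_{j,k}|^{2p-2}$ in $\lnorm\xi{\overline a\inv}2^2$ is $2^{2ja}\,2^{jd(p-2)}=2^{j(2a+(p-2)d)}$. With $\tilde t=2p-2$ and $\tilde s=\frac a{p-1}$ one checks $\tilde t\bigl(\tilde s+\frac d2-\frac d{\tilde t}\bigr)=2a+(p-2)d$, so that $\lnorm\xi{\overline a\inv}2^2=\bn x{\tilde s}{\tilde t}{\tilde t}^{\,2p-2}$ and hence $\overline\varrho_1(x)=\bn x{\tilde s}{\tilde t}{\tilde t}^{\,p-1}$. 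This shows $K_1=\bspace{\tilde s}{\tilde t}{\tilde t}$, and the two-sided estimate follows from $\frac1M\varrho_1\le\overline\varrho_1\le M\varrho_1$ in \Cref{prop:computation_rholin}.

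The step requiring the most care, beyond the routine exponent bookkeeping, is that this argument must hold for all $p\in(1,\infty)$, whereas \Cref{prop:varrho_1_wei} is stated only for $p<2$. For $p<2$ the claim is literally the special case of \Cref{prop:varrho_1_wei} for the weights $\overline a,\overline r$ above, whose resulting weight $\overline s=\overline a^{-\frac1{p-1}}\overline r^{\frac p{p-1}}$ has $j$-exponent $\tilde s+\frac d2-\frac d{\tilde t}$, giving $\lspace{\overline s}{2p-2}=\bspace{\tilde s}{\tilde t}{\tilde t}$. For $p\ge2$ one cannot invoke \Cref{prop:varrho_1_wei}, but the direct computation above never uses $p<2$ and therefore covers the full range.
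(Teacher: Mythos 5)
Your proposal is correct and follows essentially the same route as the paper, which proves this proposition by running the argument of \Cref{prop:varrho_1_wei} and identifying $\bn \xi a22$ with $\bn x{\tilde{s}}{\tilde{t}}{\tilde{t}}^{p-1}$ via \Cref{prop:computation_rholin} and \Cref{app:dual}; your exponent arithmetic ($\tilde t(\tilde s+\frac d2-\frac d{\tilde t})=2a+(p-2)d$) checks out. Your closing observation that the computation never uses $p<2$ and hence covers all $p\in(1,\infty)$, unlike the literal statement of \Cref{prop:varrho_1_wei}, is exactly the point the paper leaves implicit.
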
 
\begin{proof}
The proof works along the lines of the proof of \Cref{prop:varrho_1_wei} by identifying the expression for $\bn \xi a22$ with $\bn x{\tilde{s}} {\tilde{t}}{\tilde{t}}$.
\end{proof}
\subsubsection*{Computation of \texorpdfstring{$K_1$}{} for Besov \texorpdfstring{$0,2,q$}{}-penalties} \label{sec:besov02q}
Finally we compute $K_1$ for Example $3.$ with $\XA=\bspace {-a}22$ and $\neww{\X}=\bspace 02q$ with $q\in (1,\infty)$.
\begin{proposition}\label{prop:varrho_1_besov_2q} 
Let $\tilde{s}=\frac{a}{q-1}$ and $\tilde{q}=2q-2$. Then $K_1=\bspace {\tilde{s}}2{\tilde{q}}$ with 
\[ \frac{1}{M} \varrho_1(x)\leq \bn x{\tilde{s}} {2}{\tilde{q}} ^{q-1} \leq M \varrho_1(x) \quad\text{for all } x\in \bspace {-a}22. \]
\end{proposition}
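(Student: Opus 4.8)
The plan is to reuse the template behind \Cref{prop:varrho_1_wei} and \Cref{prop:varrho_1_besov_pp}: I reduce the computation of $\varrho_1$ to the dual-norm functional $\overline{\varrho}_1$ from \Cref{prop:computation_rholin} and then identify $\overline{\varrho}_1(x)$ with a power of the asserted Besov quasi-norm. Since \eqref{eq:XAequiA} holds with $\XA=\bspace{-a}{2}{2}$, and for $p=2$ the exponent $\frac d2-\frac dp$ vanishes, $\XA$ is nothing but the weighted sequence space with level weight $2^{-2ja}$ under the pairing $\langle x,\xi\rangle=\sum_{(j,k)\in\Lambda}x_{j,k}\xi_{j,k}$. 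As in the weighted case (cf.\ \Cref{app:dual}) its dual is therefore $\XA'=\bspace{a}{2}{2}$, so $\|\xi\|_{\XA'}=\bn{\xi}{a}{2}{2}$, and by \Cref{prop:computation_rholin} it suffices to evaluate this norm on the subgradient of $\mathcal{R}$.

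The genuinely new step, compared with Examples 1 and 2, is the subdifferential, because here the penalty $\mathcal{R}(x)=\frac1q\bn{x}{0}{2}{q}^q$ couples the coefficients inside each level $j$ through the inner $\ell^2$-norm. Writing $r_j:=(\sum_{k\in\Lambda_j}|x_{j,k}|^2)^{1/2}$, the map $\mathcal{R}$ is G\^{a}teaux differentiable for $q\in(1,\infty)$, and the chain rule applied level by level gives $\partial\mathcal{R}(x)=\{\xi\}$ with $\xi_{j,k}=r_j^{\,q-2}x_{j,k}$. Inserting this and summing first over $k\in\Lambda_j$ yields
\[ \bn{\xi}{a}{2}{2}^2=\sum\nolimits_{j\in\nat}2^{2ja}\,r_j^{\,2(q-2)}\sum\nolimits_{k\in\Lambda_j}|x_{j,k}|^2=\sum\nolimits_{j\in\nat}2^{2ja}\,r_j^{\,2(q-1)}. \]

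It then remains to match this with $\bn{x}{\tilde{s}}{2}{\tilde{q}}^{\,2(q-1)}$. For $\tilde{s}=\frac{a}{q-1}$ and $\tilde{q}=2(q-1)$ the defining level weight of $\bspace{\tilde{s}}{2}{\tilde{q}}$ is $2^{j\tilde{q}\tilde{s}}=2^{2ja}$ and the inner block is $r_j^{\,\tilde{q}}=r_j^{\,2(q-1)}$, so $\bn{x}{\tilde{s}}{2}{\tilde{q}}^{\,\tilde{q}}$ coincides termwise with the display above. Hence $\overline{\varrho}_1(x)=\bn{\xi}{a}{2}{2}=\bn{x}{\tilde{s}}{2}{\tilde{q}}^{\,q-1}$, and \Cref{prop:computation_rholin} yields simultaneously the claimed norm equivalence and the identification $K_1=\{x:\overline{\varrho}_1(x)<\infty\}=\bspace{\tilde{s}}{2}{\tilde{q}}$. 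I expect the only real obstacle to be the bookkeeping of the weight exponents — in particular confirming $\tilde{q}\tilde{s}=2a$ — while the differentiability of $\mathcal{R}$ and the identification of $\XA'$ are routine given the Hilbert-space structure of $\XA$.
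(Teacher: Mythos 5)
Your proposal is correct and follows essentially the same route as the paper: reduce to $\overline{\varrho}_1$ via \Cref{prop:computation_rholin}, identify $\X_A'=\bspace{a}{2}{2}$ through \Cref{app:dual}, compute the singleton subdifferential $\xi_{j,k}=r_j^{q-2}x_{j,k}$ levelwise, and verify $\bn{\xi}{a}{2}{2}=\bn{x}{\tilde{s}}{2}{\tilde{q}}^{q-1}$ using $\tilde{q}\tilde{s}=2a$. The paper's proof is just a terser version of this same computation (your subgradient formula even carries the sign of $x_{j,k}$ correctly, where the paper writes $|x_{j,k}|$), so there is nothing further to add.
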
 
\begin{proof}
 If $x\in \bspace 02q$, then $\neww{\partial\mathcal{R}}(x)= \{ \xi\}$ with $\xi_{j,k}=\left(\sum_{k^\prime} |x_{j,k^\prime} |^2 \right)^{\frac{q}{2}-1} | x_{j,k}|$. With $\overline{\varrho}_1$ \neww{as in \Cref{prop:computation_rholin} and using \Cref{app:dual}} we obtain   
${ \overline{\varrho}_1(x)= \bn \xi a22 = \bn x {\tilde{s}} 2 {\tilde{q}} ^{q-1}.}$
\Cref{prop:computation_rholin} yields the result.
\end{proof}
Note that \Cref{ass:K_1_quasi_banach} holds true for all three examples.

\subsection{Characterizations of \texorpdfstring{$K_\nu$}{}}
\subsubsection*{\texorpdfstring{$K_\nu$}{} via approximation by elements of \texorpdfstring{$K_1$}{} } \label{sec:gamma}
In \cite[Prop.~1]{BO:04} the authors point out that the set of elements satisfying the source condition \eqref{eq:classical_sc} is the set of possible minimizers of the Tikhonov functional. Therefore one might suggest that the approximation error of $\neww{x\in \X}$ by $x_\alpha\in R_\alpha(Ax)$ is determined by the best  approximation from the family of sets \[B_r:=\{ x\in \X \colon \varrho_1 (x) \leq r\} \quad\text{ with } r\geq 0 .\] We consider the best approximation error 
\[ \gamma_x \colon [0,\infty) \rightarrow [0,\infty) \quad\text{given by } \gamma_x(r)= \inf_{z\in B_r} \|Ax-Az\|_\Y.\] 
The function $\gamma_x$ is well defined as \Cref{cor:definitness_of_rholin} yields  
$\emptyset\neq \argmin_{z\in \neww{\X} }\mathcal{R}(z)\subset B_r$ for all $r\geq 0$. Moreover it is non increasing as $B_{r_1}\subseteq B_{r_2}$ for $r_1\leq r_2$. \\
The following proposition is the starting point to prove equivalence of H\"older-type bounds on  $\gamma_x$ and on $\|Ax-Ax_\alpha\|_\Y$.
\begin{proposition}\label{lem:approx_image_tik}  
Let \neww{$x\in \X$}, $\alpha>0$ and $x_\alpha\in R_\alpha(Ax)$. Then 
\[ \gamma_x\left( \frac{1}{\alpha} \|Ax-Ax_\alpha\|_\Y\right)\leq  \|Ax-Ax_\alpha\|_\Y \leq  4\gamma_x\left( \frac{1}{4 \alpha} \|Ax-Ax_\alpha\|_\Y\right) . \] 
\end{proposition}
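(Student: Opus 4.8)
The plan is to treat the two inequalities separately: the left one exhibits $x_\alpha$ itself as an admissible competitor in the infimum defining $\gamma_x$, while the right one is a genuine converse estimate that carries the main work.

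For the \emph{left inequality}, set $r := \frac{1}{\alpha}\|Ax - Ax_\alpha\|_\Y$. First I would invoke the first order optimality condition for $x_\alpha \in R_\alpha(Ax)$, which reads $A^\ast\omega \in \partial\mathcal{R}(x_\alpha)$ with $\omega := \frac{1}{\alpha}(Ax - Ax_\alpha)$. \Cref{prop:source_implies_linear} then yields $\varrho_1(x_\alpha) \leq \|\omega\|_\Y = r$, so $x_\alpha \in B_r$. Since $\gamma_x(r)$ is an infimum over $B_r$ and $x_\alpha$ is one of its members, $\gamma_x(r) \leq \|Ax - Ax_\alpha\|_\Y$, which is exactly the left bound. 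Note that only the easy direction \Cref{prop:source_implies_linear} is needed here, not the full characterization.

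For the \emph{right inequality}, write $D := \|Ax - Ax_\alpha\|_\Y$ and $r := \frac{D}{4\alpha}$; the goal is $D \leq 4\gamma_x(r)$. Fix any $z \in B_r$ and put $E := \|Ax - Az\|_\Y$; it suffices to prove $D \leq 2E$, as then passing to the infimum over $z \in B_r$ gives even $D \leq 2\gamma_x(r) \leq 4\gamma_x(r)$. Since $\gamma_x(r)$ depends only on the image $Az$, I may replace $z$ by an $\mathcal{R}$-minimal preimage of $Az$ without changing $E$ or leaving $B_r$ (the minimizers $R_\alpha(Az)$, and hence $\varrho_1$, depend only on $Az$ by \Cref{lem:prox_of_Q}), so I assume $z$ is $\mathcal{R}$-minimal in $A\inv(\{Az\})$. \Cref{lem:characterization_linear_case} then supplies $\omega \in \Y$ with $A^\ast\omega \in \partial\mathcal{R}(z)$ and $\|\omega\|_\Y = \varrho_1(z) \leq r$. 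Now the three ingredients combine: Tikhonov optimality of $x_\alpha$ tested against $z$ gives $\frac{1}{2\alpha}D^2 + \mathcal{R}(x_\alpha) \leq \frac{1}{2\alpha}E^2 + \mathcal{R}(z)$; the subgradient inequality gives $\mathcal{R}(z) - \mathcal{R}(x_\alpha) \leq \langle\omega, Az - Ax_\alpha\rangle \leq r\|Az - Ax_\alpha\|_\Y$; and the triangle inequality bounds $\|Az - Ax_\alpha\|_\Y \leq E + D$. Inserting $r = D/(4\alpha)$ and clearing denominators reduces everything to the scalar inequality $D^2 \leq 2E^2 + DE$, i.e. $(D - 2E)(D + E) \leq 0$, which forces $D \leq 2E$ since $D + E \geq 0$.

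I expect the main obstacle to be the correct bookkeeping in this final combination: one must feed the specific value $r = D/(4\alpha)$ into the source bound on $z$ so that the resulting quadratic in $D$ closes, and one must justify the reduction to an $\mathcal{R}$-minimal $z$ so that \Cref{lem:characterization_linear_case} applies. Once the inequality $D^2 \leq 2E^2 + DE$ is in hand the factoring is elementary, and it in fact yields the slightly stronger constant, the stated factor $4$ leaving some room to spare.
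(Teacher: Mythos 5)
Correct, and essentially the same argument as the paper: the left inequality by placing $x_\alpha$ in $B_r$ via the first-order optimality condition, the right by combining the Tikhonov comparison $T_\alpha(x_\alpha,Ax)\le T_\alpha(z,Ax)$ with the subgradient inequality for a source element of $z\in B_r$. Your two deviations are both improvements rather than gaps: you explicitly reduce to an $\mathcal{R}$-minimal preimage of $Az$ before invoking \Cref{lem:characterization_linear_case} (whose converse direction does require that minimality, a hypothesis the paper's proof applies to an arbitrary $z\in B_r$ without comment), and by substituting $r=\frac{1}{4\alpha}\|Ax-Ax_\alpha\|_\Y$ before taking the infimum and factoring the quadratic $(D-2E)(D+E)\le 0$ you obtain the sharper constant $2$ in place of $4$, whereas the paper infimizes over $z$ first and then applies $ab\le\tfrac12 a^2+\tfrac12 b^2$.
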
  
\begin{proof}
\neww{\Cref{lem:characterization_linear_case}} and the first order \neww{optimality} condition  ${\frac{1}{\alpha} A^\ast A (x-x_\alpha)}\in \partial\mathcal{R}(x_\alpha)$ provide ${\varrho_1(x_\alpha) \leq \frac{1}{\alpha}\|Ax-Ax_\alpha\|_\Y}$. This proves the first inequality by definition of $\gamma_g$.\\
To \neww{show} the second inequality let $z\in B_r$. By 
\Cref{lem:characterization_linear_case} there is $\omega\in \Y$ with $\|\omega\|_\Y\leq r$ and $A^\ast \omega\in \partial\mathcal{R}(z)$ hence \[ \mathcal{R}(z)-\mathcal{R}(x_\alpha)\leq \langle A^\ast \omega , z-x_\alpha \rangle \leq r \|Az-Ax_\alpha\|_\Y.\] 
From $2\alpha T_\alpha(x_\alpha,Ax)\leq 2\alpha T_\alpha(z,Ax)$ and the last inequality we deduce 
\begin{align*}
\|Ax-Ax_\alpha \|_\Y^2 & \leq \|Ax-Az \|_\Y^2 + 2\alpha r \|Az-Ax_\alpha\|_\Y \\ & \leq  \|Ax-Az \|_\Y^2 + 2\alpha  r  \|Ax-Az\|_\Y + 2\alpha r \|Ax-Ax_\alpha\|_\Y  
\end{align*}
Taking the infimum over $z\in B_r$ and estimating the third summand using \linebreak ${ab\leq \frac{1}{2} a^2 + \frac{1}{2} b^2}$ we obtain   
\begin{align*}
\|Ax-Ax_\alpha \|_\Y^2 & \leq \gamma_x(r)^2 + 2\alpha  r  \gamma_x(r) + 2 \alpha^2 r^2 + \frac{1}{2} \|Ax-Ax_\alpha \|_\Y^2   \\ 
& \leq 2 (\gamma_x(r)+\alpha  r )^2 + \frac{1}{2} \|Ax-Ax_\alpha \|_\Y^2.
\end{align*}
Hence $ \|Ax-Ax_\alpha \|_\Y \leq 2 \gamma_x(r)+2 \alpha  r$ and the choice $r=\frac{1}{4\alpha}\|Ax-Ax_\alpha \|_\Y$ yields the second inequality. 
\end{proof}
As announced we see equivalence of H\"older-type bounds on  $\gamma_x$ and on $\|Ax-Ax_\alpha\|_\Y$ as a consequence.
\begin{proposition} \label{prop:image_bound_vs_approx_bound}
Let $\nu\in (0,\infty)$ and $\neww{x\in \X}$.
The following statements are equivalent: 
\begin{romannum}
\item There exists a constant $c_1>0$ such that $\gamma_x(r)\leq c_1 r^{-\nu}$ for all $r>0$. 
\item There exists a constant $c_2>0$ such $\|Ax-Ax_\alpha\|_\Y\leq c_2 \alpha^\frac{\nu}{1+\nu}$ for all $\alpha>0$ and $x_\alpha\in R_\alpha(Ax).$
\end{romannum}
\end{proposition}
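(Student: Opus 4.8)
The plan is to read off both implications directly from \Cref{lem:approx_image_tik}, writing $E(\alpha):=\|Ax-Ax_\alpha\|_\Y$ for the image-space error, which is independent of the selection $x_\alpha\in R_\alpha(Ax)$ by \Cref{lem:prox_of_Q}. That proposition sandwiches $E(\alpha)$ between $\gamma_x\!\left(E(\alpha)/\alpha\right)$ and $4\gamma_x\!\left(E(\alpha)/(4\alpha)\right)$, so the only genuine task is to match the exponent $\nu$ appearing on the $\gamma_x$-side with the exponent $\frac{\nu}{1+\nu}$ appearing on the $E(\alpha)$-side; everything else is elementary algebra together with one case distinction.

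For the implication $(i)\Rightarrow(ii)$ I would start from the right-hand bound $E(\alpha)\leq 4\gamma_x\!\left(E(\alpha)/(4\alpha)\right)$. If $E(\alpha)=0$ the assertion is trivial, so assume $E(\alpha)>0$ and insert the hypothesis $\gamma_x(r)\leq c_1 r^{-\nu}$ at $r=E(\alpha)/(4\alpha)$. This yields $E(\alpha)\leq 4c_1(4\alpha)^{\nu}E(\alpha)^{-\nu}$, hence after isolating the power $E(\alpha)^{1+\nu}\leq 4^{1+\nu}c_1\,\alpha^{\nu}$ and finally $E(\alpha)\leq 4c_1^{1/(1+\nu)}\alpha^{\nu/(1+\nu)}$, which is exactly $(ii)$.

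For $(ii)\Rightarrow(i)$ I would fix $r>0$ and choose the regularization parameter explicitly as $\alpha=(c_2/r)^{1+\nu}$. As recalled in the proof of \Cref{lem:approx_image_tik}, the first order optimality condition together with \Cref{lem:characterization_linear_case} gives $\varrho_1(x_\alpha)\leq E(\alpha)/\alpha$; combined with $(ii)$ this produces $\varrho_1(x_\alpha)\leq c_2\alpha^{-1/(1+\nu)}=r$, so that $x_\alpha\in B_r$. By the definition of $\gamma_x$ as an infimum over $B_r$ we then obtain $\gamma_x(r)\leq E(\alpha)\leq c_2\alpha^{\nu/(1+\nu)}=c_2^{1+\nu}r^{-\nu}$, which is $(i)$.

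The computations are short, so there is no serious obstacle; the two points that need care are the self-referential appearance of $E(\alpha)$ on both sides in $(i)\Rightarrow(ii)$ (resolved by isolating $E(\alpha)^{1+\nu}$ and by treating the trivial case $E(\alpha)=0$ separately) and the verification that the explicit choice $\alpha=(c_2/r)^{1+\nu}$ really places the minimizer $x_\alpha$ inside the constraint set $B_r$, since it is precisely this membership that makes the best-approximation estimate applicable.
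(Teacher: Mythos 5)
Your proposal is correct and follows essentially the same route as the paper: both implications are read off from \Cref{lem:approx_image_tik} with the identical parameter choices ($r=E(\alpha)/(4\alpha)$ for $(i)\Rightarrow(ii)$ and $\alpha=c_2^{1+\nu}r^{-(1+\nu)}$ for $(ii)\Rightarrow(i)$), yielding the same constants $c_2=4c_1^{1/(1+\nu)}$ and $c_1=c_2^{1+\nu}$. The only cosmetic difference is that in $(ii)\Rightarrow(i)$ you re-derive the membership $x_\alpha\in B_r$ from the optimality condition, whereas the paper simply invokes the first inequality of \Cref{lem:approx_image_tik} together with the monotonicity of $\gamma_x$; these are the same argument.
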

More precisely $(i)$ implies $(ii)$ with $c_2=4 c_1^\frac{1}{1+\nu}$ and $(ii)$ implies $(i)$ with $c_1= c_2^{1+\nu}.$
\begin{proof}
\item[(i)$\Rightarrow$(ii):]  
The second inequality in \Cref{lem:approx_image_tik} yields 
\[ \|Ax-Ax_\alpha\|_\Y\leq 4^{1+\nu}  c_1 \alpha^\nu  \|Ax-Ax_\alpha\|_\Y^{-\nu}\] 
Multiplying by $\|Ax-Ax_\alpha\|_\Y^{\nu}$ and taking the power $\frac{1}{1+\nu}$ yields \[ \|Ax-Ax_\alpha\|_\Y\leq 4 c_1^\frac{1}{1+\nu} \alpha^\frac{\nu}{1+\nu}\]
\item[(ii)$\Rightarrow$(i):] 
Let $r>0.$ For $\alpha=c_2^{1+\nu} r^{-(1+\nu)}$ we obtain 
$ \frac{1}{\alpha}\|Ax-Ax_\alpha\|_\Y\leq c_2 \alpha^{-\frac{1}{1+\nu}} =r.$ 
Hence the first inequality in \Cref{lem:approx_image_tik} yields
 \[ \gamma_x(r)\leq \gamma_x \left(\frac{1}{\alpha}\|Ax-Ax_\alpha\|_\Y \right) \leq \|Ax-Ax_\alpha\|_\Y\leq c_2 \alpha^\frac{\nu}{1+\nu} = c_2^{1+\nu}r^{-\nu}.\]
\end{proof}

\subsubsection*{\texorpdfstring{$K_\nu$}{} via real interpolation}
Again we assume $\XA$ is a Banach space such that \eqref{eq:XAequiA} holds true.
The next lemma shows that under \Cref{ass:K_1_quasi_banach} the spaces $\left(\XA,K_1\right)_{\theta,\infty}$  classify the image space approximation precision. 
\begin{proposition}[$K_\nu$ as a real interpolation space] \label{prop:K_nu_under_quasi_assumption}
Suppose \Cref{ass:K_1_quasi_banach} holds true. Let $\theta\in (0,1)$ and $\nu:=\frac{\theta}{(1-\theta)(u-1)+\theta}$. 
We have $K_\nu=(\XA, K_1)_{\theta,\infty}$ with 
\[ C_1 \|x\|_{\left(\XA,K_1\right)_{\theta,\infty}} \leq \varrho_\nu (x)^\frac{(1-\theta)(u-1)+\theta}{u-1}\leq C_2  \|x\|_{\left(\XA,K_1\right)_{\theta,\infty}} \quad\text{for all } x\in \left(\XA,K_1\right)_{\theta,\infty}\]
with constants $C_1$, $C_2 > 0$ depending only on $u, \theta$ and $M$.  
\end{proposition}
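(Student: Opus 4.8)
The plan is to route the statement through the best-approximation function $\gamma_x(r)=\inf_{\varrho_1(z)\le r}\|Ax-Az\|_\Y$ of \Cref{sec:gamma} and to invoke \Cref{prop:image_bound_vs_approx_bound}, which already translates H\"older decay of $\gamma_x$ into the image-space rate measured by $\varrho_\nu$. Set $\nu_\gamma:=\frac{\theta}{(1-\theta)(u-1)}$; then $\frac{\nu_\gamma}{1+\nu_\gamma}=\nu$ and $(1+\nu_\gamma)(1-\theta)=\frac{(1-\theta)(u-1)+\theta}{u-1}$, so $\nu_\gamma$ is exactly the parameter needed in \Cref{prop:image_bound_vs_approx_bound} and $(1+\nu_\gamma)(1-\theta)$ is the target exponent. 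Writing $A_\gamma:=\sup_{r>0}r^{\nu_\gamma}\gamma_x(r)$, the explicit constants in \Cref{prop:image_bound_vs_approx_bound} give $4^{-(1+\nu_\gamma)}\varrho_\nu(x)^{1+\nu_\gamma}\le A_\gamma\le\varrho_\nu(x)^{1+\nu_\gamma}$. It then remains to show that $A_\gamma^{1-\theta}$ agrees, up to constants depending only on $M,\theta,u$, with $\|x\|_{(\XA,K_1)_{\theta,\infty}}$.

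First I would rewrite the $K$-functional of the couple $(\XA,K_1)$ in terms of $\gamma_x$. Replacing $\|x-z\|_\XA$ by $\|Ax-Az\|_\Y$ via \eqref{eq:XAequiA} and $\|z\|_\textrm{lin}$ by $\varrho_1(z)^{1/(u-1)}$ via \Cref{ass:K_1_quasi_banach}, and grouping the competitors $z$ by the level $r=\varrho_1(z)$, produces constants $c(M),C(M)>0$ with
\[ c(M)\,\Phi(t)\le K(x,t)\le C(M)\,\Phi(t),\qquad \Phi(t):=\inf_{r\ge0}\bigl(\gamma_x(r)+t\,r^{1/(u-1)}\bigr). \]
Here $\gamma_x$ and $\varrho_\nu$ are read as functionals on $\XA$, which is harmless since both depend on $x$ only through $Ax\in\Y$.

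The heart of the matter is a one-dimensional power-transform estimate for the non-increasing function $\gamma_x$, namely $A_\gamma^{1-\theta}\le\sup_{t>0}t^{-\theta}\Phi(t)\le C_{\theta,u}\,A_\gamma^{1-\theta}$. For the upper bound I insert $\gamma_x(r)\le A_\gamma r^{-\nu_\gamma}$ into $\Phi$ and minimize $r\mapsto A_\gamma r^{-\nu_\gamma}+t\,r^{1/(u-1)}$ explicitly; the choice of $\nu_\gamma$ makes the minimizing scale yield $\Phi(t)\le C_{\theta,u}A_\gamma^{1-\theta}t^\theta$. For the lower bound I fix $r_0$ nearly attaining $A_\gamma$, use monotonicity of $\gamma_x$ to get $\Phi(t)\ge\min\{\gamma_x(r_0),t\,r_0^{1/(u-1)}\}$, and balance the two terms with $t=\gamma_x(r_0)\,r_0^{-1/(u-1)}$; the identity $\nu_\gamma+\frac{1}{u-1}=\frac{1}{(1-\theta)(u-1)}$ then collapses all powers of $r_0$ and gives $t^{-\theta}\Phi(t)\ge(A_\gamma-\varepsilon)^{1-\theta}$, whence the lower bound with constant $1$. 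Chaining the three two-sided estimates yields that $\|x\|_{(\XA,K_1)_{\theta,\infty}}=\sup_{t>0}t^{-\theta}K(x,t)$ is two-sidedly comparable to $\varrho_\nu(x)^{\frac{(1-\theta)(u-1)+\theta}{u-1}}$ with constants depending only on $M,\theta,u$, and the set identity $K_\nu=(\XA,K_1)_{\theta,\infty}$ follows because one quasi-norm is finite exactly when the other is.

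I expect the main obstacle to be the bookkeeping in this power transform: verifying that $\nu_\gamma$ is chosen so that the minimizing scale produces precisely the exponent $\theta$ and that the final exponent collapses to $\frac{(1-\theta)(u-1)+\theta}{u-1}$, and checking that the two independent occurrences of the constant $M$—one from \eqref{eq:XAequiA}, one from \Cref{ass:K_1_quasi_banach}—combine into a single constant depending only on $M,\theta,u$. A minor additional point is to confirm that $(\XA,K_1)$ is a genuine quasi-Banach couple so that the interpolation space and its $K$-functional are well defined, which is guaranteed by \Cref{ass:K_1_quasi_banach}.
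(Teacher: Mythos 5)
Your proposal is correct and follows essentially the same route as the paper's proof: both pass through the best-approximation function $\gamma_x$, use \Cref{prop:image_bound_vs_approx_bound} to convert between H\"older decay of $\gamma_x$ and $\varrho_\nu$, translate the $K$-functional into $\gamma_x$ plus the weight $r^{1/(u-1)}$ via \eqref{eq:XAequiA} and \Cref{ass:K_1_quasi_banach}, and balance the powers with the same choice of scales. The only difference is organizational — you chain three two-sided comparisons through the intermediate quantity $A_\gamma$, whereas the paper runs the two directions separately with explicit choices of $r(t)$ and $t(r)$ — and your exponent bookkeeping checks out.
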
 
\begin{proof}
Assume $\varrho:= \varrho_\nu (x) < \infty.$ \Cref{prop:image_bound_vs_approx_bound} provides the bound \[ \gamma_x(r)\leq \varrho^{\frac{(1-\theta)(u-1)+\theta}{(1-\theta)(u-1)}} r^{-\frac{\theta}{(1-\theta)(u-1)}} \]
Let $t>0$. We choose $r:=\varrho^{(1-\theta)(u-1)+\theta} t^{-(1-\theta)(u-1)}$. 
If $\varepsilon>0$ then there exists $z\in K_1$ with $\varrho_1 (z)\leq r$ and $\|Ax-Az\|_\Y\leq \gamma_x(r) +\varepsilon.$ Therefore we obtain 
\begin{align*}
K(x,t)\leq \|x-z\|_\XA + t \|z\|_{\textrm{lin}} & \leq M (\gamma_x(r) +\varepsilon) + t M^\frac{1}{u-1} r^\frac{1}{u-1}. 
\end{align*}
For $\varepsilon\rightarrow 0$ we obtain 
\[ K(x,t)\leq M \varrho^{\frac{(1-\theta)(u-1)+\theta}{(1-\theta)(u-1)}} r^{-\frac{\theta}{(1-\theta)(u-1)}} +  t M^\frac{1}{u-1} r^\frac{1}{u-1} = \left(M+M^\frac{1}{u-1} \right) \varrho^\frac{(1-\theta)(u-1)+\theta}{u-1} t^\theta.\] 
This proves the first inequality. \\
Assume $n:= \|x\|_{\left(\XA,K_1\right)_{\theta,\infty}} <\infty$. We prove a bound on $\gamma_x$ and apply \Cref{prop:image_bound_vs_approx_bound}. Let $r>0$. We choose $t:= 2 M^{\frac{1}{(1-\theta)(u-1)}} n^\frac{1}{1-\theta} r^{-\frac{1}{(1-\theta)(1-u)}}$. Since $2^{1-\theta}>1$ there exists $z\in \XA$ such that
 \begin{align*}
  M\inv \|Ax-Az\| + t M^{-\frac{1}{u-1}} \varrho_1(z) ^\frac{1}{u-1}  & \leq \|x-z\|_\XA + t \|z\|_{\textrm{lin}}  \\ 
  &\leq 2^{1-\theta} K(x,t)  \leq 2^{1-\theta} n t^\theta = t M^{-\frac{1}{u-1}} r^\frac{1}{u-1}
\end{align*}   
Neglecting the first summand on the left hand side we obtain $\varrho_1(z)\leq r$. Therefore 
\[\gamma_x (r) \leq \|Ax-Az\|_\Y \leq  M 2^{1-\theta} n t^\theta = 2 M^{\frac{(1-\theta)(u-1)+\theta}{(1-\theta)(u-1)}} n^\frac{1}{1-\theta} r^{-\frac{\theta}{(1-\theta)(u-1)}}.\]
\Cref{prop:image_bound_vs_approx_bound} yields 
$ \varrho_\nu (x)\leq 8M n^\frac{u-1}{(1-\theta)(u-1)+\theta} $.
\end{proof}
\begin{remark}
As already exposed in \Cref{ex:easyvarrho} we cannot expect \Cref{ass:K_1_quasi_banach} to hold true for $\ell^1$-type norms like Besov $0,1,1$ or $0,2,1$-norms. Nevertheless one may use \Cref{prop:image_bound_vs_approx_bound} directly to characterize the sets $K_\nu$ in this case. Applying \Cref{thm:minimax} then reproduces the convergence rates results for the $0,2,1$-penalty in \cite{HM:19} and for weighed $\ell^1$-penalties in \cite{MH:20} in the case of linear operators. 
\end{remark}
\subsection{Error bounds}\label{sec:error_Banach}
We apply \Cref{thm:minimax} to obtain error bounds \neww{measured in the norm of certain Banach spaces $\XL$ with a continuous embedding $\XL\subset \XA$}.\\
\neww{To this end we consider the loss function $L\colon \X \times\X\rightarrow [0,\infty]$ given by $L(x_1,x_2)=\|x_1-x_2\|_{\XL}$ if $x_1-x_2\in \XL$ and $L(x_1,x_2)=\infty$ if $x_1-x_2\notin \XL$}
\neww{Before we prove \Cref{thm:error_bounds}} we state a proposition that characterizes for which spaces $\XL$ H\"older-type bounds on the modulus of continuity on balls of a given \neww{quasi-}Banach space $\X_S\subset \X$ are satisfied. 
\begin{proposition}[bound on the modulus]\label{prop:bound_on_modulus}
Let $\X_S\neww{\subset \X}$ be \neww{a} quasi-Banach space and $\X_L$ a Banach space with continuous embeddings $\X_S\subset \X_L\subset \XA$ and $e\in (0,1)$. For $\varrho>0$ we denote \[K_{\X_S}^\varrho:= \left\{ x\in \X_S\colon \|x\|_{\X_S} \leq \varrho \right\}.\] The following statements are equivalent: 
\begin{romannum}
\item There is a continuous embedding $\left(\XA, \X_S\right)_{e,1}\subset \X_L$. 
\item There exists a constant $c>0$ with $ \Omega(\delta, K_{\X_S}^\varrho)\leq c \varrho^{e} \delta^{1-e}$ for all $\delta, \varrho>0.$
\end{romannum}
\end{proposition}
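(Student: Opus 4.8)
The plan is to reduce the equivalence to a single power-type (Gagliardo--Nirenberg-type) interpolation inequality: there exists $c'>0$ such that
\[ \|x\|_{\X_L}\leq c'\,\|x\|_{\X_S}^{e}\,\|x\|_{\XA}^{1-e}\qquad\text{for all } x\in\X_S. \]
I would first show that this inequality is equivalent to $(ii)$, which is essentially bookkeeping with the definitions together with the homogeneity of both sides under $x\mapsto\lambda x$. For the implication from the inequality to $(ii)$, given $x_1,x_2\in K_{\X_S}^\varrho$ with $\|Ax_1-Ax_2\|_\Y\leq\delta$, the difference $x:=x_1-x_2$ lies in $\X_S$ and satisfies $\|x\|_{\X_S}\leq 2c_S\varrho$ (with $c_S$ the quasi-norm constant of $\X_S$) and $\|x\|_{\XA}\leq M\|Ax\|_\Y\leq M\delta$ by \eqref{eq:XAequiA}; inserting these bounds and taking the supremum yields $\Omega(\delta,K_{\X_S}^\varrho)\leq c\,\varrho^{e}\delta^{1-e}$. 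Conversely, for fixed $x\in\X_S\setminus\{0\}$ the choice $x_1=x$, $x_2=0$, $\varrho=\|x\|_{\X_S}$, $\delta=\|Ax\|_\Y$ gives $\|x\|_{\X_L}=L(x,0)\leq\Omega(\delta,K_{\X_S}^\varrho)\leq c\,\|x\|_{\X_S}^{e}\|Ax\|_\Y^{1-e}$, and $\|Ax\|_\Y\leq M\|x\|_{\XA}$ recovers the displayed inequality.

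For the implication from $(i)$ to the power inequality I would estimate the $K$-functional directly. For $x\in\X_S$ the competitors $z=x$ and $z=0$ give $K(x,t)\leq\min(\|x\|_{\XA},\,t\|x\|_{\X_S})$; splitting the integral $\|x\|_{(\XA,\X_S)_{e,1}}=\int_0^\infty t^{-e}K(x,t)\,\frac{dt}{t}$ at $t_0=\|x\|_{\XA}/\|x\|_{\X_S}$ and integrating the two elementary bounds gives
\[ \|x\|_{(\XA,\X_S)_{e,1}}\leq\frac{1}{e(1-e)}\,\|x\|_{\X_S}^{e}\|x\|_{\XA}^{1-e}. \]
Composing with the continuous embedding $(\XA,\X_S)_{e,1}\subset\X_L$ furnished by $(i)$ then yields the power inequality.

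The main obstacle is the converse direction, deducing $(i)$ from the power inequality, since that inequality is only assumed on $\X_S$ while $(\XA,\X_S)_{e,1}$ is strictly larger than $\X_S$ in general; the bound must therefore be propagated to the whole interpolation space. Here I would invoke the $J$-method together with the equivalence theorem (see \cite{Bergh1976}): every $x\in(\XA,\X_S)_{e,1}$ admits a representation $x=\sum_{n\in\mathbb{Z}}u_n$ with $u_n\in\X_S$, convergence in $\XA$, and $\sum_{n}2^{-ne}J(2^n,u_n)\leq C\|x\|_{(\XA,\X_S)_{e,1}}$, where $J(t,u)=\max(\|u\|_{\XA},\,t\|u\|_{\X_S})$. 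Applying the power inequality to each $u_n$ and using $\|u_n\|_{\XA}^{1-e}\|u_n\|_{\X_S}^{e}\leq 2^{-ne}J(2^n,u_n)$ gives $\|u_n\|_{\X_L}\leq c'2^{-ne}J(2^n,u_n)$, whence $\sum_n\|u_n\|_{\X_L}\leq c'C\|x\|_{(\XA,\X_S)_{e,1}}<\infty$.

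It remains to pass from the summability of $\|u_n\|_{\X_L}$ to the conclusion. Since $\X_L$ is complete the series $\sum_n u_n$ converges absolutely in $\X_L$; since $\X_L\subset\XA$ continuously, its $\X_L$-limit coincides with the $\XA$-limit $x$, so that $x\in\X_L$ with $\|x\|_{\X_L}\leq c'C\|x\|_{(\XA,\X_S)_{e,1}}$, which is precisely the continuous embedding in $(i)$. The delicate points I expect are that $\X_S$ is merely quasi-Banach, so one must use the quasi-Banach version of the $J/K$ equivalence theorem and track the quasi-norm constant $c_S$ throughout, and the identification of the two limits, which rests exactly on the continuity of the embedding $\X_L\hookrightarrow\XA$.
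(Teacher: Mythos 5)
Your proposal is correct and follows essentially the same route as the paper: both reduce the equivalence to the multiplicative interpolation inequality $\|z\|_{\XL}\leq C\|z\|_{\XA}^{1-e}\|z\|_{\X_S}^{e}$ on $\X_S$ and then handle the passage to and from statement $(ii)$ by the same elementary bookkeeping (quasi-triangle inequality plus \eqref{eq:XAequiA} in one direction, testing the modulus with the pair $(x,0)$ in the other). The only difference is that the paper obtains the equivalence of $(i)$ with this inequality by citing \cite[Thm.~3.11.4]{Bergh1976}, whereas you re-derive that theorem via the elementary $K$-functional bound and the $J$-method representation; your derivation is sound, including the points you flag about the quasi-Banach setting and the identification of the $\X_L$- and $\XA$-limits.
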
 
\begin{proof}
By \cite[Sec.~3.5, Thm.~3.11.4]{Bergh1976} statement $(i)$ is equivalent to an interpolation inequality  
\begin{align}\label{eq:interpolation_modulus_bound}
\|z\|_{\XL} \leq C \|z\|_{\XA}^{1-e} \|z\|_{\X_S} ^e \quad\text{for all} \quad z\in \X_S .
\end{align}
Let $x_1, x_2\in K_{\X_S}^\varrho$ with $\|Ax_1- Ax_2\|_\Y\leq \delta$. The quasi-triangle inequality yields \linebreak $\|x_1-x_2\|_{\X_S}\leq 2c\varrho$  and from \eqref{eq:XAequiA} we obtain $\|x_1 - x_2 \|_{\XA} \leq M\delta$. Hence \eqref{eq:interpolation_modulus_bound} with $z= x_1-x_2$ yields  $\|x_1-x_2\|_\XL\leq  C M^{1-e} (2c)^e \varrho^e \delta^{1-e}$. Taking the supremum over $x_1, x_2$ yields $(ii)$. \\ 
Assuming a bound on the modulus we obtain \eqref{eq:interpolation_modulus_bound} from
\begin{align*}
\|z\|_\XL\leq \Omega \left(M\|z\|_\XA, K_{\X_S}^{\|z\|_{\X_S}}\right).
\end{align*}
\end{proof}
Next we give the proof of \Cref{thm:error_bounds}.
\begin{proof}[Proof of \Cref{thm:error_bounds}]\label{pr:thm_error_Banach}
For $\nu$ as in \Cref{prop:K_nu_under_quasi_assumption} the second inequality therein yields 
\[x\in  K^{\varrho}_{(\XA,K_1)_{\theta,\infty}} \subset  K_\nu^{\overline{\varrho}}\]
with $\overline{\varrho}=\left(C_2 \varrho\right)^\frac{u-1}{(1-\theta)(u-1)+\theta}$.\\
In view of \Cref{thm:minimax} it remains to prove an upper bound on $\Omega(c_1\delta, K_\nu ^{c_2 \overline{\varrho}}) \leq C \varrho^\frac{\xi}{\theta} \delta^{1- \frac{\xi}{\delta}}$ for constants $c_1, c_2>0$ given therein. 
The first inequality in \Cref{prop:K_nu_under_quasi_assumption} provides  
\[ K_\nu ^{c_2 \overline{\varrho}}\subset K^{c_3\varrho}_{(\XA,K_1)_{\theta,\infty}} \quad\text{with}\quad c_3=C_1\inv C_2 c_2^\frac{(1-\theta)(u-1)+\theta}{u-1}.\]  
The reiteration theorem (see \cite[Thm.~3.11.5]{Bergh1976}) yields  
\begin{align}\label{eq:reiteration}
 \left(\XA,K_1\right)_{\xi,1}= \left( \XA , \left(\XA,K_1 \right)_{\theta,\infty}\right)_{\frac{\xi}{\theta},1}  
\end{align}
with equivalent quasi-norms. In particular $\left(\XA,K_1\right)_{\theta,\infty}\subset \left(\XA,K_1\right)_{\xi,1} \subset \XL \subset \XA$. Hence \Cref{prop:bound_on_modulus} with $\X_S=\left(\XA,K_1\right)_{\theta,\infty}$ yields a constant $c_4$ with 
\begin{align*}
\Omega\left(c_1 \delta, K_\nu^{c_2 \overline{\varrho}}\right)\leq  
\Omega\left(c_1 \delta, K_{\left(\XA,K_1\right)_{\theta,\infty}}^{c_3 \varrho} \right)\leq C \varrho^\frac{\xi}{\theta} \delta^{1-\frac{\xi}{\theta}} 
\end{align*}
with $C=c_4 c_3^\frac{\xi}{\theta} c_1^{1-\frac{\xi}{\theta}}$.\\
\neww{For the discrepancy principle the bound $\Omega(d_1\delta, K_\nu ^{d_2 \overline{\varrho}}) \leq C \varrho^\frac{\xi}{\theta} \delta^{1- \frac{\xi}{\delta}}$ follows by replacing $c_1$ by $d_1$ and $c_2$ by $d_2$.}
\end{proof}
\begin{remark}
The statement in \Cref{rem:limiting_case} for the limiting case $\theta=1$ follows along the same lines leaving out the step involving the reiteration theorem.  
\end{remark}
\begin{remark}
The relation $\left(\XA,\neww{K_1}\right)_{\xi,1}\subset \neww{\XL}$ is necessary to obtain error bounds as in \Cref{thm:error_bounds} in the following sense: Assuming $\XL$ satisfies an error bound 
\[ \|x-\xh \|_\XL \leq C \varrho^e \delta^{1-e} \] for some $e\in (0,1)$ and all $x\in K_{(\XA,K_1)_{\theta,\infty}}^\varrho$ under some apriori parameter choice $\alpha=\alpha(\delta)$, then the lower bound \eqref{eq:lower_bound_modulus} yields 
\[ \frac{1}{2}\Omega(2\delta, K_{(\XA,K_1)_{\theta,\infty}}^\varrho) \leq \Delta_{R_{\alpha(\delta)}} (\delta,K_{(\XA,K_1)_{\theta,\infty}}^\varrho )\leq C \varrho^e \delta^{1-e}. 
\]
Thus the converse implication in \Cref{prop:bound_on_modulus} and the identity \eqref{eq:reiteration} provides \[\left(\XA, K_1 \right)_{\theta e ,1 } =  \left(\XA, \left(\XA,K_1\right)_{\theta,\infty} \right)_{e,1}\subset \neww{\XL.} \]  
\end{remark} 

\subsubsection*{Error bounds for weighted \texorpdfstring{$p$}{}-norm penalization} 
To prove \newww{\Cref{thm:rates_weighted}} we return to the setting of Example $1.$ 
\begin{proof}[Proof of \Cref{thm:rates_weighted}]\label{pr:weighed}
First note that \Cref{ass:K_1_quasi_banach} holds true by \Cref{prop:varrho_1_wei}. By \cite[Thm.~2, Rem.]{F:78} we have 
\begin{align*} 
 \lspace {\overline{r}} p  = \left( \lspace {\overline{a}}2 ,\lspace {\overline{s}} {2p-2} \right)_{\xi, p}=\left( \XA ,K_1 \right)_{\xi, p}  \quad\text{with}\quad \xi:=\frac{p-1}{p}. 
\end{align*}
Hence by \cite[Thm.~ 3.4.1 (b); Sec.~3.11]{Bergh1976} there is a continuous embedding \linebreak ${\left( \lspace {\overline{a}}2 ,\lspace {\overline{s}} {2p-2} \right)_{\xi, 1}\subset \lspace {\overline{r}} p}$.
Hence the choice $\XL= \lspace {\overline{r}} p $ satisfies the assumption of \Cref{thm:error_bounds}.
\\ 
The interpolation spaces  $ (\XA, K_1)_{\theta,\infty}= \left(\lspace {\overline{a}}2 , \lspace {\overline{s}} {2p-2}\right)_{\theta, \infty}$  are characterized by weighted weak $\ell^p$-spaces $\wspace \mu\nu t$ in the following manner:
\begin{align*}
\wspace \mu\nu t = (\lspace {\overline{a}}2 , \lspace{\overline{s}}{2p-2} )_{\theta,\infty} \text{ with }  \frac{1}{t}= \frac{1-\theta}{2}+\frac{\theta}{2p-2} ,\, \mu:=(\overline{a}^2 \overline{r}^{-p})^\frac{1}{2-p} ,\,\nu:=(\overline{a}\inv \overline{r})^\frac{2p}{2-p} 
\end{align*}
with equivalent quasi-norms (see \cite[Thm.~2]{F:78}). \\
The application of \Cref{thm:error_bounds} yields \Cref{thm:rates_weighted} and \Cref{rem:limiting_wei} follows from \Cref{rem:limiting_case}.
\end{proof}
\subsubsection*{Error bounds for Besov \texorpdfstring{$0,p,p$}{}-penalties}
Next we revisit Example $2.$

\begin{proof}[Proof of \Cref{thm:rates_besov_pp}]\label{pr:pp}
 Here \Cref{ass:K_1_quasi_banach} holds true by \Cref{prop:varrho_1_besov_pp}. 
The identification \cite[Thm.~2, Rem.]{F:78} for $p\neq 2$ and \cite[3.3.6.(9)]{Triebel2010} for $p=2$  yield
\begin{align}\label{eq:besov_error_norm}
\bspace 0pp = \left( \bspace {-a}22, \bspace {\tilde{s}}{\tilde{t}}{\tilde{t}} \right)_{\xi,p} = \left(\XA , K_1\right)_{\xi,p}\quad\text{with}\quad  \xi=\frac{p-1}{p}.
\end{align}
Hence the choice $\XL=\bspace 0pp$ satisfies the assumption in \Cref{thm:error_bounds}.\\
We apply \Cref{thm:error_bounds} to obtain \Cref{thm:rates_besov_pp}. 
\Cref{rem:limiting_pp} follows from \Cref{rem:limiting_case}.
\end{proof} 
Furthermore we prove the nestings given in \Cref{prop:nesting_pp}.
\begin{proof}[Proof of \Cref{prop:nesting_pp}] \label{pr:nesting}
Let $\theta=\frac{p-1}{p}\frac{s+a}{a}$. Then $\frac{1}{t}=\frac{1-\theta}{2}+\frac{\theta}{\tilde{t}}$. With \cite[Thm.~2, Rem.]{F:78} and \cite[Thm.~ 3.4.1 (b)]{Bergh1976} we obtain 
\[ \bspace stt = \left( \bspace {-a}22, \bspace {\tilde{s}} {\tilde{t}}{\tilde{t}}\right)_{\theta,t} \subset \left( \bspace {-a}22, \bspace {\tilde{s}} {\tilde{t}}{\tilde{t}}\right)_{\theta,\infty} = k_s \] 
in both cases. \\
Suppose $p<2$. Then $\tilde{t}<2$ and $t \in (\tilde{t},2)$. Let $\varepsilon>0$ such that $t-\varepsilon \in (\tilde{t},2)$.
There are $s< s^\prime< \tilde{s}$ and $\theta<\theta^\prime<1$ such that $\bspace{s^\prime} {t-\varepsilon}{t-\varepsilon}= \left(\bspace {-a}22, \bspace {\tilde{s}} {\tilde{t}}{\tilde{t}} \right)_{\theta^\prime, t-\varepsilon}.$
The reiteration theorem (see \cite[Thm.~3.11.5]{Bergh1976}) yields $k_s= \left(\bspace {-a}22 , \bspace{s^\prime} {t-\varepsilon}{t-\varepsilon}\right)_{\frac{\theta}{\theta^\prime},\infty}$. From $t-\varepsilon <2$ we obtain the continuous embeddings
$\bspace {-a}22 \subset \bspace {-a}2 \infty \subset  \bspace {-a}{t-\varepsilon}\infty$ (see \cite[3.2.4(1), 3.3.1(9)]{Triebel2010}).
Together with the interpolation result 
$\bspace s {t-\varepsilon}{\infty} = (\bspace{-a}{t-\varepsilon}\infty,\bspace r {t-\varepsilon}{\infty})_{\theta,\infty}$ 
(see \cite[3.3.6 (9)]{Triebel2010}) we obtain the second inclusion using \cite[2.4.1 Rem.~4]{Triebel2010}. By \cite[3.3.1(9)]{Triebel2010}) therefore obtain the second inclusion for all $0<\epsilon <t$.\\
For $p>2$ we have $\bspace {\tilde{s}} {\tilde{t}}{\tilde{t}}\subset \bspace {\tilde{s}} {2}{\tilde{t}}$ (see \cite[3.3.1(9)]{Triebel2010}). Hence \cite[3.3.6 (9)]{Triebel2010} and \cite[2.4.1 Rem.~4]{Triebel2010} yield $k_s \subset \left(\bspace {-a}22, \bspace {\tilde{s}} {2}{\tilde{t}}\right)_{\theta,\infty}= \bspace s2\infty.$
\end{proof}


\subsubsection*{Error bounds for Besov \texorpdfstring{$0,2,q$}{}-penalties}
\label{sec:besov_r2q_over}
Next we treat Example $3.$ 
\begin{proof}[Proof of \Cref{thm:rates_besov_2q}]\label{pr:2q}
Due to \Cref{prop:varrho_1_besov_2q}. 
 the \Cref{ass:K_1_quasi_banach} is satisfied. 
By \cite[3.3.6.(9)]{Triebel2010} we have  
\[ \bspace 022 = \left( \bspace {-a}22, \bspace {\tilde{s}} 2 {\tilde{q}} \right)_{\xi, 2} \quad\text{with}\quad \xi=\frac{q-1}{q}.\]
Therefore the choice $\XL= \bspace 022$ satisfies the assumption on $\XL$ in \Cref{thm:error_bounds}.\\ 
Moreover for $0<s<\frac{a}{q-1}$ we have 
\[ \bspace s2\infty = \left( \bspace {-a}22,  \bspace {\tilde{s}} 2 {\tilde{q}}\right)_{\theta, 2} \quad\text{with}\quad \theta= \frac{q-1}{q}\frac{s+a}{a}. \] 
\end{proof}
Hence the application of \Cref{thm:error_bounds} yields \Cref{thm:rates_besov_2q} and  \Cref{rem:limiting_case} yields \Cref{rem:limiting_2q}.
\subsubsection*{Error bounds for the Radon transform}
Finally we turn to proof of the convergence rate result with the Radon transform as forward operator. 
\neww{
\begin{proof}[Proof of \Cref{cor:radon}]\label{pr:radon}
\begin{enumerate}
\item
Since $a< \smax$ the synthesis operator $\wav$ is a norm isomorphism $\bspace {-a}22\rightarrow \Bspace {-a}22$. Hence the operator $R\circ \wav$ satisfies \eqref{eq:XAequiA} with $\XA=\bspace {-a}22$.\\ 
The inequality \newww{$\frac{d}{p}-\frac{d}{2}\leq a$} implies $\sigma_t-\smax\leq \sigma_t \leq s$. Hence $\wav\colon \bspace stt\rightarrow \Bspace stt$ is a norm isomorphism. Let $c_1$ be the operator norm of the inverse of $\wav$. Then $f=\wav x$ with $x\in \bspace stt$ and $\bn xstt \leq c_1 \varrho.$ Let $c_2$ be the embedding constant of $\bspace stt\subset k_s$ (see \Cref{prop:nesting_pp}). Then we obtain $x\in k_s$ with $\|x\|_{k_s}\leq c_1c_2 \varrho.$\\
With $\xh$ given by \Cref{thm:rates_besov_pp} we obtain the bound \[\bn {x-\xh}0 p p \leq \tilde{C} \varrho^\frac{a}{s+a} \delta^\frac{s}{s+a}\] with a constant $\tilde{C}>0$ independent of $f$, $\delta$ and $\varrho$. 
Hence the first bound in \Cref{cor:radon} \newww{implies} 
\[ \Bn {f-\fh}0pp = \Bn {\wav(x-\xh)} 0pp\leq c_3 \bn {x-\xh} 0pp\] 
with $c_3$ the operator norm of $\wav\colon \bspace 0pp \rightarrow \Bspace 0pp$.
The bound in the $L^p$-norm for $p\leq 2$ follows from the continuity of the embedding $\Bspace 0pp\subset L^p(\Omega)$ (see \cite{Triebel2010}).
\item This follows along the lines of the proof of $1.$ using  \Cref{thm:rates_besov_2q} instead of \Cref{thm:rates_besov_pp}. 
\end{enumerate}  
\end{proof}}
\section{Connection to other source conditions}
\neww{In this section we return to the setting of \Cref{sec:main_minimax}  and assume only the assumptions in the first lines of \Cref{sec:main_minimax}. The aim of this section is the proof \Cref{thm:compare}.}
\subsection{A preliminary: differentiabilty of the minimal value function} \label{sec:diff}
\begin{definition}[minimal value function]\label{def:minimal_val} For $g\in \Y$ we define
 \[ \vartheta_g\colon (0,\infty)\rightarrow \mathbb{R} \quad\text{ by}\quad \vartheta_g(\alpha)= \inf\nolimits_{x\in \dom(\mathcal{R})} T_\alpha(x,g)=\frac{1}{2\alpha} \|g - A\xh \|_\Y^2 + \mathcal{R}(\xh).\] 
independent of the choice $\xh\in R_\alpha(g).$
\end{definition}
The main result of this subsection is the differentiability of the minimal value function. The approximation error $\|g-A\xh\|_\Y$ is represented by calculus rules of $\vartheta_g.$ \\
Recall that the Moreau envelope function \neww{of some function} $\mathcal{Q}\colon \,\Y \rightarrow (-\infty,\infty]$ for $\alpha>0$ is given by 
\[ \mathcal{Q}_\alpha(g) = \inf\nolimits_{y\in \Y} \left( \frac{1}{2\alpha} \| g -y  \|_\Y^2  + \mathcal{Q}(y) \right) \]
and the infimum is uniquely attained at $\prox_{\alpha \mathcal{Q}}(g)\in \Y$.
The key ingredient is the following result by T. Str\"omberg:
\begin{lemma}(see \cite[Prop. 3(iii)]{Stromberg1996}) \label{prop:stomberg}
Let $\mathcal{Q}\colon \,\Y \rightarrow (-\infty,\infty]$ be convex, proper and lower semi-continuous. The family of Moreau envelope functions $\mathcal{Q}_\alpha \colon \Y \rightarrow \mathbb{R},$  $\alpha>0$ satisfies 
\[ \frac{\partial }{\partial \alpha} \mathcal{Q}_\alpha(g) = -\frac{1}{2} \|(\nabla \mathcal{Q}_\alpha)(g)\|_\Y^2.\] 
\end{lemma}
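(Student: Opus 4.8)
The statement is a special case of a general fact about Moreau envelopes, so the plan is to give a short self-contained argument rather than merely invoking \cite{Stromberg1996}. Throughout fix $g\in\Y$ and abbreviate $p_\alpha:=\prox_{\alpha\mathcal{Q}}(g)$, the unique minimizer defining $\mathcal{Q}_\alpha(g)$. The first step is to recall the classical gradient formula for the Moreau envelope in the base point, namely that $\mathcal{Q}_\alpha$ is differentiable with $\nabla\mathcal{Q}_\alpha(g)=\frac{1}{\alpha}\bigl(g-p_\alpha\bigr)$ (see \cite[Prop.~12.30]{Bauschke}). Consequently $\|\nabla\mathcal{Q}_\alpha(g)\|_\Y^2=\frac{1}{\alpha^2}\|g-p_\alpha\|_\Y^2$, so it suffices to prove that $\frac{\partial}{\partial\alpha}\mathcal{Q}_\alpha(g)=-\frac{1}{2\alpha^2}\|g-p_\alpha\|_\Y^2$.

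The core of the argument is a two-sided comparison obtained purely from optimality. For any $\alpha,\alpha'>0$ I would insert the (generally suboptimal) competitor $p_\alpha$ into the problem defining $\mathcal{Q}_{\alpha'}(g)$ and, symmetrically, $p_{\alpha'}$ into the problem defining $\mathcal{Q}_\alpha(g)$. Since the two functionals differ only through the coefficient $\frac{1}{2\alpha}$ of the quadratic term, the $\mathcal{Q}(p_\alpha)$ and $\mathcal{Q}(p_{\alpha'})$ contributions cancel upon subtraction, and one is left with the sandwich
\[
\Bigl(\frac{1}{2\alpha'}-\frac{1}{2\alpha}\Bigr)\|g-p_{\alpha'}\|_\Y^2
\le \mathcal{Q}_{\alpha'}(g)-\mathcal{Q}_\alpha(g)
\le \Bigl(\frac{1}{2\alpha'}-\frac{1}{2\alpha}\Bigr)\|g-p_{\alpha}\|_\Y^2,
\]
valid for all $\alpha,\alpha'>0$ with no sign restriction. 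Writing $\frac{1}{2\alpha'}-\frac{1}{2\alpha}=-\frac{\alpha'-\alpha}{2\alpha\alpha'}$ and dividing by $\alpha'-\alpha$ (keeping track of the sign, which merely swaps the two bounds) turns this into difference quotients squeezed between $-\frac{1}{2\alpha\alpha'}\|g-p_{\alpha'}\|_\Y^2$ and $-\frac{1}{2\alpha\alpha'}\|g-p_\alpha\|_\Y^2$.

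It then remains to let $\alpha'\to\alpha$. Both one-sided limits of the two bounds agree and equal $-\frac{1}{2\alpha^2}\|g-p_\alpha\|_\Y^2$ provided that $\|g-p_{\alpha'}\|_\Y\to\|g-p_\alpha\|_\Y$, i.e.\ provided the proximal point depends continuously on the parameter $\alpha$. This continuity is exactly the step I expect to be the main obstacle, since it is where the convexity and lower semi-continuity of $\mathcal{Q}$ enter essentially. I would establish it from the resolvent identity $p_\alpha=(\id+\alpha\partial\mathcal{Q})^{-1}(g)$ together with the monotonicity of $\partial\mathcal{Q}$, or alternatively cite the standard continuity of $\alpha\mapsto\prox_{\alpha\mathcal{Q}}(g)$ from monotone-operator theory (\cite{Bauschke}); in the present paper's setting one could also read it off the firm non-expansiveness exploited in \Cref{prop:firmly}. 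Once continuity is in hand, the squeeze theorem yields existence of the two-sided derivative and its claimed value, which completes the proof.
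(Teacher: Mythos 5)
Your argument is correct, but it is worth noting that the paper does not prove this lemma at all: it is imported verbatim from Str\"omberg \cite{Stromberg1996}, so what you have written is a genuine self-contained replacement for a citation rather than a variant of an in-paper proof. Your two-sided competitor bound is the standard and cleanest elementary route: inserting $p_\alpha$ into the $\alpha'$-problem and $p_{\alpha'}$ into the $\alpha$-problem does make the $\mathcal{Q}$-values cancel, the resulting sandwich holds with no sign restriction on $\alpha'-\alpha$, and the squeeze reduces everything to continuity of $\alpha'\mapsto\|g-\prox_{\alpha'\mathcal{Q}}(g)\|_\Y$. You are right that this last point is the only place where real work remains, and your proposed tool closes it: the resolvent identity plus non-expansiveness of $\prox_{\mu\mathcal{Q}}$ gives $\|\prox_{\alpha'\mathcal{Q}}(g)-\prox_{\alpha\mathcal{Q}}(g)\|_\Y\leq\left|1-\tfrac{\alpha'}{\alpha}\right|\,\|g-\prox_{\alpha\mathcal{Q}}(g)\|_\Y$, which is even locally Lipschitz, so both one-sided limits of your bounds coincide and the two-sided derivative exists everywhere. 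Compared with citing Str\"omberg, whose Proposition~3 is obtained in the broader context of the Hamilton--Jacobi/Lax--Oleinik structure of the Moreau--Yosida family, your proof buys self-containedness at essentially no cost and uses only the Hilbert-space facts already present in \cite{Bauschke}. A small remark: the paper's own \Cref{lem:diff} independently establishes convexity of $\alpha\mapsto\mathcal{Q}_\alpha(g)$ via Fenchel conjugation; combined with your sandwich and the monotonicity of $\alpha\mapsto\|g-\prox_{\alpha\mathcal{Q}}(g)\|_\Y$ from \Cref{app:monotone}, that would give yet another route, but it only yields differentiability off a countable set without the prox-continuity you supply, so your version is the one to keep.
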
  
We apply \Cref{prop:stomberg} to the function $\mathcal{Q}$ defined in \Cref{prop:introduceQ}. Note that due to \Cref{lem:prox_of_Q} we have 
\begin{align} \label{eq:moreau_Q}
\mathcal{Q}_\alpha(g)=\frac{1}{2\alpha}\|g-\prox_{\alpha \mathcal{Q}} (g)\|_\Y^2 + \mathcal{Q}(\prox_{\alpha\mathcal{Q}}(g))=  \vartheta_g(\alpha).
\end{align}
\begin{proposition}\label{lem:diff}
Let $g\in \Y$ and $\xh\in R_\alpha(g), \alpha>0$ any selection. The function
 $\vartheta_g$ is convex, non-increasing and continuously differentiable with 
\[ \vartheta_g^\prime(\alpha)= -\frac{1}{2\alpha^2} \|g-A\xh\|_\Y^2.\] 

\end{proposition}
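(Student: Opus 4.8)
The plan is to read off all three properties from the Moreau-envelope representation \eqref{eq:moreau_Q}, which identifies $\vartheta_g(\alpha)$ with $\mathcal{Q}_\alpha(g)$, the value of the Moreau envelope of $\mathcal{Q}$ at the fixed point $g$ with parameter $\alpha$. First I would differentiate in $\alpha$: \Cref{prop:stomberg} gives that $\alpha\mapsto\mathcal{Q}_\alpha(g)$ is differentiable with $\frac{\partial}{\partial\alpha}\mathcal{Q}_\alpha(g)=-\frac12\|(\nabla\mathcal{Q}_\alpha)(g)\|_\Y^2$. To turn this into the claimed formula I would invoke the standard identity for the gradient of a Moreau envelope, $(\nabla\mathcal{Q}_\alpha)(g)=\frac{1}{\alpha}\bigl(g-\prox_{\alpha\mathcal{Q}}(g)\bigr)$ (see \cite{Bauschke}), and then replace $\prox_{\alpha\mathcal{Q}}(g)$ by $A\xh$ using \Cref{lem:prox_of_Q}. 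This yields $(\nabla\mathcal{Q}_\alpha)(g)=\frac{1}{\alpha}(g-A\xh)$ and hence $\vartheta_g'(\alpha)=-\frac{1}{2\alpha^2}\|g-A\xh\|_\Y^2$, as asserted; since $A\xh$ is independent of the selection $\xh\in R_\alpha(g)$ by \Cref{lem:prox_of_Q}, the right-hand side is well defined.

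Monotonicity is then immediate, as the derivative is everywhere non-positive, so $\vartheta_g$ is non-increasing. For convexity I would \emph{not} argue from $T_\alpha(\cdot,g)$ directly, since an infimum of functions each convex in $\alpha$ need not be convex. Instead I would pass through joint convexity: the map $(\alpha,y)\mapsto\frac{1}{2\alpha}\|g-y\|_\Y^2+\mathcal{Q}(y)$ is jointly convex on $(0,\infty)\times\Y$, because $\frac{1}{2\alpha}\|g-y\|_\Y^2$ is the perspective of the quadratic $z\mapsto\frac12\|z\|_\Y^2$ evaluated at the affine argument $z=g-y$, and $\mathcal{Q}$ is convex. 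As $\vartheta_g(\alpha)$ is obtained by partial minimization over $y\in\Y$ of this jointly convex function, and partial minimization preserves convexity, $\vartheta_g$ is convex.

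Finally, continuous differentiability follows for free: a convex function on the open interval $(0,\infty)$ that is differentiable at every point is automatically $C^1$. The main obstacle, and the only place where care is genuinely needed, is the convexity step, since the naive argument via $T_\alpha(x,g)$ fails and one must route through the perspective-function viewpoint (equivalently, verify that $-\frac{1}{2\alpha^2}\|g-A\xh\|_\Y^2$ is non-decreasing in $\alpha$, which rests on the monotonicity of $\alpha\mapsto\frac{1}{\alpha}\|g-A\xh\|_\Y$ recorded in \Cref{app:monotone}). Everything else is a direct assembly of \eqref{eq:moreau_Q}, \Cref{prop:stomberg}, and \Cref{lem:prox_of_Q}.
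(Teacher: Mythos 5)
Your proof is correct, and the differentiability part is identical to the paper's: Str\"omberg's formula, the gradient identity $\nabla\mathcal{Q}_\alpha=\frac{1}{\alpha}(\id_\Y-\prox_{\alpha\mathcal{Q}})$, and \Cref{lem:prox_of_Q} to substitute $A\xh$. Where you diverge is convexity and monotonicity. The paper computes the Fenchel conjugate $(\mathcal{Q}_\alpha)^\ast=\mathcal{Q}^\ast+\frac{\alpha}{2}\|\cdot\|_\Y^2$ and uses biconjugation to write $\vartheta_g(\alpha)=\sup_{v}\left(\langle g,v\rangle-\mathcal{Q}^\ast(v)-\frac{\alpha}{2}\|v\|_\Y^2\right)$, a supremum of affine functions of $\alpha$ with non-positive slopes, which delivers convexity and non-increasingness in one stroke. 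You instead get monotonicity from the sign of the derivative and convexity from joint convexity of $(\alpha,y)\mapsto\frac{1}{2\alpha}\|g-y\|_\Y^2+\mathcal{Q}(y)$ (perspective of the quadratic composed with an affine map) followed by infimal projection; this is a valid, self-contained primal argument, and you are right that the naive ``infimum of convex-in-$\alpha$ functions'' reasoning would fail. The two routes are of comparable length; the paper's dual representation is slightly slicker, while yours avoids Fenchel conjugation entirely. One caution: your parenthetical alternative for convexity --- checking that $-\frac{1}{2\alpha^2}\|g-A\xh\|_\Y^2$ is non-decreasing via \Cref{app:monotone}.3 --- is circular in the paper's logical order, since the paper proves \Cref{app:monotone}.3 \emph{from} \Cref{lem:diff}. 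Stick with the perspective-function route (or prove the monotonicity of $\alpha\mapsto\frac{1}{\alpha}\|g-A\xh\|_\Y$ independently) if you want to use that variant.
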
 
\begin{proof}
The Moreau envelope function $\mathcal{Q}_\alpha$ is convex, real valued and continuous with the Fenchel conjugate  
$(\mathcal{Q}_\alpha)^\ast=\mathcal{Q}^\ast+\frac{\alpha}{2}\|\cdot\|_\Y^2$ (see \cite[Prop. 12.15; Prop. 13.21]{Bauschke}).
The biconjugation theorem implies
\[ \vartheta_g(\alpha)=\mathcal{Q}_\alpha(g)=\left(\mathcal{Q}^\ast+\frac{\alpha}{2}\|\cdot\|_\Y\right)^\ast(g)=\sup_{v\in \Y} \left(\langle g,v\rangle - \mathcal{Q}^\ast(v)-\frac{\alpha}{2} \|v\|_\Y^2\right). \]
Hence $\vartheta_g$ is convex and non-increasing being the supremum of affine non-increasing functions. \\
By \cite[Prop. 12.29]{Bauschke} $\mathcal{Q}_\alpha$ is Fréchet differentiable with $\nabla\mathcal{Q}_\alpha= \frac{1}{\alpha}(\id_\Y -\prox_{\alpha\mathcal{Q}})$. 
\Cref{prop:stomberg} yields differentiability of $\alpha\mapsto \mathcal{Q}_\alpha(g)$ with derivative $- \frac{1}{2} \|(\nabla\mathcal{Q}_\alpha)(g) \|^2$ for all $g\in \Y.$ 
Therefore, $\vartheta_g$ is differentiable and we conclude with \Cref{lem:prox_of_Q} 
\[\vartheta_g^\prime(\alpha)= - \frac{1}{2} \|(\nabla\mathcal{Q}_\alpha)(g) \|^2= -\frac{1}{2\alpha^2} \|g-\prox_{\alpha\mathcal{Q}}(g)\|_\Y = -\frac{1}{2\alpha^2} \|g-A\xh\|_\Y. \]
Finally, $\vartheta_g^\prime$ is continuous as $\vartheta_g$ is convex and differentiable. 
\end{proof}
\subsection{Defect function and its link to variational source conditions}
For the rest of this paper we always assume $x\in \dom(\mathcal{R})$ is $\mathcal{R}$-minimal in $A\inv (\{ Ax \})$  and $x_\alpha \in R_\alpha(Ax)$ for $\alpha>0$ is any selection of a minimizer for exact data.\\
If $A$ is injective then the minimality is trivially satisfied for all $x\in \dom(\mathcal{R})$.\\
As already mentioned we consider the \emph{defect of the Tikhonov functional}  
$\sigma_x \colon (0,\infty)\rightarrow [0,\infty)$ given by  
\[ \sigma_x(\alpha)= T_\alpha(x,Ax)-T_\alpha(x_\alpha, Ax)= \mathcal{R}(x)- \mathcal{R}(x_\alpha)- \frac{1}{2\alpha} \| Ax- Ax_\alpha\|_\Y^2.  \] 
The next proposition collects properties of the defect function. 
\begin{lemma} \label{prop:defect_props} 
\begin{remunerate}
\item $\sigma_x$ is concave, non-decreasing and continuously differentiable with $\sigma_x^\prime(\alpha)= \frac{1}{2\alpha^2} \|Ax- Ax_\alpha\|_\Y^2.$
\item We have 
$
\lim\nolimits_{\alpha \searrow 0} \sigma_x(\alpha) =0 $.
\item The function $(0,\infty)\rightarrow [0,\infty)$ given by $\alpha \mapsto \sigma_x\left( \frac{1}{\alpha}\right)$ is convex \neww{and continuous}.  
\end{remunerate}
\end{lemma}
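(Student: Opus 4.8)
The plan is to reduce everything to the minimal value function $\vartheta_{Ax}$ analysed in \Cref{lem:diff}. The key observation is that, since $x$ is $\mathcal{R}$-minimal in $A\inv(\{Ax\})$ and $T_\alpha(x,Ax)=\mathcal{R}(x)$, the defect takes the form
\[ \sigma_x(\alpha)=T_\alpha(x,Ax)-T_\alpha(x_\alpha,Ax)=\mathcal{R}(x)-\vartheta_{Ax}(\alpha), \]
so that $\sigma_x$ is a fixed constant minus the minimal value function. In particular $\sigma_x(\alpha)\geq 0$, because $x$ is admissible in the infimum defining $\vartheta_{Ax}(\alpha)$.

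For part 1 I would simply transport the conclusions of \Cref{lem:diff}. Since $\vartheta_{Ax}$ is convex, non-increasing and continuously differentiable with $\vartheta_{Ax}^\prime(\alpha)=-\tfrac{1}{2\alpha^2}\|Ax-Ax_\alpha\|_\Y^2$, the function $\sigma_x=\mathcal{R}(x)-\vartheta_{Ax}$ is concave, non-decreasing and continuously differentiable with $\sigma_x^\prime(\alpha)=-\vartheta_{Ax}^\prime(\alpha)=\tfrac{1}{2\alpha^2}\|Ax-Ax_\alpha\|_\Y^2$. No further work is needed here.

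For part 2 non-negativity is already noted, and the limit $\lim_{\alpha\searrow 0}\sigma_x(\alpha)=0$ is equivalent to $\vartheta_{Ax}(\alpha)\to\mathcal{R}(x)$; this is where the genuine content lies. I would invoke \Cref{app:minimal_limit} (used in the same way in the proof of \Cref{lem:characterization_linear_case}) to obtain $\vartheta_{Ax}(\alpha_n)\to\mathcal{R}(x)$ along some null sequence $\alpha_n\searrow 0$, that is $\sigma_x(\alpha_n)\to 0$. Since $\sigma_x$ is non-decreasing and non-negative by part 1, the limit $\lim_{\alpha\searrow0}\sigma_x(\alpha)=\inf_{\alpha>0}\sigma_x(\alpha)$ exists and must coincide with the sequential limit $0$.

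For part 3 I would substitute the Moreau-envelope representation \eqref{eq:moreau_Q}: viewing the result as a function of $\alpha$,
\[ \sigma_x(1/\alpha)=\mathcal{R}(x)-\mathcal{Q}_{1/\alpha}(Ax)=\mathcal{R}(x)-\inf_{y\in\Y}\Bigl(\tfrac{\alpha}{2}\|Ax-y\|_\Y^2+\mathcal{Q}(y)\Bigr). \]
For each fixed $y$ the map $\alpha\mapsto\tfrac{\alpha}{2}\|Ax-y\|_\Y^2+\mathcal{Q}(y)$ is affine, so the infimum over $y$ is concave in $\alpha$; hence $\alpha\mapsto\sigma_x(1/\alpha)$ is convex. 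Continuity then comes for free, since a finite convex function on the open interval $(0,\infty)$ is automatically continuous. The main obstacle is really confined to part 2: the passage $\sigma_x(\alpha_n)\to 0$ rests on the compactness of the sublevel sets and the $\mathcal{R}$-minimality of $x$ encapsulated in \Cref{app:minimal_limit}, whereas parts 1 and 3 are formal consequences of the structure of $\vartheta_{Ax}$ and its envelope representation.
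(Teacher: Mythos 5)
Your proof is correct. Parts 1 and 2 coincide with the paper's argument: the identity $\sigma_x(\alpha)=\mathcal{R}(x)-\vartheta_{Ax}(\alpha)$ reduces part 1 to \Cref{lem:diff}, and part 2 is exactly \Cref{app:minimal_limit} combined with the $\mathcal{R}$-minimality of $x$ (your extra remark that monotonicity upgrades the limit is harmless but not needed, since \Cref{app:minimal_limit} already gives the full limit as $\alpha\searrow 0$). Part 3 is where you genuinely diverge. The paper differentiates $h(\alpha)=\sigma_x(1/\alpha)$, computes $h^\prime(\alpha)=-\tfrac12\|Ax-Ax_{1/\alpha}\|_\Y^2$ from part 1, and then invokes the monotonicity of $\alpha\mapsto\|Ax-Ax_\alpha\|_\Y$ from \Cref{app:monotone} to conclude that $h^\prime$ is non-decreasing, hence $h$ convex. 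You instead substitute the Moreau-envelope representation \eqref{eq:moreau_Q} and observe that $\alpha\mapsto\mathcal{Q}_{1/\alpha}(Ax)=\inf_{y}\bigl(\tfrac{\alpha}{2}\|Ax-y\|_\Y^2+\mathcal{Q}(y)\bigr)$ is an infimum of affine functions of $\alpha$, hence concave, so $\sigma_x(1/\alpha)$ is convex, with continuity free from finiteness on an open interval. Your route is structurally in the same spirit as the paper's own proof of convexity of $\vartheta_g$ in \Cref{lem:diff} (supremum of affine functions via the Fenchel conjugate), avoids any appeal to \Cref{app:monotone} or to differentiability of $h$, and is arguably the cleaner argument; the paper's route has the side benefit of making the derivative of $h$ explicit, which is not needed for the statement.
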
 
\begin{proof}
We have $\sigma_x(\alpha)=\mathcal{R}(x)- \vartheta_{\neww{Ax}}(\alpha)$ with the minimal value function $\vartheta_{\neww{Ax}}$ from \Cref{def:minimal_val}. Hence $1.$ follows from \Cref{lem:diff}. \Cref{app:minimal_limit} yields $2.$  because of the $\mathcal{R}$-minimality assumption on $x$. \\ 
Let $h$ be the function given in $3.$ Then $h$ is differentiable and $1.$ yields 
\[ h^\prime(\alpha)= - \frac{1}{\alpha^2} \sigma_{\neww{x}}^\prime \left(\frac{1}{\alpha} \right) = -\frac{1}{2}\|Ax-Ax_{\frac{1}{\alpha}} \|_\Y. \]
By \ref{app:monotone}.2. the function $\alpha\mapsto \|Ax - Ax_\alpha\|_\Y$ is non-decreasing. Hence $h^\prime$ is non-decreasing. Therefore $h$ is convex. \neww{Continuity follows from the first statement.}
\end{proof}
Let $\alpha>0$. We write
\begin{align*}
 \sigma_x(\alpha) 
 = \sup\nolimits_{z\in \X} \left( \mathcal{R}(x)- \mathcal{R}(z)-\frac{1}{2\alpha} \|Ax-Az\|^2\right)
\end{align*}
to note a similarity to the \emph{distance function} in \cite[(3.1)]{F:18} and \cite[Chapter 12]{flemming:12b}  and \cite[Chapter 3]{Hofmann2019} used to derive variational source conditions of the form \eqref{eq:vsc}.
In \cite[Prop.~4]{Hofmann2019} its shown that a variational source condition \eqref{eq:vsc} implies bounds on the defect function $\sigma_x$. 	 
The next result provides a sharp connection between bounds on the defect function and variational source conditions. We introduce two partially ordered sets of functions
\begin{align*}
\Sigma & = \left\{ \sigma \colon (0,\infty) \rightarrow [0,\infty] \, \colon \sigma \text{ is proper, non-decreasing and }\sigma\left(1/\cdot\right) \text{ is convex l.s.c.}\right\}  \\ 
\Phi & = \left\{ \phi \colon [0,\infty) \rightarrow [0,\infty) \,\colon \phi \text{ concave and upper semi-continuous} \right\}  
\end{align*}
with pointwise ordering. Here l.s.c.\ is an abbreviation for lower semi-continuous. Moreover, we consider  
the map $\mathcal{F}\colon \Sigma \rightarrow \Phi$ given by  
\begin{align}\label{eq:defF}
(\mathcal{F}(\sigma)) (t):=\inf\nolimits_{\alpha>0} \left(\sigma(\alpha)+\frac{1}{2\alpha}t \right) \quad \text{for } t\geq 0.
\end{align}
In \Cref{prop:app_F} we prove that $\mathcal{F}$ is well-defined, order preserving and bijective. The order preserving inverse $\mathcal{F}\inv\colon  \Phi \rightarrow \Sigma$ is given by  
\begin{align}\label{eq:expression_for_F_inv}
 (\mathcal{F}\inv (\phi)) (\alpha)=\sup\nolimits_{t\geq 0} \left(\phi(t)-\frac{1}{2\alpha}t \right) \quad\text{for } \alpha>0.
\end{align}

By \Cref{prop:defect_props}. we have $\sigma_x\in \Sigma$. It turns out that $\neww{\phi}=\mathcal{F}(\sigma_x)$ is the minimal function in $\Phi$ satisfying \eqref{eq:vsc}.
\begin{lemma}\label{lem:vsc}
Let $\phi\in \Phi$. Then the following statements are equivalent: 
\begin{romannum}
\item $\mathcal{F}(\sigma_x) \leq \phi$ 
\item $\sigma_x\leq \mathcal{F}\inv (\phi)$
\item $\mathcal{R}(x)-\mathcal{R}(z)\leq \phi(\|Ax-Az\|_\Y^2) \quad\text{ for all } z\in \X $.
\end{romannum} 
In particular, we always have 
\begin{align}\label{eq:bestvsc}
 \mathcal{R}(x)-\mathcal{R}(z)\leq (\mathcal{F}(\sigma_x))(\|Ax-Az\|_\Y^2) \quad\text{ for all } z\in \X 
\end{align}
\end{lemma}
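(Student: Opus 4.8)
The plan is to combine two ingredients: the fact, established in \Cref{prop:app_F}, that $\mathcal{F}$ is an order isomorphism of $\Sigma$ onto $\Phi$ with inverse \eqref{eq:expression_for_F_inv}, and the supremum representation $\sigma_x(\alpha)=\sup_{z\in\X}\bigl(\mathcal{R}(x)-\mathcal{R}(z)-\frac{1}{2\alpha}\|Ax-Az\|_\Y^2\bigr)$ recorded above. Since $\sigma_x\in\Sigma$ by \Cref{prop:defect_props}, all three statements are meaningful, and I would establish them by closing the cycle $(i)\Rightarrow(iii)\Rightarrow(ii)\Rightarrow(i)$.

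First I would record the auxiliary bound \eqref{eq:bestvsc}, which holds unconditionally and is the crux of $(i)\Rightarrow(iii)$. Writing $t_z:=\|Ax-Az\|_\Y^2$, the supremum representation gives $\mathcal{R}(x)-\mathcal{R}(z)\leq\sigma_x(\alpha)+\frac{1}{2\alpha}t_z$ for every $z\in\X$ and $\alpha>0$; taking the infimum over $\alpha$ and recalling \eqref{eq:defF} yields $\mathcal{R}(x)-\mathcal{R}(z)\leq(\mathcal{F}(\sigma_x))(t_z)$, which is exactly \eqref{eq:bestvsc}. Under $(i)$ one has $(\mathcal{F}(\sigma_x))(t_z)\leq\phi(t_z)$, so chaining the two inequalities gives $(iii)$.

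For $(iii)\Rightarrow(ii)$ I would feed the pointwise bound of $(iii)$ back into the supremum representation: for fixed $\alpha>0$ each summand satisfies $\mathcal{R}(x)-\mathcal{R}(z)-\frac{1}{2\alpha}t_z\leq\phi(t_z)-\frac{1}{2\alpha}t_z\leq\sup_{t\geq0}\bigl(\phi(t)-\frac{1}{2\alpha}t\bigr)$, where the last step only uses $t_z\geq0$. Taking the supremum over $z$ and recalling \eqref{eq:expression_for_F_inv} gives $\sigma_x(\alpha)\leq(\mathcal{F}\inv(\phi))(\alpha)$, i.e.\ $(ii)$. Finally $(ii)\Rightarrow(i)$ follows by applying the order-preserving map $\mathcal{F}$ to $(ii)$ and using $\mathcal{F}\bigl(\mathcal{F}\inv(\phi)\bigr)=\phi$ from \Cref{prop:app_F}.

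I do not anticipate a real obstacle, since the heavy lifting is contained in \Cref{prop:app_F}. The only point demanding care is that the supremum defining $\mathcal{F}\inv(\phi)$ ranges over all $t\geq0$ while the defect only sees the attained values $t_z=\|Ax-Az\|_\Y^2$; this asymmetry is harmless because $\{t_z:z\in\X\}\subseteq[0,\infty)$, and it is precisely the reason $(iii)$ passes to $(ii)$ through an inequality rather than an identity.
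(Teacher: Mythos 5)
Your proof is correct and follows essentially the same route as the paper: \eqref{eq:bestvsc} is obtained by bounding $\mathcal{R}(x)-\mathcal{R}(z)$ through $\sigma_x(\alpha)+\frac{1}{2\alpha}\|Ax-Az\|_\Y^2$ and taking the infimum over $\alpha$, the implication $(iii)\Rightarrow(ii)$ comes from comparing with $\sup_{t\geq0}\bigl(\phi(t)-\frac{1}{2\alpha}t\bigr)$, and the remaining link is the order isomorphism of \Cref{prop:app_F}. The only cosmetic differences are that you close the cycle $(i)\Rightarrow(iii)\Rightarrow(ii)\Rightarrow(i)$ where the paper first notes $(i)\Leftrightarrow(ii)$ directly, and that in $(iii)\Rightarrow(ii)$ you bound the supremum over all $z$ rather than substituting the particular minimizer $z=x_\alpha$.
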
 
\begin{proof}
The equivalence of (i) and (ii) is immediate by \Cref{prop:app_F}. Next we  prove \eqref{eq:bestvsc}. To this end let $z\in \X$ and $\alpha>0$. Then 
\[ T_\alpha(x_\alpha, Ax)\leq \frac{1}{2\alpha} \|Ax-Az \|_\Y^2 + \mathcal{R}(z)\]
and $\mathcal{R}(x)= T_\alpha(x, Ax).$
We obtain 
\begin{align*}
\mathcal{R}(x)-\mathcal{R}(z) & = T_\alpha(x,Ax)-T_\alpha(x_\alpha, Ax) + T_\alpha(x_\alpha, Ax) -\mathcal{R}(z)  \\ 
& \leq \sigma_x(\alpha) + \frac{1}{2\alpha} \|Ax-Az \|_\Y^2.
\end{align*}
Taking the infimum over $\alpha$ on the right hand side yields \eqref{eq:bestvsc}. \\ 
Hence (i) implies (iii). 
Assuming (iii) we estimate 
\begin{align*}
\sigma_x(\alpha) &  \leq \phi(\|Ax-Ax_\alpha\|_\Y^2)-\frac{1}{2\alpha}\|Ax-Ax_\alpha\|_\Y^2  \\
 &\leq \sup\nolimits_{t\geq 0} \left(\phi(t)-\frac{1}{2\alpha}t \right)  =\left( \mathcal{F}\inv(\phi)\right)\neww{(\alpha)}. 
\end{align*}
\neww{Hence $\sigma_x\leq \mathcal{F}\inv(\phi)$. This yields $(i)$ as $\mathcal{F}$ is order preserving.}
\end{proof} 
\begin{remark} 
Inequality \eqref{eq:bestvsc} is sharp for $z=x_\alpha\in R_\alpha(g)$ for all $\alpha>0$. To see this note that by definition $(\mathcal{F}(\sigma_x))(t)\leq \sigma_x(\alpha)+\frac{1}{2\alpha}t$ for all $t\geq 0$ and $\alpha>0$. 
By \eqref{eq:bestvsc} we have 
\begin{align*}
\mathcal{R}(x)-\mathcal{R}(x_\alpha) & \leq \neww{ \left(\mathcal{F}(\sigma_x)\right)}\left(\|Ax-Ax_\alpha\|_\Y^2\right) \\ 
& \leq \sigma_x(\alpha)+ \frac{1}{2\alpha} \|Ax-Ax_\alpha\|_\Y^2  \\
& = \mathcal{R}(x)-\mathcal{R}(x_\alpha).
\end{align*}
\end{remark} 
\subsection{Link between defect function and image space approximation} 
The result of this subsection is a that $\sigma_x$ and hence also the smallest index function $\phi$ allowing for a variational source condition \eqref{eq:vsc} depends only on the net ${(\|Ax-Ax_\alpha\|_\Y)_{\alpha>0}}$.  Further we will exploit a condition when a bound $\|Ax-Ax_\alpha\|_\Y\leq \psi(\alpha)$ implies a bound on the defect function $\sigma_x$. 
\begin{lemma}
We have
\begin{align}\label{eq:delta_integrated_version}
\sigma_x(\alpha)& =\int_{0}^\alpha\frac{1}{2\beta^2} \|Ax-Ax_\beta \|_\Y^2 \, \mathrm{d}\beta  \quad\text{for all } \alpha > 0 .
\end{align}
\end{lemma}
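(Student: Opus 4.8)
The plan is to read off the integral as a primitive of $\sigma_x$, using the derivative already computed in \Cref{prop:defect_props} together with the boundary behaviour at zero. First I would recall from \Cref{prop:defect_props}.1.\ that $\sigma_x$ is continuously differentiable on $(0,\infty)$ with
\[ \sigma_x^\prime(\beta)=\frac{1}{2\beta^2}\|Ax-Ax_\beta\|_\Y^2 \quad\text{for every } \beta>0. \]
In particular the integrand on the right-hand side of \eqref{eq:delta_integrated_version} is exactly $\sigma_x^\prime$, and it is continuous and nonnegative on $(0,\infty)$.

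Next, for any $0<\varepsilon<\alpha$ the fundamental theorem of calculus gives
\[ \sigma_x(\alpha)-\sigma_x(\varepsilon)=\int_\varepsilon^\alpha \sigma_x^\prime(\beta)\,\mathrm{d}\beta = \int_\varepsilon^\alpha \frac{1}{2\beta^2}\|Ax-Ax_\beta\|_\Y^2\,\mathrm{d}\beta. \]
Letting $\varepsilon\searrow 0$, the left-hand side converges to $\sigma_x(\alpha)$ by \Cref{prop:defect_props}.2., which states $\lim_{\varepsilon\searrow 0}\sigma_x(\varepsilon)=0$. On the right-hand side the integrands are nonnegative and increase as $\varepsilon$ decreases, so by monotone convergence the improper integral $\int_0^\alpha \frac{1}{2\beta^2}\|Ax-Ax_\beta\|_\Y^2\,\mathrm{d}\beta$ exists in $[0,\infty]$ and equals the limit of the truncated integrals. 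Equating the two limits yields \eqref{eq:delta_integrated_version}.

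The only point requiring care is the convergence of the improper integral at the lower endpoint, where the factor $\beta^{-2}$ blows up; this is precisely controlled by the vanishing of $\sigma_x$ at zero established in \Cref{prop:defect_props}.2., which forces $\|Ax-Ax_\beta\|_\Y^2$ to decay fast enough relative to $\beta^2$ that the integral stays finite. Beyond that the argument is a direct application of the fundamental theorem of calculus and monotone convergence, so I do not expect any serious obstacle.
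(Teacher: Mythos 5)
Your proof is correct and follows exactly the paper's own argument: apply the fundamental theorem of calculus on $[\varepsilon,\alpha]$ using the derivative formula from \Cref{prop:defect_props}.1 and then let $\varepsilon\searrow 0$ via \Cref{prop:defect_props}.2. Your added remarks on monotone convergence and the finiteness of the improper integral are a welcome elaboration but do not change the route.
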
 
\begin{proof}
Let $0<\varepsilon< \alpha.$ \Cref{prop:defect_props}.1. yields
\[
\sigma_x(\alpha)-\sigma_x(\varepsilon) =\int_{\epsilon}^\alpha \sigma_x^\prime(\beta) \, \mathrm{d}\beta= \int_{\epsilon}^\alpha \frac{1}{2\beta^2} \|Ax-Ax_\alpha \|_\Y^2 \, \mathrm{d}\beta.\]
In view of \Cref{prop:defect_props}.2. the expression for $\sigma_x$ follows by taking the limit $\varepsilon\rightarrow 0$.
\end{proof}
\begin{proposition}[Image space approximation] \label{lem:image_space_vs_defect}
\begin{remunerate}
\item We have \[  \|Ax-Ax_\alpha\|_\Y\leq \sqrt{2\alpha \sigma_x(\alpha)} \quad\text{for all} \quad \alpha>0.\] 
\item Let $\psi \colon [0,\infty) \rightarrow [0,\infty)$ be continuous. Assume that there is a constant $C_\psi>0$ with 
\begin{align}\label{eq:condition_on_imagebound}
\int_0^\alpha \frac{1}{\beta} \psi(\beta)  \, \mathrm{d}\beta \leq C_\psi \psi(\alpha) \quad\text{for all } \alpha >0. 
\end{align}
Then a bound $\|Ax-Ax_\alpha\|_\Y\leq  \sqrt{2\alpha \psi(\alpha)}$ \neww{for all $\alpha>0$} implies 
$\sigma_x(\alpha)\leq C_\psi \psi(\alpha)$ \neww{for all $\alpha>0$}.
\end{remunerate} 
\end{proposition}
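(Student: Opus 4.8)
The plan is to read off both assertions from the integral representation $\sigma_x(\alpha)=\int_0^\alpha \frac{1}{2\beta^2}\|Ax-Ax_\beta\|_\Y^2\,\mathrm{d}\beta$ established in \eqref{eq:delta_integrated_version}. Throughout I abbreviate $r(\beta):=\|Ax-Ax_\beta\|_\Y$, so that the integrand is $r(\beta)^2/(2\beta^2)$ and the two quantities to be compared are $\sigma_x(\alpha)$ and $r(\alpha)^2$.

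For the first statement I would use the monotonicity recorded in \Cref{app:monotone}, namely that $\beta\mapsto r(\beta)/\beta$ is non-increasing. Hence $r(\beta)/\beta\geq r(\alpha)/\alpha$ for every $\beta\leq\alpha$, so that $r(\beta)^2/\beta^2\geq r(\alpha)^2/\alpha^2$ on the whole interval of integration. Inserting this lower bound into \eqref{eq:delta_integrated_version} gives
\[
\sigma_x(\alpha)\geq \int_0^\alpha \frac{r(\alpha)^2}{2\alpha^2}\,\mathrm{d}\beta=\frac{r(\alpha)^2}{2\alpha},
\]
which rearranges to $r(\alpha)\leq\sqrt{2\alpha\sigma_x(\alpha)}$. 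Alternatively, the same bound follows from the concavity of $\sigma_x$ together with $\lim_{\alpha\searrow0}\sigma_x(\alpha)=0$ from \Cref{prop:defect_props}: the tangent inequality at $\alpha$ evaluated at the origin reads $0=\sigma_x(0^+)\leq\sigma_x(\alpha)-\alpha\sigma_x'(\alpha)$, and $\sigma_x'(\alpha)=r(\alpha)^2/(2\alpha^2)$ then yields $\sigma_x(\alpha)\geq r(\alpha)^2/(2\alpha)$.

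For the second statement the assumed bound $r(\alpha)\leq\sqrt{2\alpha\psi(\alpha)}$ is equivalent to the pointwise estimate $r(\beta)^2/(2\beta^2)\leq\psi(\beta)/\beta$ for all $\beta>0$. Substituting this into \eqref{eq:delta_integrated_version} and then invoking the hypothesis \eqref{eq:condition_on_imagebound} gives
\[
\sigma_x(\alpha)=\int_0^\alpha\frac{r(\beta)^2}{2\beta^2}\,\mathrm{d}\beta\leq\int_0^\alpha\frac{1}{\beta}\psi(\beta)\,\mathrm{d}\beta\leq C_\psi\,\psi(\alpha),
\]
which is the claim.

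The only genuinely delicate point is choosing the correct comparison in the first part: the monotonicity one instinctively reaches for, namely that $\beta\mapsto r(\beta)$ is non-decreasing, produces an \emph{upper} bound on the integrand that makes $\int_0^\alpha r(\beta)^2/(2\beta^2)\,\mathrm{d}\beta$ diverge and is therefore useless here; one must instead exploit that $\beta\mapsto r(\beta)/\beta$ decreases. Once this is observed (or, equivalently, once one appeals to concavity and $\sigma_x(0^+)=0$), both parts are immediate, and the second part is a direct substitution requiring no further ideas.
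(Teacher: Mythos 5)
Your proposal is correct and essentially matches the paper: part 2 is verbatim the paper's argument (insert the pointwise bound into the integral representation and apply the hypothesis), and for part 1 the paper uses precisely your stated alternative, namely concavity of $\sigma_x$ together with $\sigma_x(0^+)=0$ to get $\sigma_x'(\alpha)=\frac{1}{2\alpha^2}\|Ax-Ax_\alpha\|_\Y^2\leq \frac{1}{\alpha}\sigma_x(\alpha)$. Your primary route for part 1 (integral representation plus the non-increase of $\beta\mapsto\|Ax-Ax_\beta\|_\Y/\beta$) is a valid equivalent variant, and your closing remark about which monotonicity to use is exactly the right caution.
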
 
\begin{proof}
\begin{remunerate}
\item By \Cref{prop:defect_props} the continuous extension of $\sigma_x$ to $[0,\infty)$ is concave. Hence the claim follows from
\[ \frac{1}{2\alpha^2} \|Ax - Ax_\alpha\|_\Y^2 = \sigma_x^\prime (\alpha)\leq \frac{1}{\alpha}\sigma_x(\alpha). \] 
\item Using \eqref{eq:delta_integrated_version} and \eqref{eq:condition_on_imagebound} we obtain 
\[ \sigma_x(\alpha)= \int_{0}^\alpha\frac{1}{2\beta^2} \|Ax-Ax_\beta \|_\Y^2 \, \mathrm{d}\beta  \leq \int_0^\alpha \frac{1}{\beta} \psi(\beta)  \, \mathrm{d}\beta \leq C_\psi \psi(\alpha).  \] 
\end{remunerate}
\end{proof}
\subsection{Equivalence theorem for H\"older-type bounds}\label{sec:equivalence}
\begin{proof}[Proof of \Cref{thm:compare}]
\item[(i)$\Rightarrow$(ii):] 
Consider the continuous function \[ \psi\colon [0,\infty)\rightarrow [0,\infty) \quad \text{ given by }  \psi(\alpha)= \frac{1}{2}c_1^2 \alpha^{2\nu-1}. \] 
Then $c_1 \alpha^\nu= \sqrt{2\alpha \psi(\alpha)}$ for all $\alpha>0.$ 
We have 
\[\int_0^\alpha \frac{1}{\beta} \psi(\beta)  \, \mathrm{d}\beta = \frac{1}{2} c_1^2 \int_0^\alpha \beta^{2\nu-2}  \, \mathrm{d}\beta= \frac{1}{2\nu-1} \psi(\alpha).\] 
Hence \eqref{eq:condition_on_imagebound} is satisfied with $C_\psi=\frac{1}{2\nu-1}$. \Cref{lem:image_space_vs_defect}. implies $ \sigma_x(\alpha)\leq \frac{c_1^2}{4\nu-2} \alpha^{2\nu-1}.$
\item[(ii)$\Rightarrow$(iii):]  
For $\sigma(\alpha):=c_2 \alpha^{2\nu-1}$ inserting $\alpha=(\frac{t}{2c})^\frac{1}{2\nu}$ yields
\[ (\mathcal{F}(\sigma)) (t) = \inf\nolimits_{\alpha>0} \left(c_2 \alpha^{2\nu-1}+\frac{1}{2\alpha}t \right) \leq 2^{\frac{1}{2\nu}}c_2^\frac{1}{2\nu} t^\frac{2\nu-1}{2\nu}\leq 2 c_2^\frac{1}{2\nu} t^\frac{2\nu-1}{2\nu}. \]  \Cref{lem:vsc} with $\phi=\mathcal{F}(\sigma)$ yields the claim. 
\item[(iii)$\Rightarrow$(i):](see also \cite[proof of Prop.~ 6]{Hofmann2019}) The first order condition \[ \xi_\alpha:= \frac{1}{\alpha} A^\ast A (x-x_\alpha)\in \partial\mathcal{R}(x_\alpha)\] provides 
\begin{align*}
\frac{1}{\alpha} \|Ax-Ax_\alpha\|_\Y^2  = \langle \xi_\alpha , x-x_\alpha \rangle  \leq \mathcal{R}(x) - \mathcal{R}(x_\alpha) \leq c_3 \|Ax-Ax_\alpha\|_\Y^\frac{2\nu-1}{\nu}.
\end{align*} 
Solving for $\|Ax-Ax_\alpha\|_\Y$ yields the claim. 
\end{proof} 
\section{Discussion and Outlook}
We close this paper by addressing some open questions and possible extensions. \\
The identification of $A\circ R_\alpha$ as a proximity mapping (see \Cref{sec:prox_mapping}) seems to be a new structural insight in convex regularization theory. It allows to apply convex analysis tools leading to interesting statements and new simple proofs (see e.g. \Cref{prop:firmly}, \Cref{lem:diff}, \Cref{prop:image_space_noisebounds}, \Cref{app:monotone}). 
So far the presented theory is limited to Hilbert space data fidelity terms. It would be interesting to generalize the arguments in \Cref{sec:mini_max} to Banach spaces $\Y$. A generalization to nonlinear operators seems even more challenging. \\
So far the presented theory is restricted to H\"older-type convergence rates. To also cover exponentially ill-posed problems it is of interest to investigate logarithmic convergence rates and source conditions. At first sight condition \eqref{eq:condition_on_imagebound} seems to fail for index functions not of H\"older-type. Thus it remains open whether an equivalence between image space approximation rates and variational source conditions remains valid for more general upper bounds.\\ 
As for approaches using variational source conditions the fastest convergence rate we are able to prove for a $p$-homogeneous penalty term is $\mathcal{O}(\frac{1}{p})$ (see \Cref{rem:limiting_wei}, \Cref{rem:limiting_pp} and \Cref{rem:limiting_2q}). It seems to be an interesting question to extend the presented approach to higher order convergence rates.\\ 
Another direction is the application to further concrete settings as in the three presented examples. An idea is to formulate a weaker version of  \Cref{ass:K_1_quasi_banach} by require a nesting $\X_{1a}\subseteq K_1 \subseteq \X_{1b}$ with quasi-Banach spaces $\X_{1a},\X_{1b}$ and try to prove a generalized  version of \Cref{thm:error_bounds}. The author believes that this approach would  cover e.g. Besov norm penalties with mixed indices $p,q$ with $p\neq 2$.
\appendix
\section{Elementary facts from regularization theory}
\begin{lemma}\label{app:monotone}
Let $g\in \Y$ and $\xh\in R_\alpha(g)$, $\alpha>0$ any selection.
\begin{remunerate}
\item The function $(0,\infty)\rightarrow \mathbb{R}$ given by $\alpha\mapsto \mathcal{R}(\xh)$ is non increasing.
\item The function $(0,\infty)\rightarrow [0,\infty)$ given by $\alpha\mapsto \|g-A\xh\|_\Y$ is non decreasing.
\item  The function $(0,\infty)\rightarrow [0,\infty)$ given by $\alpha\mapsto \frac{1}{\alpha}\|g-A\xh\|_\Y$ is non increasing.
\end{remunerate}  
\end{lemma}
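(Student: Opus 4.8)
The plan is to exploit that, by \Cref{lem:prox_of_Q}, both $A\xh$ and $\mathcal{R}(\xh)$ are independent of the selection $\xh\in R_\alpha(g)$, so that $d(\alpha):=\|g-A\xh\|_\Y$ and $r(\alpha):=\mathcal{R}(\xh)$ are genuine functions of $\alpha$ and the three claims are well posed. I would fix $0<\alpha_1<\alpha_2$, pick minimizers $\hat{x}_{\alpha_1}\in R_{\alpha_1}(g)$ and $\hat{x}_{\alpha_2}\in R_{\alpha_2}(g)$, and abbreviate $d_i:=d(\alpha_i)$, $r_i:=r(\alpha_i)$.

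For the statements 1 and 2 I would only use the two defining minimality inequalities $T_{\alpha_1}(\hat{x}_{\alpha_1},g)\le T_{\alpha_1}(\hat{x}_{\alpha_2},g)$ and $T_{\alpha_2}(\hat{x}_{\alpha_2},g)\le T_{\alpha_2}(\hat{x}_{\alpha_1},g)$. Rearranging each so as to isolate $r_2-r_1$ sandwiches this difference,
\[ \frac{1}{2\alpha_1}\bigl(d_1^2-d_2^2\bigr)\le r_2-r_1\le \frac{1}{2\alpha_2}\bigl(d_1^2-d_2^2\bigr). \]
Since $\tfrac{1}{2\alpha_1}>\tfrac{1}{2\alpha_2}$, comparing the two outer terms forces $d_1^2-d_2^2\le 0$, that is $d_1\le d_2$, which is statement 2. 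Inserting $d_1\le d_2$ into the right-hand bound yields $r_2-r_1\le 0$, which is statement 1.

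For statement 3 I would pass to first-order information through the proximity characterization $A\xh=\prox_{\alpha\mathcal{Q}}(g)$ of \Cref{lem:prox_of_Q}: the optimality condition for the proximal problem gives $v_\alpha:=\tfrac1\alpha(g-A\xh)\in\partial\mathcal{Q}(A\xh)$, and $\|v_\alpha\|_\Y=\tfrac1\alpha d(\alpha)$ is exactly the quantity to be controlled. Monotonicity of the subdifferential of the convex, proper, lower semi-continuous function $\mathcal{Q}$ (\Cref{prop:introduceQ}) gives $\langle v_{\alpha_1}-v_{\alpha_2},\,A\hat{x}_{\alpha_1}-A\hat{x}_{\alpha_2}\rangle\ge 0$. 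Substituting $A\hat{x}_{\alpha_i}=g-\alpha_i v_{\alpha_i}$ and applying Cauchy--Schwarz reduces this, with $a:=\|v_{\alpha_1}\|_\Y$ and $b:=\|v_{\alpha_2}\|_\Y$, to the factored inequality
\[ (a-b)\bigl(\alpha_1 a-\alpha_2 b\bigr)\le 0. \]
Because $\alpha_1 a=d_1$ and $\alpha_2 b=d_2$, this reads $\bigl(\tfrac{d_1}{\alpha_1}-\tfrac{d_2}{\alpha_2}\bigr)(d_1-d_2)\le 0$; combined with $d_1\le d_2$ from statement 2 it forces $\tfrac{d_1}{\alpha_1}\ge\tfrac{d_2}{\alpha_2}$, which is statement 3 (the case $d_1=d_2$ being immediate from $\alpha_1<\alpha_2$).

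The main obstacle I anticipate is statement 3. The zeroth-order minimality inequalities that settle statements 1 and 2 only constrain $d_1^2-d_2^2$ and carry no information about the ratio $d(\alpha)/\alpha$, so one genuinely needs the first-order optimality condition together with monotonicity of $\partial\mathcal{Q}$. The only point requiring care is the algebraic reduction of the monotonicity inequality to the clean factored form $(a-b)(\alpha_1a-\alpha_2b)\le 0$ via Cauchy--Schwarz, after which the conclusion follows by using statement 2.
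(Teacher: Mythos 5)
Your proof is correct. For statements 1 and 2 it coincides with the paper's argument: the paper sets $m=\frac{1}{2}\|g-A\hat{x}_\alpha\|_\Y^2-\frac{1}{2}\|g-A\hat{x}_\beta\|_\Y^2$ and derives $m\leq \alpha(\mathcal{R}(\hat{x}_\beta)-\mathcal{R}(\hat{x}_\alpha))\leq \frac{\alpha}{\beta}m$ from the same two minimality inequalities you use, which is exactly your sandwich in disguise. For statement 3, however, you take a genuinely different route. The paper simply cites \Cref{lem:diff}: the minimal value function $\vartheta_g$ is convex with $\vartheta_g^\prime(\alpha)=-\frac{1}{2\alpha^2}\|g-A\hat{x}_\alpha\|_\Y^2$, so convexity forces this derivative to be non-decreasing and hence $\alpha\mapsto\frac{1}{\alpha}\|g-A\hat{x}_\alpha\|_\Y$ to be non-increasing; that result in turn rests on Str\"omberg's theorem on the $\alpha$-derivative of the Moreau envelope and the Fr\'echet differentiability of $\mathcal{Q}_\alpha$. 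You instead argue at first order: $v_\alpha=\frac{1}{\alpha}(g-A\hat{x}_\alpha)\in\partial\mathcal{Q}(A\hat{x}_\alpha)$ (the same inclusion the paper uses in \Cref{lem:K_nu_props}), monotonicity of $\partial\mathcal{Q}$, and Cauchy--Schwarz yield $\bigl(\frac{d_1}{\alpha_1}-\frac{d_2}{\alpha_2}\bigr)(d_1-d_2)\leq 0$, which combined with statement 2 gives the claim; the algebra checks out, including the degenerate case $d_1=d_2$. Your version is more elementary and self-contained --- it avoids the entire Moreau-envelope differentiability machinery of Section 5 --- whereas the paper's citation is economical because \Cref{lem:diff} is needed anyway for the defect-function analysis and delivers the stronger, exact derivative formula. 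Your opening remark that \Cref{lem:prox_of_Q} makes $\alpha\mapsto\mathcal{R}(\hat{x}_\alpha)$ and $\alpha\mapsto\|g-A\hat{x}_\alpha\|_\Y$ well defined independently of the selection is a point the paper leaves implicit and is worth stating.
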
 
\begin{proof}
To prove $1.2.$ let $\alpha<\beta$. Set $m=\frac{1}{2}\|g-A \xh\|_\Y^2- \frac{1}{2}\|g-A\hat{x}_\beta\|_\Y^2. $ From $T_\alpha(\xh, g)\leq T_\alpha(\hat{x}_\beta,g)$ and $T_\beta ( \hat{x}_\beta,g)\leq T_\beta (\xh,g)$ we obtain  
\[ m \leq \alpha\left(\mathcal{R}(\hat{x}_\beta)-\mathcal{R}(\xh) \right)\leq \frac{\alpha}{\beta} m.\]
Hence $m\leq 0$.
$3.$ follows from \Cref{lem:diff}. 
\end{proof}
\begin{lemma}\label{app:minimal_limit}
Let $x\in \XA$ and $x_\alpha\in R_\alpha(Ax), \alpha>0$ any selection.  Then \[ \lim\nolimits_{\alpha\searrow 0} \left( \frac{1}{2\alpha}\|Ax-Ax_\alpha\|_\Y^2 + \mathcal{R}(x_\alpha) \right) = \inf \{ \mathcal{R}(z) \colon z\in \XA \text{ with } Az=Ax\}.  \]  
\end{lemma}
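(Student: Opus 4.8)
The plan is to recognize the bracketed quantity as a minimal value function and then resolve the limit by convex duality. Writing $\vartheta(\alpha):=\frac{1}{2\alpha}\|Ax-Ax_\alpha\|_\Y^2+\mathcal{R}(x_\alpha)$, by \Cref{def:minimal_val} this equals $\vartheta_{Ax}(\alpha)$ and is independent of the chosen minimizer, while by \eqref{eq:moreau_Q} it is the Moreau envelope $\mathcal{Q}_\alpha(Ax)$ of the push-forward penalty $\mathcal{Q}$ from \Cref{prop:introduceQ}. Extending $\mathcal{R}$ by $+\infty$ outside $\X$ changes neither infimum, so the right-hand side of the claim is exactly $\mathcal{Q}(Ax)=\inf\{\mathcal{R}(z):Az=Ax\}$. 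Thus the statement reduces to the pointwise convergence $\lim_{\alpha\searrow0}\mathcal{Q}_\alpha(Ax)=\mathcal{Q}(Ax)$ of the Moreau envelope, which I would split into two one-sided bounds.

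For the upper bound I would fix any $z$ with $Az=Ax$ and $\mathcal{R}(z)<\infty$; optimality of $x_\alpha$ gives $\vartheta(\alpha)=T_\alpha(x_\alpha,Ax)\le T_\alpha(z,Ax)=\mathcal{R}(z)$, since the data term vanishes. Taking the infimum over such $z$ yields $\vartheta(\alpha)\le\mathcal{Q}(Ax)$ for every $\alpha>0$, and hence $\limsup_{\alpha\searrow0}\vartheta(\alpha)\le\mathcal{Q}(Ax)$.

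The matching lower bound is the crux of the argument, and here I would reuse the conjugate representation established in the proof of \Cref{lem:diff}, namely $\vartheta_{Ax}(\alpha)=\sup_{v\in\Y}\bigl(\langle Ax,v\rangle-\mathcal{Q}^\ast(v)-\frac{\alpha}{2}\|v\|_\Y^2\bigr)$. Since $\vartheta_{Ax}$ is non-increasing (\Cref{lem:diff}), its limit as $\alpha\searrow0$ equals its supremum over $\alpha$, so I may reorder the double supremum and compute the inner one over $\alpha$ by monotonicity of $-\frac{\alpha}{2}\|v\|_\Y^2$: this gives $\lim_{\alpha\searrow0}\vartheta_{Ax}(\alpha)=\sup_{v\in\Y}\bigl(\langle Ax,v\rangle-\mathcal{Q}^\ast(v)\bigr)=\mathcal{Q}^{\ast\ast}(Ax)$. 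As $\mathcal{Q}$ is proper, convex and lower semi-continuous (\Cref{prop:introduceQ}), the biconjugation theorem yields $\mathcal{Q}^{\ast\ast}=\mathcal{Q}$, so the limit is $\mathcal{Q}(Ax)$. Combined with the upper bound this proves the claim, and the chain of equalities covers the degenerate case $\mathcal{Q}(Ax)=\infty$ with no extra work.

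The only point that needs care is the passage $\lim=\sup$ together with the reordering of suprema, but both are elementary once $\vartheta_{Ax}$ is known to be monotone, so no compactness enters. Should one prefer to avoid citing \Cref{lem:diff}, the lower bound can instead be obtained directly: if the limit $L$ is finite then $\|Ax-Ax_\alpha\|_\Y\le\sqrt{2\alpha L}\to0$ and $\mathcal{R}(x_\alpha)\le L$, so the $\tau$-compactness of $\{\mathcal{R}\le L\}$ and lower semi-continuity of $\mathcal{R}$ furnish a cluster point $z^\ast$ of $(x_\alpha)$ with $Az^\ast=Ax$ and $\mathcal{R}(z^\ast)\le L$, whence $\mathcal{Q}(Ax)\le L$. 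This second route trades the duality interchange for a net-convergence argument and is where the real work would lie if the conjugate representation were unavailable.
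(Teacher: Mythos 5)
Your proposal is correct and follows the paper's route: both identify the bracketed quantity with the Moreau envelope $\mathcal{Q}_\alpha(Ax)$ of the push-forward penalty $\mathcal{Q}$ from \Cref{prop:introduceQ} (via \eqref{eq:moreau_Q}) and reduce the claim to the pointwise convergence $\mathcal{Q}_\alpha(Ax)\rightarrow\mathcal{Q}(Ax)$ as $\alpha\searrow 0$. The only difference is that the paper simply cites this envelope convergence from the convex-analysis literature, whereas you reprove it in a self-contained way via the conjugate representation, the interchange of suprema and biconjugation (with a sound compactness-based alternative for the lower bound).
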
 
\begin{proof}
Due to \eqref{eq:moreau_Q} and \cite[Prop. 12.32]{Bauschke} we have  
\[ \frac{1}{2\alpha}\|Ax-Ax_\alpha\|_\Y^2 + \mathcal{R}(x_\alpha)= \mathcal{Q}_\alpha(Ax) \rightarrow  \mathcal{Q}(Ax)  \quad\text{ for } \alpha \searrow 0  \]
with $\mathcal{Q}$ defined in \Cref{prop:introduceQ} and $\mathcal{Q}_\alpha$ its Moreau envelope (see \Cref{sec:diff}).
\end{proof}
\section{Properties of Banach spaces}
\begin{proposition} \label{app:dual}
\begin{remunerate}
\item Let $p\in [1,\infty)$ and $\omega=(\omega_j)_{j\in\Lambda}$ a sequence of positive reals. Let $p^\prime\in (1,\infty]$ with $\frac{1}{p}+\frac{1}{p^\prime}=1$. Then the pairing 
\[ \langle\cdot , \cdot \rangle \colon \lspace {\omega\inv} {p^\prime} \times \lspace \omega p\rightarrow \mathbb{R} \quad\text{given by } \langle \xi, x\rangle=\sum\nolimits_{j\in \Lambda} \xi_j x_j \] 
is well defined and gives rise to an isometric isomorphism $(\lspace \omega p)^\prime\cong \lspace {\omega\inv}{ p^\prime}$.
\item Let $p,q\in [1,\infty)$ and $s\in \mathbb{R}$. Then the pairing 
\[ \langle\cdot , \cdot \rangle \colon \bspace {-s} {p^\prime} {q^\prime} \times \bspace s p q \rightarrow \mathbb{R} \quad\text{given by } \langle \xi, x\rangle=\sum\nolimits_{(j,k)\in \Lambda} \xi_{j,k} x_{j,k} \] 
is well defined and gives rise to an isometric isomorphism $(\bspace spq )^\prime\cong \bspace {-s} {p^\prime} {q^\prime}$. (see \cite[2.11.2 (1)]{Triebel2010})
\end{remunerate}
\end{proposition}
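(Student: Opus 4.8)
The plan is to handle both parts by a weight-absorbing isometric change of variables that reduces each assertion to the classical duality of (mixed-norm) sequence spaces. In both cases the stated pairing is nothing but the natural bilinear pairing transported through this isometry, so the substance is the computation showing that the weights match up exactly and the norms are preserved.

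For part 1, I would introduce the diagonal multiplication operator $D_\omega$ sending $x=(x_j)$ to $(\omega_j x_j)$. Directly from the definition of $\lnorm{\cdot}{\omega}{p}$ this is an isometric isomorphism $D_\omega\colon \lspace{\omega}{p}\rightarrow \ell^p$ onto the unweighted space. Every $f\in(\lspace{\omega}{p})^\prime$ then corresponds to $f\circ D_\omega\inv\in(\ell^p)^\prime$, and the classical isometric duality $(\ell^p)^\prime\cong\ell^{p^\prime}$ (valid for all $p\in[1,\infty)$, with $p^\prime\in(1,\infty]$) supplies $\eta\in\ell^{p^\prime}$ representing $f\circ D_\omega\inv$. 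Putting $\xi_j:=\omega_j\eta_j$ yields $f(x)=\sum_j\xi_j x_j$ together with $\lnorm{\xi}{\omega\inv}{p^\prime}=\snorm{\eta}_{\ell^{p^\prime}}=\snorm{f}$, so the correspondence is onto and isometric. Well-definedness of the pairing and the reverse direction I would obtain from the weighted H\"older inequality $\left|\sum_j\xi_j x_j\right|\le\lnorm{\xi}{\omega\inv}{p^\prime}\lnorm{x}{\omega}{p}$, which follows by writing $\xi_j x_j=(\omega_j\inv\xi_j)(\omega_j x_j)$ and applying unweighted H\"older, with sharpness coming from the usual extremizing sequence.

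For part 2, I would observe that $\bspace spq$ is a weighted mixed-norm space: with the abbreviation $b_j:=2^{j(s+\frac{d}{2}-\frac{d}{p})}$ one has $\bn xspq^q=\sum_j b_j^q\left(\sum_{k\in\Lambda_j}|x_{j,k}|^p\right)^{q/p}$, i.e.\ it is an $\ell^q$ over the levels $j$, weighted by $b_j$, of the inner spaces $\ell^p(\Lambda_j)$. The identity $\frac{d}{2}-\frac{d}{p^\prime}=-(\frac{d}{2}-\frac{d}{p})$ shows that the weight attached to $\bspace{-s}{p^\prime}{q^\prime}$ is exactly $b_j\inv$, so the stated pairing is the natural dual pairing of this structure. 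The duality then factors into two steps: at each fixed level $j$ the index set $\Lambda_j$ is finite (since $|\Lambda_j|\le C_\Lambda 2^{jd}$), so the inner duality $(\ell^p(\Lambda_j))^\prime\cong\ell^{p^\prime}(\Lambda_j)$ is merely finite-dimensional $\ell^p$ duality and holds for every $p\in[1,\infty)$; across the levels one applies the duality of weighted vector-valued $\ell^q$ sequence spaces for $q\in[1,\infty)$, pairing the weight $b_j$ against $b_j\inv$ and the inner values against their duals. Alternatively, and more economically, the identity is precisely \cite[2.11.2 (1)]{Triebel2010}.

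I would not spell out the routine estimates (weighted H\"older and the extremizers). The only point needing care is the endpoint behaviour when $p=1$ or $q=1$, where the dual exponent becomes $\infty$ and the representing functional is described by a supremum rather than a sum. For part 1 this is covered directly by the classical statement $(\ell^1)^\prime\cong\ell^\infty$. For part 2 the potential obstacle is the validity of the vector-valued duality at $q=1$, which in general requires a Radon--Nikodym hypothesis on the inner dual spaces; here this obstruction disappears, because the finiteness of each $\Lambda_j$ makes every inner space $\ell^p(\Lambda_j)$ finite-dimensional, so its dual trivially has the Radon--Nikodym property and the duality holds unconditionally.
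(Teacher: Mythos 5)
Your proposal is correct, but it cannot be compared line by line with the paper, because the paper offers no proof of this proposition at all: part 2 is delegated to the citation \cite[2.11.2 (1)]{Triebel2010} and part 1 is stated as a standard fact (it is implicitly used, e.g., in the proof of \Cref{prop:varrho_1_wei}). Your argument is therefore a genuinely self-contained alternative. The weight-absorbing isometry $D_\omega\colon \lspace{\omega}{p}\to\ell^p$ reduces part 1 to classical unweighted duality, and the bookkeeping $\lnorm{\xi}{\omega\inv}{p^\prime}=\snorm{\eta}_{\ell^{p^\prime}}$ with $\xi_j=\omega_j\eta_j$ is exactly right, including the endpoint $p=1$, $p^\prime=\infty$. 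For part 2 the weight computation $\frac{d}{2}-\frac{d}{p^\prime}=-\left(\frac{d}{2}-\frac{d}{p}\right)$ correctly identifies the dual weight as $b_j\inv$, and the two-step factorization (finite-dimensional $\ell^p(\Lambda_j)$ duality inside, weighted vector-valued $\ell^q$ duality outside) is sound and gives an isometric identification. The one place where you over-complicate is the Radon--Nikodym discussion: the duality $\bigl(\bigoplus_{\ell^q} X_j\bigr)^\prime\cong\bigoplus_{\ell^{q^\prime}} X_j^\prime$ for $1\le q<\infty$ holds unconditionally for $\ell^q$-direct sums of arbitrary Banach spaces (the RNP hypothesis is only needed for Bochner spaces over non-atomic measures), so the appeal to finite-dimensionality of $\ell^p(\Lambda_j)$, while correct and harmless, is not actually needed. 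What your route buys over the paper's citation is transparency and self-containedness at the cost of length; what the citation buys is brevity and an authoritative reference for the mixed-norm case.
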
 

\begin{proposition} \cite[Lem. 8.21.]{Scherzer_etal:09} \label{lem:boundedness_linear_fct_range}
Let $A\colon \X\rightarrow \Y$ be a bounded linear operator between Banach spaces and $\xi\in \X^\prime$. The following statements are equivalent:  
\begin{romannum}
\item There exists a constant $c\geq 0$ such that $\langle \xi , x \rangle \leq c \|Ax \|_\Y$ for all $x\in \X.$
\item There exists $\omega\in \Y^\prime$ with $\|\omega\|_{\Y^\prime}\leq c$ and $A^\ast \omega = \xi.$
\end{romannum}
\end{proposition}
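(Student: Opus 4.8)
The plan is to establish the two implications separately; the content lies entirely in the direction $(i)\Rightarrow(ii)$, which is a norm-preserving Hahn–Banach extension, while $(ii)\Rightarrow(i)$ is a one-line unfolding of the adjoint.

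For $(ii)\Rightarrow(i)$ I would simply use the defining property of the adjoint: if $A^\ast\omega=\xi$ with $\|\omega\|_{\Y^\prime}\leq c$, then for every $x\in\X$ we have $\langle\xi,x\rangle=\langle A^\ast\omega,x\rangle=\langle\omega,Ax\rangle\leq\|\omega\|_{\Y^\prime}\|Ax\|_\Y\leq c\|Ax\|_\Y$, which is exactly $(i)$. For the converse $(i)\Rightarrow(ii)$ I would first apply the bound in $(i)$ to both $x$ and $-x$ to upgrade it to the two-sided estimate $|\langle\xi,x\rangle|\leq c\|Ax\|_\Y$ for all $x\in\X$. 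In particular $\xi$ annihilates $\ker(A)$, so I can define a linear functional $\omega_0$ on the (in general non-closed) subspace $\im(A)\subseteq\Y$ by $\omega_0(Ax):=\langle\xi,x\rangle$. This is well defined independently of the representative, because $Ax_1=Ax_2$ forces $x_1-x_2\in\ker(A)$ and hence $\langle\xi,x_1-x_2\rangle=0$. The two-sided estimate then reads $|\omega_0(Ax)|\leq c\|Ax\|_\Y$, so $\omega_0$ is bounded on $\im(A)$ with norm at most $c$.

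The key and essentially only nontrivial step is the extension: I would invoke the norm-preserving Hahn–Banach theorem, with the sublinear majorant $p(y):=c\|y\|_\Y$ on $\Y$, to extend $\omega_0$ from $\im(A)$ to a functional $\omega\in\Y^\prime$ satisfying $\|\omega\|_{\Y^\prime}\leq c$. It then remains only to check $A^\ast\omega=\xi$, which is immediate: for every $x\in\X$ we have $\langle A^\ast\omega,x\rangle=\langle\omega,Ax\rangle=\omega_0(Ax)=\langle\xi,x\rangle$, so the two elements of $\X^\prime$ coincide. The main obstacle is conceptual rather than technical, namely realising that $\xi$ factors through $\im(A)$ and that Hahn–Banach supplies the extension without any assumption of reflexivity, completeness of $\im(A)$, or closedness of the range — this is precisely why the equivalence holds for arbitrary Banach spaces $\X,\Y$.
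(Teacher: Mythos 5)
Your proof is correct, and it is the standard argument: the paper itself gives no proof of this proposition but cites \cite[Lem.~8.21]{Scherzer_etal:09}, where the same route is taken. Upgrading the one-sided bound to $|\langle \xi,x\rangle|\leq c\|Ax\|_\Y$, factoring $\xi$ through the (possibly non-closed) range to get a functional $\omega_0$ on $\im(A)$ of norm at most $c$, and extending by Hahn--Banach with majorant $p(y)=c\|y\|_\Y$ is exactly the intended proof, so there is nothing to add.
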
 

\section{Index function calculus} ~\\
Let $\Gamma:= \left\{f\colon \mathbb{R}\rightarrow (-\infty,\infty] \colon f \text{ is proper, convex and lower semi-continuous} \right\}$. 
\begin{lemma} \label{prop:app_fenchel}
Suppose $f\in \Gamma$. Then
\begin{remunerate}
\item $f$ is positive with $\dom(f)\subseteq (-\infty,0]$ if and only if $f^\ast|_{[0,\infty)}\leq 0$ . 
\item $f$ is non-decreasing if and only if  $\dom(f^\ast)\subseteq [0,\infty)$.
\end{remunerate}
\end{lemma}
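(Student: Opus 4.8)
The plan is to work directly with the Fenchel conjugate $f^\ast(y)=\sup_{x\in\mathbb{R}}(xy-f(x))$ and to invoke the Fenchel--Moreau biconjugation theorem, which applies precisely because $f\in\Gamma$ is proper, convex and lower semi-continuous and hence satisfies $f=f^{\ast\ast}$, that is $f(x)=\sup_{y}(xy-f^\ast(y))$. This identity is what powers the ``only if'' directions, while the ``if'' directions will follow from elementary estimates on the defining supremum. Throughout I read ``positive'' as $f\geq 0$.

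For the first equivalence, I would first treat the forward implication: assuming $f\geq0$ and $\dom(f)\subseteq(-\infty,0]$, fix $y\geq0$ and observe that the supremum defining $f^\ast(y)$ ranges effectively over $x\in\dom(f)\subseteq(-\infty,0]$, where $xy\leq0$ and $-f(x)\leq0$, so every summand is nonpositive and $f^\ast(y)\leq0$. For the converse, assuming $f^\ast|_{[0,\infty)}\leq0$, I would write $f=f^{\ast\ast}$ and read off two facts from the representation $f(x)=\sup_{y}(xy-f^\ast(y))$: evaluating at $y=0$ gives $f(x)\geq -f^\ast(0)\geq0$, so $f\geq0$; and for any $x>0$, restricting the same supremum to $y\geq0$ gives $f(x)\geq\sup_{y\geq0}(xy-f^\ast(y))\geq\sup_{y\geq0}xy=\infty$, so $x\notin\dom(f)$, i.e.\ $\dom(f)\subseteq(-\infty,0]$.

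For the second equivalence, the forward direction combines properness and monotonicity: I would pick $x_0\in\dom(f)$ and note that for $y<0$ and $x\leq x_0$ monotonicity gives $f(x)\leq f(x_0)$, whence $xy-f(x)\geq xy-f(x_0)\to+\infty$ as $x\to-\infty$; thus $f^\ast(y)=\infty$ and $\dom(f^\ast)\subseteq[0,\infty)$. Conversely, if $\dom(f^\ast)\subseteq[0,\infty)$, then biconjugation yields $f(x)=\sup_{y\geq0}(xy-f^\ast(y))$, which is a pointwise supremum of affine functions $x\mapsto xy-f^\ast(y)$ that are each non-decreasing in $x$ because $y\geq0$; a supremum of non-decreasing functions is non-decreasing, so $f$ is non-decreasing.

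There is no genuinely hard step here; the substance lies entirely in the two ``only if'' directions, while the ``if'' directions are immediate from the definition of the supremum. The only points requiring care are the use of $f=f^{\ast\ast}$, which is legitimate only because of the standing assumptions defining $\Gamma$, and the appeal to properness in the forward direction of the second part to secure a finite value $f(x_0)$ before letting $x\to-\infty$.
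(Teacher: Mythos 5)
Your proposal is correct and takes essentially the same route as the paper: both directions of each part rest on the Fenchel--Moreau identity $f=f^{\ast\ast}$ (legitimate since $f\in\Gamma$), and your argument for the second part coincides with the paper's almost verbatim, including the use of properness to pick $x_0\in\dom(f)$ before sending $x\to-\infty$. The only difference is in part 1, where the paper compresses your two direct estimates into the single observation that the two conditions read $\chi_{(-\infty,0]}\leq f$ and $f^\ast\leq\chi_{[0,\infty)}$ and that these indicator functions are conjugate to one another under the order-reversing conjugation; your computation simply unpacks that duality by hand.
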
 
\begin{proof}
\begin{remunerate}
\item $f$ is positive with $\dom(f)\subseteq (-\infty,0]$ if and only if $\chi_{(-\infty,0]}\leq f$. $f^\ast|_{[0,\infty)}\leq 0$ if and only if $f^\ast\leq \chi_{[0,\infty)}$.  Hence the claim follows from $\chi_{[0,\infty)}^\ast=\chi_{(-\infty,0]}$.
\item Suppose $f$ is non-decreasing and let $t<0$. Let $\beta_0\in \dom(f)$. Then \[ \beta t - f(\beta) \geq \beta t - f(\beta_0) \quad\text{for all }  \beta\leq \beta_0.\] As $\beta t - f^\ast(\beta_0)\longrightarrow \infty$  for $\beta\rightarrow -\infty$ this shows $f^\ast(t)=\sup_{\beta\in\mathbb{R}}\beta t - f(\beta)=\infty$. Hence $\dom(f^\ast)\subseteq [0,\infty).$\\ Vice versa assume $\dom(f^\ast)\subseteq [0,\infty)$. Then $f(\beta)=\sup_{t\geq 0} t\beta  - f^\ast(t)$ is non-decreasing as a supremum over non-decreasing functions.    
\end{remunerate}
\end{proof}
\begin{lemma} \label{prop:app_F}
The map $\mathcal{F}$ defined in \eqref{eq:defF} is well-defined, order preserving and bijective. The expression \eqref{eq:expression_for_F_inv} holds true.
\end{lemma}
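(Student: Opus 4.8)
The plan is to recognise $\mathcal{F}$ as a Fenchel conjugation dressed up with the order-reversing substitution $\alpha\mapsto 1/\alpha$ and a reflection, and then to read off all four assertions from the biconjugation theorem together with \Cref{prop:app_fenchel}. Concretely, to each $\sigma\in\Sigma$ I would associate the function $g_\sigma$ given by $g_\sigma(\beta)=\sigma(1/\beta)$ for $\beta>0$ and $g_\sigma(\beta)=+\infty$ for $\beta\le 0$; by the very definition of $\Sigma$ this lies in $\Gamma$. Substituting $\beta=1/\alpha$ in \eqref{eq:defF} gives
\[ (\mathcal{F}(\sigma))(t)=\inf\nolimits_{\beta>0}\Bigl(g_\sigma(\beta)+\tfrac{t}{2}\beta\Bigr)=-g_\sigma^\ast\Bigl(-\tfrac{t}{2}\Bigr), \]
so that $\mathcal{F}$ is, up to the affine substitution $s=-t/2$ and a global sign, the conjugation $g_\sigma\mapsto g_\sigma^\ast$. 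Well-definedness is then immediate from this formula: $g_\sigma^\ast$ is convex and lower semi-continuous, hence $t\mapsto -g_\sigma^\ast(-t/2)$ is concave and upper semi-continuous; nonnegativity and finiteness on $[0,\infty)$ follow since each summand $\sigma(\alpha)+\frac{t}{2\alpha}$ is nonnegative while properness of $\sigma$ furnishes a finite upper bound for the infimum. Order preservation is trivial, as the infimand in \eqref{eq:defF} is monotone in $\sigma$.

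For bijectivity I would exhibit the candidate inverse $\mathcal{G}\colon\Phi\to\Sigma$ of \eqref{eq:expression_for_F_inv} and mirror the construction on the $\Phi$-side: to $\phi\in\Phi$ I associate $h_\phi\in\Gamma$ with $h_\phi(t)=-\phi(t)$ for $t\ge 0$ and $h_\phi(t)=+\infty$ otherwise, so that $(\mathcal{G}(\phi))(\alpha)=h_\phi^\ast(-1/(2\alpha))$. Here I would use that a concave $\phi\ge 0$ on $[0,\infty)$ is automatically non-decreasing (a negative slope would, by concavity, persist and force $\phi$ below $0$), whence $h_\phi$ is non-increasing; this shows both $\mathcal{G}(\phi)\in\Sigma$ and, via the reflection $f\mapsto f(-\cdot)$ of \Cref{prop:app_fenchel}.2, that $\dom(h_\phi^\ast)\subseteq(-\infty,0]$. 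The two composition identities then collapse to biconjugation: inserting the substitutions above turns $\mathcal{G}(\mathcal{F}(\sigma))(\alpha)$ into $g_\sigma^{\ast\ast}(1/\alpha)=g_\sigma(1/\alpha)=\sigma(\alpha)$, and $\mathcal{F}(\mathcal{G}(\phi))(t)$ into $-h_\phi^{\ast\ast}(t)=-h_\phi(t)=\phi(t)$. Together these give $\mathcal{F}^{-1}=\mathcal{G}$ and hence both bijectivity and the formula \eqref{eq:expression_for_F_inv}, while order preservation of $\mathcal{F}^{-1}$ is inherited from that of $\mathcal{F}$.

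The main obstacle will be the bookkeeping at the boundary point $s=0$, equivalently at $\alpha\to\infty$: the suprema arising in these two computations range over the \emph{open} half-line, whereas biconjugation supplies a supremum over all of $\mathbb{R}$. I would close this gap by observing that $g_\sigma$ being non-increasing (because $\sigma$ is non-decreasing) forces $g_\sigma^\ast=+\infty$ on $(0,\infty)$, and dually $h_\phi^\ast=+\infty$ on $(0,\infty)$, so only the closed half-line $(-\infty,0]$ ever contributes; the single endpoint $s=0$ may then be included or deleted without changing the value, since a closed convex function agrees with its limit along any segment reaching into its domain. These "monotonicity of the conjugate'' statements are precisely the reflected forms of \Cref{prop:app_fenchel}, which is exactly why that lemma is isolated beforehand.
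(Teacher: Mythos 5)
Your overall strategy coincides with the paper's: transplant $\mathcal{F}$ onto the Fenchel conjugation via the substitutions $\beta\mapsto 1/\alpha$ and $s=-t/2$, and read off well-definedness, order preservation and bijectivity from biconjugation together with \Cref{prop:app_fenchel}. The one step that fails as written is the claim that $g_\sigma$, defined by $g_\sigma(\beta)=\sigma(1/\beta)$ for $\beta>0$ and $g_\sigma(\beta)=+\infty$ for $\beta\le 0$, lies in $\Gamma$ ``by the very definition of $\Sigma$''. It need not: lower semi-continuity fails at $\beta=0$ whenever $\lim_{\alpha\to\infty}\sigma(\alpha)<\infty$ (take $\sigma\equiv 1$, so that $\liminf_{\beta\searrow 0}g_\sigma(\beta)=1<\infty=g_\sigma(0)$). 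Consequently $g_\sigma^{\ast\ast}$ equals the lower semi-continuous hull of $g_\sigma$, which may differ from $g_\sigma$ at $\beta=0$, and your appeal to biconjugation in the computation of $\mathcal{G}(\mathcal{F}(\sigma))$ is not justified as stated. Your closing discussion of ``bookkeeping at the boundary point $s=0$'' addresses a different issue, namely whether the endpoint $s=0$ may be adjoined to the \emph{outer} supremum; that part is indeed harmless because the conjugates $g_\sigma^\ast$ and $h_\phi^\ast$ are automatically closed. It does not repair the non-closedness of $g_\sigma$ itself.

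The gap is local and reparable in either of two ways. The paper's device is to set the value at the troublesome point to $\lim_{\alpha\to\infty}\sigma(\alpha)$ rather than $+\infty$ (its function $f_\sigma$), after which one verifies $f_\sigma(0)=\liminf_{\beta\to 0}f_\sigma(\beta)$, so that $f_\sigma$ is genuinely closed and biconjugation applies verbatim. Alternatively, within your setup one can note that $g_\sigma\ge 0$ is proper and convex, hence $g_\sigma^{\ast\ast}$ is its l.s.c.\ hull and agrees with $g_\sigma$ at every point of $(0,\infty)$, where $g_\sigma$ coincides with the l.s.c.\ function $\sigma(1/\cdot)$; since $\mathcal{G}(\mathcal{F}(\sigma))(\alpha)=g_\sigma^{\ast\ast}(1/\alpha)$ is only ever evaluated at $1/\alpha>0$, the identity $\mathcal{G}\circ\mathcal{F}=\mathrm{id}_\Sigma$ survives. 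The rest of your argument --- well-definedness of $\mathcal{F}$, order preservation, the observation that a nonnegative concave $\phi$ is non-decreasing so that $h_\phi\in\Gamma$, and the direction $\mathcal{F}\circ\mathcal{G}=\mathrm{id}_\Phi$ --- is sound and mirrors the paper's decomposition $\mathcal{F}=\mathcal{G}_2\circ\,^\ast\circ\mathcal{G}_1$.
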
 
\begin{proof}
We define the following sets
\begin{align*} 
\Gamma_1 &= \left\{ f\in \Gamma \colon f  \text{ is non-decreasing with } \dom(f)\subseteq (-\infty,0] \right\} \\  
\Gamma_2 &= \left\{ f\in \Gamma \colon \dom(f)\subseteq [0,\infty) \text{ and } f|_{[0,\infty)} \leq 0 \right\}
\end{align*}
By \Cref{prop:app_fenchel} the Fenchel conjugation $ ^\ast\colon \Gamma_1 \rightarrow \Gamma_2$ is an order reversing bijection and its inverse is given by the Fenchel conjugation $ ^\ast\colon \Gamma_2 \rightarrow \Gamma_1$. We will construct bijections $\mathcal{G}_1 \colon \Sigma\rightarrow \Gamma_1$ and $\mathcal{G}_2 \colon \Gamma_2 \rightarrow \Phi$, such that $\mathcal{F}=\mathcal{G}_2 \circ \, ^\ast \circ \mathcal{G}_1$. \\
Let $\sigma\in \Sigma.$ Then we define
\begin{align*}
 f_\sigma \colon \mathbb{R}\rightarrow [0,\infty]\text{ by } f_\sigma(\beta)=\begin{cases} 
\sigma\left( -\frac{1}{2\beta}\right) &\text{if } \beta < 0 \\ 
\lim_{\alpha\rightarrow \infty} \sigma\left( \alpha \right) &\text{if } \beta=0 \\ 
\infty &\text{if } \beta > 0  
\end{cases}.
\end{align*}
Then $f_\sigma$ is proper, non-decreasing and $\dom(f_\sigma)\subset (-\infty, 0].$
Convexity \neww{and lower semi-continuity}  of $\sigma(\frac{1}{\cdot})$ yields convexity \neww{and lower semi-continuity} of $f_\sigma$ on $(-\infty,0)$. We have 
\[ f_\sigma(0)=\lim_{\beta \nearrow 0} \sigma\left(-\frac{1}{2\beta}\right) = \liminf_{\beta\rightarrow 0} f_\sigma(\beta).\] 
Hence $f_\sigma$ is convex and lower semi-continuous.\\ 
It is easy to see that $\mathcal{G}_1 \colon \Sigma\rightarrow \Gamma_1$ given by $\sigma\mapsto f_\sigma$ is a order preserving bijection. Its inverse is given by $(\mathcal{G}_1\inv (f))(\alpha)= f\left(-\frac{1}{2\alpha}\right).$\\ 
Moreover, the map $\Gamma_2\rightarrow \Phi$ given by $g\mapsto -(g|_{[0,\infty)})$ is well defined, bijective and order reversing. Its inverse is given by $\phi\mapsto g_\phi$ with   
\begin{align*}
 g_\phi \colon \mathbb{R}\rightarrow (-\infty,\infty]\text{ given by } g_\phi(t)=\begin{cases} 
-\phi\left( t \right) &\text{if } t \geq  0 \\ 
\infty &\text{if } t < 0  
\end{cases}.
\end{align*}
If $\sigma\in\Sigma$ and $t\geq 0$
then 
\[ \lim\nolimits_{\beta \nearrow 0 }\beta t - f_\sigma(\beta) =- \lim\nolimits_{\beta\nearrow 0 }f_\sigma(\beta) = -f_\sigma(0). \] 
Hence 
\begin{align*}
(\mathcal{F}(\sigma))(t)& =\inf\nolimits_{\alpha>0} \left(\sigma(\alpha)+\frac{1}{2\alpha}t \right) \\ 
& = \inf\nolimits_{\beta<0} f_\sigma(\beta)- \beta t \\
& = \inf\nolimits_{\beta\leq 0} f_\sigma(\beta)- \beta t \\
& = - f_\sigma^\ast(t)=   ((\mathcal{G}_2 \circ \, ^\ast \circ \mathcal{G}_1)(\sigma))(t)
\end{align*}
This shows $\mathcal{F}=\mathcal{G}_2 \circ \, ^\ast \circ \mathcal{G}_1$. Therefore $\mathcal{F}$ is an order preserving bijection. It remains to compute $\mathcal{F}\inv= \mathcal{G}_1\inv \circ  \, ^\ast \circ \mathcal{G}_2\inv$. If $\phi\in \Phi$ and $\alpha>0$, then 
\[ (\mathcal{F}\inv(\phi))(\alpha)= g_\phi^\ast\left(-\frac{1}{2\alpha}\right)= \sup\nolimits_{t\geq 0} \left( -g_\phi(t) - \frac{1}{2\alpha}t \right)= \sup\nolimits_{t\geq 0} \left( \phi(t) - \frac{1}{2\alpha}t \right). \]
\end{proof} 
\section*{Acknowledgments}
I would like to thank Thorsten Hohage and Benjamin Sprung for fruitful discussions, Matthew Tam and Russell Luke for their support concerning convex analysis topics and Thomas Str\"omberg for \Cref{prop:stomberg}.\\
Financial support by \neww{Deutsche Forschungsgemeinschaft (DFG, German Research \linebreak Foundation) through grant RTG 2088 - B01 }is gratefully acknowledged.
\bibliographystyle{siamplain}
\bibliography{lit}
\end{document}